\renewcommand{\thefootnote}{\fnsymbol}
\newtheorem{theorem}{Theorem}[section]
\newtheorem{corollary}{Corollary}[section]
\newtheorem{lemma}{Lemma}[section]
\newtheorem{definition}{Definition}[section]
\newtheorem{remark}{Remark}[section]
\newtheorem{proposition}{Proposition}[section]
\date{}
\begin{document}

\title{Structure of some mapping spaces  %
\footnotemark[0]}

\author{Liangzhao Zhang \footnotemark[0]\qquad  Xiangyu Zhou \footnotemark[0]\qquad}
\renewcommand{\thefootnote}{\fnsymbol{footnote}}
\footnotetext[0]{email addresses:zhangliangzhao@amss.ac.cn(Liangzhao Zhang),xyzhou@math.ac.cn(Xiangyu Zhou)}
\maketitle

\begin{abstract}
	We prove that the path space of a differentiable manifold is diffeomorphic to a Fréchet space, endowing the path space with a linear structure. Furthermore, the base point preserving mapping space consisting of maps from a cube to a differentiable manifold is also diffeomorphic to a Fréchet space. As a corollary of a more general theorem, we establish that the path fibration becomes a fibre bundle for manifolds M. Additionally, we discuss the mapping space from a compact topological space to a differentiable manifold, demonstrating that this space admits the structure of a smooth Banach manifold.
\end{abstract}

{\bf Keywords}: path space, mapping space, infinite dimensional manifold, ordinary differential equations, fibre bundle

\subsection{ Research Contributions and Results}

Path spaces, defined as collections of differentiable paths on a manifold, play an important role in understanding the geometric and topological properties of manifolds.

In this article, we establish several fundamental results regarding the structure of path spaces and mapping spaces on manifolds.

1. Path Spaces as Banach Spaces: We prove that for any manifold $ M $, the path space $ C^k_p(I, M)$, consisting of all $ C^k $ paths starting at $ p \in M $, is diffeomorphic to the Banach space $ C^{k-1}(I, T_p M) $. The diffeomorphism is explicitly defined in Chapter 1, and the proof is presented in Chapter 3. This result endows the path space with a linear structure, which may be useful in some related fields.

2. Higher-Dimensional Mapping Spaces: We extend this result to higher-dimensional mapping spaces, showing that $ C^\infty_p([-1, 1]^n, M) $ is diffeomorphic to $ \oplus_{i=1}^n C^\infty([-1, 1]^i, T_p M) $. 
Furthermore, for mixed regularity conditions, we prove that
 $$ C^{(k_1, \dots, k_n)}_p([-1, 1]^n, M) \simeq\oplus_{i=1}^n C^{(k_1, \dots, k_i-1)}([-1, 1]^i, T_p M) $$ for $ k_i \geq 1 $. These results generalize the linear structure of path spaces to some mapping spaces.
As a corollary, we show that the compact-open topology of $ C^\infty_p(\mathbb{R}^n, M) $ is homeomorphic to the Fréchet space $ \oplus_{i=1}^n C^\infty(\mathbb{R}^i, T_p M) $. 

3. Banach and Fréchet Manifold Theory: In Chapter 2, we develop the theory of Banach (or Fréchet) manifolds consisting of $ C^{(k_1, \dots, k_n)} $ maps. 

4. Weierstrass Approximation Theorem for curves: In Chapter 3, we introduce the concept of polynomial-like curves and prove a Weierstrass approximation theorem, demonstrating that polynomial-like curves are dense in $ C^k(I, M) $.  

5. Fiber Bundle Structures: In Chapter 4, we prove that for any cofibration $ i: A \hookrightarrow X $ between topological spaces, if $ A $ is compact and $ M $ is a manifold, then the map $ i^*: C(X, M) \to C(A, M) $ is a fiber bundle. We also establish a smooth version of this result. As a corollary, we show that the path fibration $ \pi: C^k_p(I, M) \to M $, defined by $ \gamma \mapsto \gamma(0) $, is a smooth fiber bundle when $ M $ is a manifold. This implies that for any manifold $ M $, there exists a fiber bundle $ E $ on $ M $ which diffeomorphic to the Banach space $ C^k(I, \mathbb{R}^{\dim M}) $.

6. Shape Spaces as Vector Bundles: We further prove that the shape space $ \text{Imm}_p^k(\mathbb{R}, M) / \text{Diff}_+(\mathbb{R}) $ is an infinite-rank vector bundle on $ S^{\dim M - 1} $. 

7. Banach Manifold Structure for Continuous Mappings: In Chapter 5, we demonstrate that the space of continuous mappings $ C(A, M) $ is a Banach manifold when $ A $ is a compact topological space and $ M $ is a manifold. Additionally, if $ M $ is a complex manifold admitting a holomorphic connection, then $ C(A, M) $ is a complex Banach manifold for any compact topological space $ A $.

\section{The diffeomorphism $P$}
Let $M$ be a manifold and choose a complete Riemann metric $g$ on it. Suppose  $\nabla$ is a connection compatible with the metric $g$, and it doesn't have to be the Levi-Civita connection. 
$P^{a\rightarrow b}_{\gamma}$ be the parallel transport along a curve $\gamma$ from $\gamma(a)$ to $ \gamma (b)$, and $k\geq 1$, then we define 
\begin{equation}
	\begin{array}{ccc}
		P:C_{p}^{k}(I,M) & \longrightarrow & C^{k-1}(I,T_{p}M)\\
		t\mapsto\text{\ensuremath{\gamma(t)}} & \longmapsto & t\mapsto P_{\gamma}^{t\rightarrow0}(\dot{\gamma}(t))
	\end{array}
\end{equation}
Here $C_{p}^{k}(I,M)$ is the space of all $C^k$ curves starting at $p\in M$, and $I$ is the interval $[0,1]$.

First we show that P is an bijection, by constructing $P^{-1}$ as follows.

We derive the equation that $P^{-1}(v)$ satisfies.
Choose a basis $\{e_i\}$ of $T_pM$, for any $\gamma\in C_{p}^{k}(I,M) $ ,let $\{e_i\}$ parallel transport along $\gamma$. Then we get a frame $\{e_i(t)\}$ which is parallel along $\gamma$. 
If $v(t)=P^{t\rightarrow0}_\gamma(\dot{\gamma}(t))$, then$\dot{\gamma}(t)=P^{0\rightarrow t}_\gamma(v(t))$. For
$v(t)=v^i(t)e_i$, we have
\begin{equation}\label{gamma and v(t)}
	\dot{\gamma}(t)=P^{0\rightarrow t}_\gamma(v(t))=v^i(t)e_i(t)
\end{equation}
So we have the equation about the curve $\gamma$ and the frame $f_i$ along $\gamma$:
\begin{equation}\label{global equation for P^-1}
	\begin{cases}
		\nabla_{\dot{\gamma}}f_{i}(t) =0\\
		\dot{\gamma}(t) =v^{i}(t)f_{i}(t)\\
		\gamma(0)=p,\   f_{i}(0)=e_{i}(0)
	\end{cases}
\end{equation}
Choose a coordinate chart $(U_0,\varphi_0)$ near $\gamma(0)=p$.

Assume $\varphi_{0}(\gamma(0))=0$, and $\{\frac{\partial}{\partial x_i}\}$ is the natural frame. Let
$$\dot{ \gamma}(t)=r^i(t)\frac{\partial}{\partial x_i}(\gamma(t)),\ e_i(t)=e_i^j(t)\frac{\partial}{\partial x_j}(\gamma(t))$$
So in local coordinate, the equation $\nabla_{\dot{\gamma}}e_i(t)=0$ can be written as:
\begin{equation}
	\begin{split}
		0&=\nabla_{\dot{\gamma}}e_i(t)=\nabla_{\dot{\gamma}}e_i^j(t)\frac{\partial}{\partial x_j}\\
		&=\frac{de_{i}^{l}(t)}{dt}\frac{\partial}{\partial x_{l}}+e_{i}^{j}(t)\varGamma_{kj}^{l}(\gamma(t))r^{k}(t)\frac{\partial}{\partial x_{l}}
	\end{split}
\end{equation}
so we have 
\begin{equation}\label{parallel frame}
	\frac{de_{i}^{l}(t)}{dt}=-e_{i}^{j}(t)\varGamma_{kj}^{l}(\gamma(t))r^{k}(t)	
\end{equation}
for all $i,l\in \{1,2,\dots,m\}$.

If we combine (\ref{parallel frame}) with (\ref{gamma and v(t)}) ,we get an ordinary differential equation system which has $m^2+m$ variables: 
\begin{equation}\label{local equation}
	\begin{cases}
		\dfrac{de_{i}^{l}(t)}{dt}=-e_{i}^{j}(t)\varGamma_{kj}^{l}(\gamma(t))r^{k}(t) \\
		r^{k}(t)=v^{i}(t)e_{i}^{k}(t)\\
		$initial value:$\varphi _0(\gamma(0))=0,e_i^j(0)=e_i^j
	\end{cases}
\end{equation}
For $\Gamma^k_{ij}$ is $C^{\infty}$, $v(t)$ is continuous, we know that the equation satisfies the local Lipschitz condition, and there exists a unique solution locally.

\begin{proposition}\label{The equation has the following 3 properties}
	The equation has the following 3 properties:

	(1).$\gamma(t)$ is independent of the choice of basis $\{e_i\}$ in $T_p M$
	
	(2).$\gamma(t)$ is independent of the choice of chart $(U_0,\varphi_0)$ around $\gamma(0)$
	
	(3).If $(M,g)$ is complete, and $v(t)$ is defined on $\mathbb{R}$, then the equation always has a global solution $\gamma :\mathbb{R}\longrightarrow(M,g)$.
\end{proposition}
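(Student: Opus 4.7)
The strategy is to prove all three parts by working with the intrinsic system (\ref{global equation for P^-1}), of which the local ODE (\ref{local equation}) is merely the coordinate expression in the chart $(U_0,\varphi_0)$ with respect to the basis $\{e_i\}$. Local existence and uniqueness on the coordinate side, established in the excerpt via local Lipschitz continuity, will then transfer to the intrinsic equation.

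For (1), suppose $\{\tilde e_i\}$ is another basis with $\tilde e_i = B_i^j e_j$. Since parallel transport is $\mathbb{R}$-linear, the parallel frame generated by the new basis is $\tilde f_i(t) = B_i^j f_j(t)$, while the coefficients of the fixed vector $v(t)\in T_pM$ transform inversely as $\tilde v^i(t) = (B^{-1})^i_j v^j(t)$. A direct substitution gives $\tilde v^i(t)\tilde f_i(t) = v^j(t) f_j(t)$, so the right-hand side of $\dot\gamma = v^i f_i$ is unchanged, the parallel condition $\nabla_{\dot\gamma}\tilde f_i=0$ holds automatically, and the initial data are matched; hence the intrinsic equation, and thus its unique solution $\gamma$, is invariant under the change of basis. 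For (2), given a second chart $(U_0',\varphi_0')$ around $p$, the corresponding local ODE is again the coordinate expression of the same intrinsic system. Both solutions therefore satisfy (\ref{global equation for P^-1}); reading one of them in the coordinates of the other chart shows it satisfies that chart's local ODE, so local uniqueness forces them to coincide on the overlap, giving a well-defined curve $\gamma$ on $M$.

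For (3), I would exploit that $\nabla$ is compatible with $g$, so parallel transport is an isometry of tangent spaces. This gives the key a priori speed bound $|\dot\gamma(t)|_{g(\gamma(t))} = |P_\gamma^{0\to t}(v(t))|_{g(\gamma(t))} = |v(t)|_{g(p)}$ wherever $\gamma$ is defined. Let $(\alpha,\beta)$ be the maximal interval of existence and suppose $\beta<\infty$. The length of $\gamma|_{[0,\beta)}$ is bounded by $\int_0^\beta |v(t)|_{g(p)}\,dt<\infty$, so $\gamma$ is Cauchy in the Riemannian distance and, by completeness of $(M,g)$, extends to a limit $q = \lim_{t\to\beta^-}\gamma(t)\in M$. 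Because parallel transport preserves inner products, the frame $f_i(t)$ stays bounded and non-degenerate in any chart around $q$ and extends by continuity to a basis $f_i(\beta)$ of $T_qM$. Restarting the local ODE in a chart around $q$ with initial data $(\gamma(\beta),f_i(\beta))$ at $t=\beta$ produces an extension past $\beta$, contradicting maximality. A symmetric argument rules out $\alpha>-\infty$, yielding a solution on all of $\mathbb{R}$.

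The main obstacle, I expect, is the extension step in (3): one needs to know that the parallel frame $f_i(t)$, which a priori depends on the full curve $\gamma|_{[0,t]}$, genuinely has a limit in $T_qM$ that is still a basis. The isometry property of parallel transport combined with the linearity of the frame ODE in any local chart around $q$ is what makes this work, but some care is required to patch charts near the limit point. Once this continuation is in place, the contradiction to maximality is routine, and parts (1) and (2) reduce to bookkeeping with change-of-basis and change-of-chart formulas.
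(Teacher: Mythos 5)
Your proposal is correct and follows essentially the same route as the paper: parts (1) and (2) via uniqueness of the ODE solution together with invariance of the intrinsic system $\dot\gamma = v^i f_i$, $\nabla_{\dot\gamma}f_i = 0$ under linear change of basis and change of chart, and part (3) via the a priori bound $\|\dot\gamma(t)\| = \|v(t)\|$ from metric compatibility, completeness giving a limit point $q$, boundedness of the parallel frame and its derivative allowing the frame to extend to $t=\beta$, and a continuation argument contradicting maximality. The chart-patching subtlety you flag near the limit point is present in the paper's argument as well and is handled there in the same spirit.
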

\begin{proof}

First we prove 1. Assume there is another basis $\{ \tilde{e}_i(0)\}$ of $T_p M$

Because $v(t)=v^i(t)e_i(0)=\tilde{v}^{i}(t)\tilde{e}_{i}(0)$, so  $\exists\left[a_{i}^{j}\right]\in GL(m,\mathbb{R}),$ such that$\tilde{v}^{i}(t)=v^{j}(t)a_{j}^{i},a_{i}^{j}\tilde{e}_{j}^{k}(t)=e_{i}^{k}(t)$.
Now we can show that the solution $\gamma (t)$ to these two equation are equal.
\begin{equation}
	\begin{cases}
		\dfrac{df_{i}^{l}(t)}{dt}=-f_{i}^{j}(t)\varGamma_{kj}^{l}(\gamma(t))r^{k}(t)\\
		r^{k}(t)=v^{i}(t)f_{i}^{k}(0)\\
		\varphi_{0}(\gamma(0))=0,f_{i}^{j}(0)=e_{i}^{j}(0)
	\end{cases}
\end{equation}

\begin{equation}\label{equation for f}
	\begin{cases}
		\dfrac{d\tilde{f}_{i}^{l}(t)}{dt}=-\tilde{f}_{i}^{j}(t)\varGamma_{kj}^{l}(\tilde{\gamma}(t))\tilde{r}^{k}(t)\\
		\tilde{r}^{k}(t)=\tilde{v}^{i}(t)\tilde{f}_{i}^{k}(0)\\
		\varphi_{0}(\tilde{\gamma}(0))=0,\tilde{f}_{i}^{j}(0)=\tilde{e}_{i}^{j}(0)
	\end{cases}
\end{equation}
Multiply both sides of (\ref{equation for f}) by $\left[a_{i}^{j}\right]$, and use $\tilde{v}^{i}(t)=v^{j}(t)a_{j}^{i}$, 
Then we have:
\begin{equation}
	\begin{cases}
		\dfrac{d\left(a_{i}^{s}\tilde{f}_{s}^{l}(t)\right)}{dt}=-a_{i}^{s}\tilde{f}_{s}^{j}(t)\varGamma_{kj}^{l}(\tilde{\gamma}(t))r^{k}(t)\\
		\tilde{r}^{k}(t)=v^{j}(t)a_{j}^{i}\tilde{f}_{i}^{k}(0)\\
		\varphi_{0}(\tilde{\gamma}(0))=0,a_{i}^{s}\tilde{f}_{s}^{j}(0)=a_{i}^{s}\tilde{e}_{s}^{j}(0)=e_{i}^{j}(0)
	\end{cases}
\end{equation}
So $(a_{i}^{s}\tilde{f}_{s}^{j},\tilde{\gamma}(t))$ and $(f_{i}^{j},\gamma(t))$ satisfy the same initial value problem, hence
$\tilde{\gamma}(t)=\gamma(t)$, and $\gamma(t)$ is independent of the choice of basis $e_i(0)$.

Next we prove 2.

For the equation (\ref{local equation}) can be write globally as in \ref{global equation for P^-1}, It's obvious that the solution does not depend on the choice of local coordinates.

Finally, we prove 3.

It's obvious that the maximal existence interval of $\gamma$ can't be closed interval. Because if $\gamma$ can be defined on $[0,T]$, then we can use initial value $\gamma(T),e_i^j(T)$,  to get a solution on $[T,T+\epsilon]$, so the maximal existence interval of $\gamma$ can't be closed interval.

Suppose the maximal existence interval of $\gamma$ is $[0,T),T< +\infty$. For
$P_{\gamma }^{t\rightarrow 0}(\dot{\gamma}(t))=v(t)$ is bounded on $[0,T]$, and the connection is compatible with metric, we have $||\dot{\gamma}(t)||$ is bounded on $[0,T]$. So $\gamma(t)$ has at most one limit point when $t\rightarrow T$ and the sequence ${\gamma(T-\frac{1}{n})}$ is a Cauchy sequence. Then by the completeness of $(M,g)$, we know that ${\gamma(T-\frac{1}{n})}$ has a limit point,so $\exists q\in M,\ \underset{t\rightarrow T}{\lim}\gamma(t)=q$ . 

In (\ref{parallel frame}), $\Gamma_{kj}^l(\gamma(t))$ and $r^k(t)$ are continuous on $[0,T]$, hence are bounded. For the connection is compatible with metric, $||e_i(t)||$ is constant, so $e^j_i(t)$ is bounded on $[0,T)$. Then by (\ref{parallel frame}), $\frac{de_{i}^{l}(t)}{dt}$ is bounded on $[0,T)$. By the completeness of $TM$ and a similar discussion as above, we know that $\underset{t\rightarrow T}{\lim}e_{i}(t)=e_{i}(T)$ exists. 
So we can use $q, e_i(T)$ as initial value to get a solution on $[T,T+\varepsilon)$, contradict with the maximality of $[0,T)$. So $\gamma $ can be defined on $[0,+\infty)$. And the equation $P^{t\rightarrow 0}_{\alpha}\left( \frac{\partial}{\partial t}(\alpha(-t))\right) =-v(-t)$ has a solution $\alpha(-t)$ defined for $t\in [0,+\infty)$. Then we let $\gamma(t)=\alpha(t)$ for $v\leq0 $, and we have an global solution $\gamma :\mathbb{R}\longrightarrow(M,g)$.
\end{proof}
So $P^{-1}$ is well defined, and $P$ is a bijection.  We will prove $P$ is a diffeomorphism in chapter 3.

\section{Preliminaries}
	\subsection{Ordinary differential equations and parallel transportation}
	\begin{definition}
		Let $U $ be an open subset of $[0,\infty)^{n-b}\times \mathbb{R}^b\subseteq \mathbb{R}^n$, 
		then we define 
		$$C^{(k_1,\cdots,k_n)}(U,V):=\{f:U\longrightarrow V\mid \forall m_{i}\leq k_{i},\dfrac{\partial^{m_{1}}}{\partial x_{1}^{m_{1}}}\cdots\dfrac{\partial^{m_{n}}}{\partial x_{n}^{m_{n}}}f(x_{1,}\cdots x_{n}) \ is \ continuous\} $$
		Or if we have a direct sum,  
		$U=\Pi_{i=1}^{n} U_i$ and $U_i$ is an open subset of $[0,\infty)^{m_i-b_i}\times \mathbb{R}^{b_i}\subseteq \mathbb{R}^{m_i}$, then we define 
		$$C^{(k_1,\cdots,k_n)}(\Pi_{i=1}^{n} U_i,V):=\{f:\Pi_{i=1}^{n} U_i\longrightarrow V\mid \forall |m_{i}|\leq k_{i},\dfrac{\partial^{m_{1}}}{\partial x_{1}^{m_{1}}}\cdots\dfrac{\partial^{m_{n}}}{\partial x_{n}^{m_{n}}}f(x_{1,}\cdots x_{n}) \ is \ continuous\} $$ where $m_i$ are muti-indexes, and $|m_{i}|:=|(m_{i1},\cdots,m_{in_i})|=\underset{j}{\sum}m_{ij}$.
		Or sometimes, we say a function $f$ has $(k_1,\cdots,k_n)$-th order continuous partial derivatives, if it is $C^{(k_1,\cdots,k_n)}$.
	\end{definition}
	\begin{lemma}[smoothness of solution to ordinary differential equations.]\label{smoothness of solution to ode}
		Given an arbitrary ordinary differential equation:
		\begin{equation}\label{general ode}
			\begin{cases}
				\dfrac{dy_{i}}{dt}(t,\lambda_1,\cdots, \lambda_{n})=f(t,y_{1},\cdots,y_{l},\lambda_{1},\cdots,\lambda_{n})\\
				y(t_{0}(\lambda_{1},\cdots,\lambda_{n}),\lambda_{1},\cdots,\lambda_{n})=y_{0}(\lambda_{1},\cdots,\lambda_{n})
			\end{cases}
		\end{equation}
		Suppose $k_i\geq 0$, $f(t,y,\lambda_{1},\cdots,\lambda_{n})$ is $C^{(k_0,\infty,k_1,\cdots,k_n)}$, and $t_{0}(\lambda_{1},\cdots,\lambda_{n})$, $y_{0}(\lambda_{1},\cdots,\lambda_{n})$ are $C^{(k_1,\cdots,k_n)}$.
		
		Then the solution $y=y(t,\lambda_{1},\cdots,\lambda_{n})$ of equation (\ref{general ode}) is $C^{(k_{0}+1,k_1,\cdots,k_n)}$.
	\end{lemma}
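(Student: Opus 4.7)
The plan is to reduce to a constant initial time by a change of variable, then proceed by induction on the sum $k_1 + \cdots + k_n$, treating regularity in $t$ (handled directly from the ODE) separately from regularity in the parameters $\lambda$ (handled via the variational equation).

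First I would substitute $s = t - t_0(\lambda)$ to reduce to the case $t_0 \equiv 0$; since $t_0$ is $C^{(k_1,\dots,k_n)}$, the new right-hand side $\tilde f(s, \tilde y, \lambda) := f(s + t_0(\lambda), \tilde y, \lambda)$ still lies in $C^{(k_0, \infty, k_1, \dots, k_n)}$, and the initial condition becomes $\tilde y(0, \lambda) = y_0(\lambda)$. For the $t$-regularity, once continuity of $y$ in all variables is known (which follows from a Grönwall estimate), the ODE $\dot y = f(t, y, \lambda)$ has a continuous right-hand side in $t$, so $y \in C^1$ in $t$; differentiating the ODE and using that $f$ is $C^\infty$ in $y$ and $C^{k_0}$ in $t$ yields $y \in C^{k_0+1}$ in $t$ by a routine induction on $k_0$.

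For the regularity in $\lambda$ I would induct on the total order $|k| = k_1 + \cdots + k_n$. The base case $k_1 = \cdots = k_n = 0$ reduces to continuous dependence on parameters, a standard Grönwall estimate. For the inductive step, fix $j$ with $k_j \geq 1$ and consider the variational equation
\begin{equation*}
  \frac{d u_j}{dt} = \partial_y f(t, y, \lambda)\, u_j + \partial_{\lambda_j} f(t, y, \lambda), \qquad u_j(0, \lambda) = \partial_{\lambda_j} y_0(\lambda).
\end{equation*}
A standard Grönwall argument shows that the difference quotient $(y(t, \lambda + h e_j) - y(t, \lambda))/h$ converges to $u_j$ uniformly on compacts, hence $u_j = \partial_{\lambda_j} y$. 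This variational equation has the same form as the original but with $k_j$ replaced by $k_j - 1$: one derivative in $\lambda_j$ has been spent in building the coefficients, while $\partial_y f$ and $\partial_{\lambda_j} f$ remain $C^\infty$ in their $y$-argument and retain the given regularity in the remaining $\lambda_i$'s. The inductive hypothesis then yields $u_j \in C^{(k_0 + 1, k_1, \dots, k_j - 1, \dots, k_n)}$, which combined with the $t$-regularity completes the inductive step.

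The main obstacle is the bookkeeping of mixed regularity under composition: one must verify that whenever $y$ has prescribed mixed regularity in $(t, \lambda)$ and $f$ is $C^\infty$ in $y$ with the hypothesized mixed regularity in the remaining variables, the composition $f(t, y(t, \lambda), \lambda)$ inherits the expected mixed regularity $C^{(k_0, k_1, \dots, k_n)}$. This amounts to a multivariate Faà di Bruno-type identity together with the observation that a mixed partial $\partial^{m_0}_t \partial^{m_1}_{\lambda_1}\cdots \partial^{m_n}_{\lambda_n}$ acting on the composition only involves $y$-derivatives of $f$ (which exist by the $C^\infty_y$ hypothesis, explaining why that assumption is made) and derivatives of $y$ of orders at most $(m_0, m_1, \dots, m_n)$ in the corresponding variables. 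I would record this closure-under-composition property as a preliminary lemma before launching the main induction, as it is the step where the mixed multi-index structure of $C^{(k_1, \dots, k_n)}$ (rather than the simpler joint $C^k$ class) really has to be used.
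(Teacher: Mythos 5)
Your overall induction (on the total order, via the variational equation, with Gr\"onwall for the base case and for convergence of the difference quotients) is the same as the paper's, but your opening reduction to a constant initial time is a genuine gap. The claim that $\tilde f(s,\tilde y,\lambda):=f(s+t_0(\lambda),\tilde y,\lambda)$ is still $C^{(k_0,\infty,k_1,\dots,k_n)}$ is false in general: by the chain rule $\partial_{\lambda_j}\tilde f=(\partial_t f)\,\partial_{\lambda_j}t_0+\partial_{\lambda_j}f$, so a mixed partial $\partial_s^{l_0}\partial_\lambda^{l}\tilde f$ involves $\partial_t^{\,l_0+|l|}f$, i.e.\ you would need $f$ to be $C^{k_0+k_1+\cdots+k_n}$ in $t$, whereas only $C^{k_0}$ is assumed. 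Concretely, take $n=1$, $k_0=0$, $k_1=1$, $f(t,y,\lambda)=g(t)$ with $g$ continuous but nowhere differentiable, and $t_0(\lambda)=\lambda$: then $\tilde f(s,y,\lambda)=g(s+\lambda)$ is not $C^1$ in $\lambda$, so the inductive hypothesis cannot be applied to the transformed problem, even though the conclusion of the lemma does hold for the original one (here $y=y_0(\lambda)+G(t)-G(\lambda)$ with $G'=g$ is indeed $C^{(1,1)}$). The back-substitution $y(t,\lambda)=\tilde y(t-t_0(\lambda),\lambda)$ fails for the same reason. This matters for the paper's actual applications (e.g.\ Lemma \ref{Parallel transport of smooth vector fields}, where the initial time is $t_0=t$, a parameter, and the time-regularity of the right-hand side is finite).

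The fix is what the paper does: keep $t_0(\lambda)$ and absorb it into the initial condition of the variational equation,
\begin{equation*}
\frac{\partial y}{\partial\lambda_{j}}\bigl(t_{0}(\lambda),\lambda\bigr)=\frac{\partial y_{0}}{\partial\lambda_{j}}(\lambda)-\frac{\partial y}{\partial t}\bigl(t_{0}(\lambda),\lambda\bigr)\,\frac{\partial t_{0}}{\partial\lambda_{j}}(\lambda),
\end{equation*}
which costs only a single $t$-derivative of $y$, and that is supplied by the ODE itself (its regularity in $\lambda$ is controlled by the inductive hypothesis applied to $\partial y/\partial t$). With this correction your induction goes through. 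One further difference worth noting: your plan routes the regularity of $f(t,y(t,\lambda),\lambda)$ through an anisotropic Fa\`a di Bruno composition lemma, whereas the paper avoids this entirely by treating the variational equation as a system coupled with the original ODE, so that $y$ enters as a dependent variable (in which $f$ is $C^{\infty}$) rather than as a composed function; this makes the bookkeeping you flag as the ``main obstacle'' unnecessary.
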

	\begin{proof}
		We prove by induction.
		
		When $\sum_{i=0}^{n}k_{i}=0 $, it is just the continuously dependency of solutions on parameters.
		
		Suppose we have proved this lemma for $\sum_{i=0}^{n}k_{i}=q $, and we are going to prove it for$\sum_{i=0}^{n}k_{i}=q+1$.
		
		If $k_1,\cdots,k_n$ are not all zero, then suppose $k_{i}\neq 0$.
		Take the derivative of both sides of equation (\ref{general ode}) with respect to $\lambda_i$. By continuously differentiablity of solution to ordinary differential equations, $\dfrac{\partial y}{\partial\lambda_{i}}$ is continuous. Because $\dfrac{\partial y}{\partial t}$ and $ \dfrac{\partial}{\partial\lambda_{i}}\dfrac{\partial y}{\partial t}$ are continuous, we have $\dfrac{\partial}{\partial\lambda_{i}}\dfrac{\partial y}{\partial t}=\dfrac{\partial}{\partial t}\dfrac{\partial y}{\partial\lambda_{i}}$. Then we have:
		\begin{equation}\label{partial lambda of both sides of equation}
			\begin{cases}
				\dfrac{\partial}{\partial t}\dfrac{\partial y}{\partial\lambda_{i}}(t,\lambda) & =\dfrac{\partial f}{\partial\lambda_{i}}(t,y(t,\lambda),\lambda)+\dfrac{\partial f}{\partial y}(t,y(t,\lambda),\lambda)\dfrac{\partial y}{\partial\lambda_{i}}(t,\lambda)\\
				\dfrac{\partial y}{\partial\lambda_{i}}(t_{0},\lambda) & =\dfrac{\partial y_{0}}{\partial\lambda_{i}}(\lambda)-\dfrac{\partial y}{\partial t}(t_{0},\lambda)\dfrac{\partial t_{0}}{\partial\lambda_{i}}(\lambda)
			\end{cases}
		\end{equation}
		Let $\tilde{f}(t,\tilde{y},\lambda):=\dfrac{\partial f}{\partial\lambda_{i}}(t,y,\lambda)+\dfrac{\partial f}{\partial y}(t,y,\lambda)\tilde{y}$, then $\tilde{f}$ is $C^{(k_0,\infty,k_1,\cdots,k_{i}-1,\cdots,k_n)}$. And by induction, the lemma holds for $\sum_{i=0}^{n}k_{i}=q $, so $\dfrac{\partial y}{\partial t}$ is $C^{(k_0,k_1,\cdots,k_{i}-1,\cdots,k_n)}$, hence the initial value of equation \ref{partial lambda of both sides of equation} is $C^{(k_0,k_1,\cdots,k_{i}-1,\cdots,k_n)}$. Then $\dfrac{\partial y}{\partial\lambda_{i}}(t_0,\lambda)$ is $C^{(k_0+1,k_1,\cdots,k_{i}-1,\cdots,k_n)}$.
		
		If $k_1=\cdots=k_n =0$, then $k_{0}=q+1$. Take the derivative of both sides of equation (\ref{general ode}) with respect to $t$:
		\begin{equation}
			\begin{cases}
				\dfrac{\partial}{\partial t}\dfrac{\partial y}{\partial t}(t,\lambda) & =\dfrac{\partial f}{\partial t}(t,y(t,\lambda),\lambda)+\dfrac{\partial f}{\partial y}(t,y(t,\lambda),\lambda)\dfrac{\partial y}{\partial t}(t,\lambda)\\
				\dfrac{\partial y}{\partial t}(t_{0}(\lambda),\lambda) & =f(t_{0}(\lambda),y_{0}(\lambda),\lambda)
			\end{cases}
		\end{equation}
		Then $\dfrac{\partial y}{\partial t}(t,\lambda)$ has $(q,0,\cdots,0)$-th order continuous partial derivatives by induction, which completes the proof. 
	\end{proof}

	\begin{lemma}[Parallel transport of smooth vector fields]\label{Parallel transport of smooth vector fields}
		Let
		\begin{equation}
			\begin{array}{cccc}
				\alpha: & I^{n+1} & \longrightarrow & M\\
				& (s_{1},\cdots,s_{n},t) & \longmapsto & \alpha(s_{1},\cdots,s_{n},t)
			\end{array}
		\end{equation}
		be a $C^{(k_1,\cdots,k_{n+1})}$ map, $k_i\geq 0$ and $\dfrac{\partial \alpha}{\partial t}$ is continuous. 
		$X$ is a $C^{(k_1,\cdots,k_{n+1})}$ vector field along $\alpha$. 
		Then $P^{t\rightarrow 0}_{\alpha_{s}}X(s,t)$ is a $C^{(k_1,\cdots,k_{n+1})}$ vector field along $\beta:(s_{1},\cdots,s_{n},t)\longmapsto\alpha(s_{1},\cdots,s_{n},0)$.
	\end{lemma}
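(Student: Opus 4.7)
The plan is to reduce the claim to the smoothness lemma for ODE solutions (Lemma~\ref{smoothness of solution to ode}) applied in local coordinates. Since the statement is local in $(s,t)\in I^{n+1}$, I fix a point $(s_0,t_0)$ and cover the compact curve $\alpha(s_0,[0,t_0])$ by finitely many coordinate charts of $M$, choosing a partition $0=\tau_0<\tau_1<\cdots<\tau_N=t_0$ so that $\alpha(s_0,[\tau_{j-1},\tau_j])$ lies in the $j$-th chart. By continuity of $\alpha$ the same partition works on a full neighborhood of $(s_0,t_0)$, so I can argue chart by chart, feeding the endpoint of stage $j{-}1$ (whose regularity has already been established) as the initial data for stage $j$.

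In a single chart, I pick a smooth local frame $\{e_i(s)\}$ of $T_{\alpha(s,0)}M$ depending smoothly on $s$ and parallel transport it along $\alpha_s(\cdot)$ to obtain $\{f_i(s,t)\}$. Writing $f_i=f_i^a\,\partial/\partial x^a$, the parallel transport equation reads
\begin{equation*}
	\frac{\partial f_i^a}{\partial t}(s,t)\;=\;-f_i^b(s,t)\,\Gamma^a_{bc}(\alpha(s,t))\,\frac{\partial\alpha^c}{\partial t}(s,t),\qquad f_i^a(s,0)=e_i^a(s).
\end{equation*}
I regard this as an ODE in $t$ with state $f$ and parameters $s=(s_1,\dots,s_n)$. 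The right-hand side is $C^\infty$ in $f$ (it is linear), $C^{(k_1,\dots,k_n)}$ in $s$, and in $t$ it inherits the regularity of $\alpha$ and of $\partial_t\alpha$: when $k_{n+1}\geq 1$ it is $C^{k_{n+1}-1}$ in $t$, and when $k_{n+1}=0$ it is merely continuous, which is exactly what the separate hypothesis on $\partial\alpha/\partial t$ guarantees. In either case Lemma~\ref{smoothness of solution to ode} yields that $f_i^a(s,t)$ is at least $C^{(k_1,\dots,k_n,k_{n+1})}$ in $(s,t)$.

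The coefficients $X^i(s,t)$ in the expansion $X(s,t)=X^i(s,t)f_i(s,t)$ are obtained by inverting the nondegenerate matrix $[f_i^a]$, which is $C^{(k_1,\dots,k_{n+1})}$, so the $X^i$ inherit the $C^{(k_1,\dots,k_{n+1})}$ regularity of $X$. Since $\{f_i(s,\cdot)\}$ is parallel along $\alpha_s$, we conclude
\begin{equation*}
	P^{t\to 0}_{\alpha_s} X(s,t)\;=\;X^i(s,t)\,f_i(s,0)\;=\;X^i(s,t)\,e_i(s),
\end{equation*}
which is manifestly a $C^{(k_1,\dots,k_{n+1})}$ vector field along $\beta(s,t)=\alpha(s,0)$.

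The main obstacle is the edge case $k_{n+1}=0$: without the separately assumed continuity of $\partial_t\alpha$ the ODE right-hand side would not even be continuous in $t$, and Lemma~\ref{smoothness of solution to ode} could not be invoked. A secondary subtlety is the chart-patching step, where one must check inductively that the value of the parallel-transported frame at each partition point $\tau_j$ depends on $s$ with the regularity required to serve as initial data for the next chart's ODE; this is automatic once the single-chart conclusion is in hand, so it costs only bookkeeping and no new idea.
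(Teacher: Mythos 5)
Your proof is correct, but it takes a genuinely different route from the paper's. The paper does not transport a frame: it introduces an auxiliary variable $u$ and studies $\tilde{\alpha}(s,t,u):=P^{t\rightarrow u}_{\alpha_s}X(s,t)$ directly, which in a chart solves a linear ODE in $u$ with $(s,t)$ as parameters and with initial condition prescribed on the diagonal $u=t$ (this is precisely why Lemma \ref{smoothness of solution to ode} is stated with a parameter-dependent initial time $t_0(\lambda)$); the regularity $C^{(k_1,\cdots,k_n,k_{n+1},k_{n+1})}$ of $\tilde\alpha$ then follows, the Lebesgue-number lemma patches the conclusion over $I^{n+2}$, and one evaluates at $u=0$. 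You instead solve a single ODE in $t$ for the parallel frame $f_i(s,t)$ launched from $t=0$, and recover $P^{t\rightarrow 0}_{\alpha_s}X(s,t)=X^i(s,t)\,e_i(s)$ by linear algebra. Your method avoids the extra variable and the two-dimensional patching, and only needs the initial time $0$; the price is that you must invoke closure of the anisotropic classes $C^{(k_1,\cdots,k_{n+1})}$ under products, composition with smooth maps, and matrix inversion (iterated Leibniz/chain rule) to justify that the coefficients $X^i$ and the final expression retain the stated regularity — facts the paper's direct argument never needs in this proof, and which you assert rather than prove, though they are routine. Your chart-chaining induction along $t$ is sound: the frame value at each partition time is $C^{(k_1,\cdots,k_n)}$ in $s$ and serves as admissible initial data for the next chart. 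Both arguments share the same treatment of the edge case $k_{n+1}=0$, where the separately assumed continuity of $\partial\alpha/\partial t$ is exactly what makes the ODE right-hand side continuous in the time variable; you flag this explicitly, which the paper leaves implicit.
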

	\begin{proof}

		Let $\tilde{\alpha}:(s,t,u)\longrightarrow P_{\alpha_{s}}^{t\rightarrow u}X(s,t)$.
		Then in local coordinate system $(U,\varphi)$, $\tilde{\alpha}$ satisfies the following equation:
		\begin{equation}
			\begin{cases}
				\dfrac{d}{du}\tilde{\alpha}^{l}(s,t,u)=-\dfrac{\partial\alpha^{i}}{\partial t}(s,u)\tilde{\alpha}^{j}(s,t,u)\Gamma_{ij}^{l}(\alpha(s,u))=:\tilde{f}(s,t,u,\tilde{\alpha})\\
				\tilde{\alpha}(s,t,t)=X(s,t)
			\end{cases}
		\end{equation}
		Let $\hat{\alpha}:(s,t,u)\longmapsto\alpha(s,u)$, Then $\tilde{f}$ is defined on $\hat{\alpha}^{-1}(U)\times \mathbb{R}^m$
		
		It's clear that $\tilde{f}(s,t,u,\tilde{\alpha})$ is $C^{(k_1,\cdots,k_n,\infty ,k_{n+1}-1,\infty)}$, and $X(s,t)$ is $C^{(k_1,\cdots,k_{n+1})}$. By lemma \ref{smoothness of solution to ode}, we know that $\tilde{\alpha}(s,t,u)$ is 
		$C^{(k_1,\cdots,k_n,k_{n+1},k_{n+1})}$(on the region where the solution exists).
		
		To prove $\tilde{\alpha}$ has enough smoothness on $I^{n+2}$ , we need a lemma:
		\begin{lemma}\label{lemma lebesgue number}
			Let $f:I^{n} \longrightarrow M$ be continuous map, then $\exists \varepsilon >0$, for all subset $X_{\varepsilon}\subseteq I^{n}$ with diameter $diam(X_{\varepsilon})<\varepsilon$, $f(X_{\varepsilon})$ is contained in some coordinate domain $(U,\varphi)$ of M. 
		\end{lemma}
		\begin{proof}
			Let $\{U_i\}$ be a coordinate cover of M, then $\{f^{-1}(U_i)\}$ is an open cover of $I^{n}$. Suppose $\lambda$ is the Lebesgue number of covering $\{f^{-1}(U_i)\}$, then we can let $\varepsilon < \lambda$, which completes the proof. 
		\end{proof}
		
		For $\hat{\alpha}(s,t,u)=\alpha(s,u)$ is continuous, so $\exists \varepsilon>0$, such that each cube $X_{\varepsilon}\subseteq I^{n+2}$ with side length $\varepsilon$ satisfies $\hat{\alpha}(X_{\varepsilon})\subseteq U_i$ for some coordinate neighborhood $U_i$ of $M$.
		
		So if the initial value of $\tilde{\alpha}(s,t,u)$ in $X_{\varepsilon}$  is $C^{(k_1,\cdots,k_n,k_{n+1})}$, then $\tilde{\alpha}(s,t,u)$ is $C^{(k_1,\cdots,k_n,k_{n+1},k_{n+1})}$ in $X_{\varepsilon}$. For $\tilde{\alpha}(s,t,t)=X(s,t)$ is $C^{(k_1,\cdots,k_{n+1})}$, we conclude that 
		$\tilde{\alpha}(s,t,u)$ is $C^{(k_1,\cdots,k_n,k_{n+1},k_{n+1})}$ in $I^{n+2}$. 
		
		So the vector field $P^{t\rightarrow 0}_{\alpha_{s}}X(s,t)=\tilde{\alpha}(s,t,0)$ is $C^{(k_1,\cdots,k_{n+1})}$. 
	\end{proof}
	
	\begin{lemma}\label{exponential law for any topo space}
		Let $X,Y,Z$ be topological spaces, then $f\in C(X\times Y,Z)\Longrightarrow f^{\lor }\in C(X,C(Y,Z))$, where $f^{\lor }:x\longmapsto (y\mapsto f(x,y))$
	\end{lemma}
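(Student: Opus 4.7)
The plan is to work with the standard compact-open topology on $C(Y,Z)$, whose subbasis consists of the sets $W(K,U):=\{g\in C(Y,Z):g(K)\subseteq U\}$ for $K\subseteq Y$ compact and $U\subseteq Z$ open. I would split the proof into the slicewise check and the continuity check.

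First I would verify that $f^\vee$ actually lands in $C(Y,Z)$, i.e.\ that for each $x\in X$ the slice $f^\vee(x):y\mapsto f(x,y)$ is continuous. This is immediate because $f^\vee(x)$ factors as the composition of the continuous inclusion $Y\hookrightarrow X\times Y$ sending $y\mapsto(x,y)$ with the continuous map $f$.

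Next I would establish continuity of $f^\vee:X\to C(Y,Z)$ by checking that $(f^\vee)^{-1}(W(K,U))$ is open in $X$ for every subbasic $W(K,U)$. Unwinding the definitions, this preimage equals $\{x\in X:\{x\}\times K\subseteq f^{-1}(U)\}$. Fix a point $x_0$ in this preimage; then $\{x_0\}\times K$ lies inside the open set $f^{-1}(U)\subseteq X\times Y$.

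The crux is the tube lemma: since $K$ is compact and $\{x_0\}\times K\subseteq f^{-1}(U)$ is contained in an open set of $X\times Y$, there exists an open neighborhood $V\ni x_0$ in $X$ such that $V\times K\subseteq f^{-1}(U)$. This $V$ sits inside $(f^\vee)^{-1}(W(K,U))$, which proves openness and hence continuity. The only genuine content is the tube lemma, which requires nothing beyond compactness of $K$, so no additional hypotheses on $X$, $Y$, or $Z$ are needed; all other steps are formal unpackings of the compact-open topology and the exponential transpose.
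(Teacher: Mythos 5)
Your proof is correct and follows essentially the same route as the paper: the paper simply proves the tube lemma inline (covering $\{x_0\}\times K$ by basic open boxes inside $f^{-1}(U)$, extracting a finite subcover by compactness of $K$, and intersecting the $X$-factors), whereas you cite it as a named lemma. The slicewise continuity check you include is a harmless addition that the paper leaves implicit.
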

	\begin{proof}
		If $f\in C(X\times Y,Z)$, Let $U^K:=\{g:Y\longrightarrow Z|g(K\subseteq U)\}$. Suppose $f^{\lor}(x_0)\in U^K$, then $f(\{x_0\}\times K)\subseteq U$. So $\forall (x_0,y)\in \{x_0\}\times K$, $\exists U_{1,y}\times U_{2,y}\subseteq f^{-1}(U)$ such that $(x_0,y)\in U_{1,y}\times U_{2,y}$. Then $\{U_{1,y}\times U_{2,y}\}_{y\in K}$ is an open cover of $\{x_0\}\times K$, and there exists a finite subcover $\{U_{1,y_i}\times U_{2,y_i}\}_{1\leq i\leq n}$. Let $U_{x_0}:=\bigcap_{i=1}^{n}U_{1,y_i}$, then $f(U_{x_0}\times K)\subseteq U$, hence $U_{x_0}\subseteq (f^{\lor})^{-1}(U^K)$.
	\end{proof}
	\begin{lemma}\label{exponential law for Y locally compact}
		Let $X,Y,Z$ be topological spaces, and $Y$ is locally compact then $f\in C(X,C(Y,Z))\Longrightarrow f^\wedge\in C(X\times Y,Z)$, where $f^{\wedge }:(x,y)\longmapsto f(x)(y)$
	\end{lemma}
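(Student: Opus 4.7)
The plan is to verify continuity of $f^\wedge$ pointwise, using that the compact-open topology on $C(Y,Z)$ has a subbase of sets of the form $W^K = \{g : g(K)\subseteq W\}$ with $K\subseteq Y$ compact and $W\subseteq Z$ open, and using the local compactness of $Y$ to produce a compact neighborhood below a prescribed open set.

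Fix $(x_0,y_0)\in X\times Y$ and an open neighborhood $W$ of $f^\wedge(x_0,y_0)=f(x_0)(y_0)$ in $Z$. Since $f(x_0)\in C(Y,Z)$, the preimage $(f(x_0))^{-1}(W)$ is open in $Y$ and contains $y_0$. By local compactness of $Y$, I can choose a compact neighborhood $K$ of $y_0$ with $K\subseteq (f(x_0))^{-1}(W)$; equivalently, $f(x_0)\in W^K$. Let $V$ be the interior of $K$, an open neighborhood of $y_0$.

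Now I use continuity of $f:X\to C(Y,Z)$: the set $W^K$ is open in the compact-open topology, so $U:=f^{-1}(W^K)$ is an open neighborhood of $x_0$. Then for any $(x,y)\in U\times V$ I have $f(x)(K)\subseteq W$ and $y\in V\subseteq K$, whence $f^\wedge(x,y)=f(x)(y)\in W$. Thus $f^\wedge(U\times V)\subseteq W$, which establishes continuity of $f^\wedge$ at the arbitrary point $(x_0,y_0)$.

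The only delicate step is the selection of the compact neighborhood $K$ with $K\subseteq (f(x_0))^{-1}(W)$; this is exactly where local compactness of $Y$ is required and where the hypothesis cannot be dropped. Everything else is a direct unwinding of the definitions of the compact-open topology and the adjunction $f\leftrightarrow f^\wedge$, so I expect no further obstacle.
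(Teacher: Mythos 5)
Your proof is correct and follows essentially the same route as the paper: use local compactness to produce a compact neighborhood $K$ of $y_0$, observe that $f^{-1}(W^K)$ is open by continuity of $f$, and take the product with the interior of $K$. In fact your version is slightly more careful than the paper's, which takes an arbitrary compact neighborhood $K_{y_0}$ of $y_0$ without first shrinking it to lie inside $(f(x_0))^{-1}(W)$ — the step that guarantees $x_0\in f^{-1}(W^{K})$ and which genuinely requires that $y_0$ admit a neighborhood base of compact sets rather than merely some compact neighborhood.
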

	\begin{proof}
		If $f\in C(X,C(Y,Z))$, let $(x_0,y_0)\in (f^\wedge)^{-1}(U)$, then $f^{\wedge}(x_0,y_0)\in U$. Take a compact neighborhood $K_{y_0}$ of $y_0$, then $U^{K_{y_0}}$ is open in $C(Y,Z)$, hence $f^{-1}(U^{K_{y_0}})$ is open in $X$. finally, take a open neighborhood $U_{y_0}$ of $y_0$ in $K_{y_0}$, then $(x_0,y_0)\in f^{-1}(U^{K_{y_0}})\times U_{y_0}\subseteq (f^\wedge)^{-1}(U)$, so $f^{\wedge}$ is continuous.
	\end{proof}

	\subsection{Preliminaries about Banach manifolds}
	In this section, we set $k_i\in\mathbb{Z}_{\geq 0}$.
	\begin{definition}
		Let U be an open set in $[0,\infty)^{n-b}\times \mathbb{R}^b$, $k_i\geq 0$ and $V$ is a (finite dimensional) vector space. The 
		$C^{(k_{1},\cdots,k_{n})}$ compact open topology on $C^{(k_{1},\cdots,k_{n})}(U,V)$ is defined to be the initial topology with respect to the family of mappings
		\begin{equation}
			\begin{array}{cccc}
				\partial^{(l_{1},\cdots,l_{n})}: & C^{(k_{1},\cdots,k_{n})}(U,V) & \longrightarrow & C(U,V)\\
				& g & \longmapsto & \partial_{u}^{(l_{1},\cdots,l_{n})}g:=\frac{\partial^{l_{1}}}{\partial u_{1}^{l_{1}}}\cdots\frac{\partial^{l_{n}}}{\partial u_{n}^{l_{n}}}g
			\end{array}
		\end{equation}
		which consists every $\partial^{(l_{1},\cdots,l_{n})}$ such that $l_i\leq k_i\in\mathbb{Z}_{\geq 0}$.
	\end{definition}
\begin{theorem}\label{general exponential law}
	Let $U$ be an open set in $[0,\infty)^{n-1-b}\times \mathbb{R}^b$, or $U=I^{n-1}$, then $f\in C^{(k_{1},\cdots,k_{n})}(\mathbb{R}\times U,V)\Longleftrightarrow f^{\lor}\in C^{k_{1}}\left(\mathbb{R},C^{(k_{2},\cdots,k_{n})}(U,V)\right)$, here 
	\begin{equation}
		\begin{array}{cccc}
			f^{\lor}: & \mathbb{R} & \longrightarrow & C^{(k_{2},\cdots,k_{n})}(U,V)\\
			& r & \longmapsto & u\mapsto f(r,u)
		\end{array}
	\end{equation}
\end{theorem}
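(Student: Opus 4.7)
The plan is to reduce both implications to the classical continuous exponential laws (Lemmas \ref{exponential law for any topo space} and \ref{exponential law for Y locally compact}) via induction on $k_{1}$. The key structural remark is that, by definition, the $C^{(k_{2},\cdots,k_{n})}$ compact-open topology is the initial topology for the derivation maps $\partial^{(l_{2},\cdots,l_{n})}:C^{(k_{2},\cdots,k_{n})}(U,V)\to C(U,V)$ with $l_{j}\leq k_{j}$. Consequently, continuity or convergence in $C^{(k_{2},\cdots,k_{n})}(U,V)$ is detected after post-composing with each such derivation, which returns us to the compact-open space $C(U,V)$ where the earlier lemmas apply directly.

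For the forward direction I would induct on $k_{1}$. Given $f\in C^{(k_{1},\cdots,k_{n})}(\mathbb{R}\times U,V)$, it is immediate that $f^{\lor}(r)\in C^{(k_{2},\cdots,k_{n})}(U,V)$ for every $r$. When $k_{1}=0$, continuity of $\partial^{(0,l_{2},\cdots,l_{n})}f$ on $\mathbb{R}\times U$ combined with Lemma \ref{exponential law for any topo space} yields continuity of $r\mapsto\partial^{(l_{2},\cdots,l_{n})}f^{\lor}(r)\in C(U,V)$, and the initial-topology characterization promotes this to continuity of $f^{\lor}$ into $C^{(k_{2},\cdots,k_{n})}(U,V)$. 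For $k_{1}\geq 1$ I would show $f^{\lor}$ is differentiable with derivative $(\partial_{r}f)^{\lor}$: applying $\partial^{(l_{2},\cdots,l_{n})}$ to the difference quotient and invoking the fundamental theorem of calculus together with equality of mixed partials of $f$,
\[
\frac{\partial^{(0,l_{2},\cdots,l_{n})}f(r+h,u)-\partial^{(0,l_{2},\cdots,l_{n})}f(r,u)}{h}-\partial^{(1,l_{2},\cdots,l_{n})}f(r,u)=\int_{0}^{1}\!\bigl(\partial^{(1,l_{2},\cdots,l_{n})}f(r+sh,u)-\partial^{(1,l_{2},\cdots,l_{n})}f(r,u)\bigr)\,ds,
\]
uniform continuity of the continuous integrand on compact cylinders $[r-\delta,r+\delta]\times K$ drives the right-hand side to zero in the compact-open topology on $C(U,V)$, so the difference quotient converges in $C^{(k_{2},\cdots,k_{n})}(U,V)$. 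Since $\partial_{r}f\in C^{(k_{1}-1,k_{2},\cdots,k_{n})}$, the induction hypothesis applied to $\partial_{r}f$ gives $(\partial_{r}f)^{\lor}\in C^{k_{1}-1}(\mathbb{R},C^{(k_{2},\cdots,k_{n})}(U,V))$, whence $f^{\lor}\in C^{k_{1}}$.

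For the backward direction, assume $f^{\lor}\in C^{k_{1}}(\mathbb{R},C^{(k_{2},\cdots,k_{n})}(U,V))$ and again induct on $k_{1}$. When $k_{1}=0$, composing with the continuous projection $\partial^{(l_{2},\cdots,l_{n})}$ yields a continuous map $\mathbb{R}\to C(U,V)$, and Lemma \ref{exponential law for Y locally compact} (applicable since $U$ is locally compact) delivers continuity of $(r,u)\mapsto\partial^{(0,l_{2},\cdots,l_{n})}f(r,u)$ on $\mathbb{R}\times U$, so $f\in C^{(0,k_{2},\cdots,k_{n})}$. For $k_{1}\geq1$, continuity and linearity of the point evaluation $\mathrm{ev}_{u}:C^{(k_{2},\cdots,k_{n})}(U,V)\to V$ show that $f(\cdot,u)=\mathrm{ev}_{u}\circ f^{\lor}$ is $C^{k_{1}}$ in $r$ with derivative $(f^{\lor})'(r)(u)$; hence $\partial_{r}f$ exists pointwise and equals $((f^{\lor})')^{\wedge}$. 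Since $(f^{\lor})'\in C^{k_{1}-1}(\mathbb{R},C^{(k_{2},\cdots,k_{n})}(U,V))$, the induction hypothesis gives $\partial_{r}f\in C^{(k_{1}-1,k_{2},\cdots,k_{n})}(\mathbb{R}\times U,V)$. Combined with continuity of the pure $u$-partials from the $k_{1}=0$ step and Clairaut's theorem to commute $\partial_{r}$ past $\partial_{u}^{(l_{2},\cdots,l_{n})}$, this yields $f\in C^{(k_{1},k_{2},\cdots,k_{n})}$.

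The main obstacle is the pair of identifications bridging the abstract TVS-valued calculus and the classical partial-derivative calculus: in the forward step, promoting the convergence of difference quotients from $C(U,V)$ to the stronger topology of $C^{(k_{2},\cdots,k_{n})}(U,V)$; in the backward step, recognising the TVS-valued derivative $(f^{\lor})'$ as the classical partial $\partial_{r}f$ and then commuting it past the $u$-partials via Clairaut. Both obstructions are resolved because the initial-topology description of $C^{(k_{2},\cdots,k_{n})}(U,V)$ reduces every question to the compact-open topology on $C(U,V)$, where uniform-on-compacts control via the fundamental theorem of calculus and the preceding exponential laws supply all the needed regularity.
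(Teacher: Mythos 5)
Your proposal is correct and follows essentially the same route as the paper: the $k_{1}=0$ case via the initial-topology description of $C^{(k_{2},\cdots,k_{n})}(U,V)$ together with the continuous exponential laws, and the inductive step via identifying the TVS-valued derivative of $f^{\lor}$ with $(\partial_{r}f)^{\lor}$, where the convergence of the difference quotient is controlled by the same fundamental-theorem-of-calculus integral representation the paper uses. Your explicit appeal to Clairaut in the backward direction only makes precise a commutation of partials that the paper leaves implicit.
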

\begin{proof}
	First, we prove the case $k_1=0$. By definition of the topology on $C^{(k_{2},\cdots,k_{n})}(U,V)$, $f^\lor\in C^{0}\left(\mathbb{R},C^{(k_{2},\cdots,k_{n})}(U,V)\right)\Longleftrightarrow \partial^{(l_{2},\cdots,l_{n})}\circ f^{\lor}$ is continuous for any $l\leq k$ (that means $l_i \leq k_i$). Here $\partial^{(l_{2},\cdots,l_{n})}$ is the map:
	\begin{equation}\label{partial operator}
		\begin{array}{cccc}
			\partial^{(l_{2},\cdots,l_{n})}: & C^{(k_{2},\cdots,k_{n})}(U,V) & \longrightarrow & C^{(k_{2}-l_{2},\cdots,k_{n}-l_{n})}(U,V)\\
			& g & \longmapsto & \partial_{u}^{(l_{2},\cdots,l_{n})}g:=\frac{\partial^{l_{2}}}{\partial u_{1}^{l_{2}}}\cdots\frac{\partial^{l_{n}}}{\partial u_{n-1}^{l_{n}}}g
		\end{array}
	\end{equation}
	Notice that $\left(\partial_{u}^{(l_{2},\cdots,l_{n})}f\right)^{\lor}=\partial^{(l_{2},\cdots,l_{n})}\circ f^{\lor}$ and we have:
	\begin{equation}
		\begin{array}{ccc}
			f\in C^{(0,k_{2},\cdots,k_{n})}(\mathbb{R}\times U,V) & \Longleftrightarrow & \partial_{u}^{(l_{2},\cdots,l_{n})}f\in C^{0}(\mathbb{R}\times U,V),\forall l\leq k\\
			& \Longleftrightarrow & \left(\partial_{u}^{(l_{2},\cdots,l_{n})}f\right)^{\lor}\in C^{0}(\mathbb{R},C^{0}(U,V),\forall l\leq k\\
			& \Longleftrightarrow & \partial^{(l_{2},\cdots,l_{n})}\circ f^{\lor}\in C^{0}(\mathbb{R},C^{0}(U,V),\forall l\leq k
		\end{array}
	\end{equation}
	the second line is resulted from the exponential law for continuous maps, and the third line is because $\left(\partial_{u}^{(l_{2},\cdots,l_{n})}f\right)^{\lor}=\partial^{(l_{2},\cdots,l_{n})}\circ f^{\lor}$.
	
	If $k\geq 1$, we claim that if $f\in C^{(k_{1},k_{2},\cdots,k_{n})}(\mathbb{R}\times U,V)$ or $f^\lor\in C^{k_1}(\mathbb{R},C^{(k_{2},\cdots,k_{n})}(U,V))$, then $\dfrac{df^{\lor}}{dr}=\left(\dfrac{\partial f}{\partial r}\right)^{\lor}$. Here $\dfrac{df^{\lor}}{dr}$ is the derivation of the curve $f^\lor :\mathbb{R}\longrightarrow C^{(k_{2},\cdots,k_{n})}(U,V)$. Suppose we proved the claim, and we assume that we have done for $k_1\leq m$. Let $k_1=m+1$, and let $f\in C^{(m+1,k_{2},\cdots,k_{n})}(\mathbb{R}\times U,V)$, then $\dfrac{\partial f}{\partial r}\in C^{(m,k_{2},\cdots,k_{n})}(\mathbb{R}\times U,V)$. So $\dfrac{df^{\lor}}{dr}=\left(\dfrac{\partial f}{\partial r}\right)^{\lor}\in C^{m}(C^{(k_{2},\cdots,k_{n})}(U,V))$ by induction and we are done.
	
	On the other hand, if $f^\lor\in C^{m+1}(\mathbb{R},C^{(k_{2},\cdots,k_{n})}(U,V))$, then by $\left(\dfrac{\partial f}{\partial r}\right)^{\lor}=\dfrac{df^{\lor}}{dr}\in C^{m}(\mathbb{R},C^{(k_{2},\cdots,k_{n})}(U,V))$, we know that $f\in C^{(m+1,k_{2},\cdots,k_{n})}(\mathbb{R}\times U,V)$.
	
	So now we only need to prove the claim $\dfrac{df^{\lor}}{dr}=\left(\dfrac{\partial f}{\partial r}\right)^{\lor}$, if $f\in C^{(k_{1},k_{2},\cdots,k_{n})}(\mathbb{R}\times U,V)$ or $f^\lor\in C^{k_1}(\mathbb{R},C^{(k_{2},\cdots,k_{n})}(U,V))$.
	If $f^\lor\in C^{k_1}(\mathbb{R},C^{(k_{2},\cdots,k_{n})}(U,V))$, the proof is trivial, the only trouble is that when $f\in C^{(k_{1},k_{2},\cdots,k_{n})}(\mathbb{R}\times U,V)$, $\dfrac{df^{\lor}}{dr}$ may not exist.
	To prove $\dfrac{df^{\lor}}{dr}$ exists and $\dfrac{df^{\lor}}{dr}=\left(\dfrac{\partial f}{\partial r}\right)^{\lor}$, we only need to show $\forall r\in \mathbb{R}$, the curve 
	\begin{equation}
		\begin{array}{cccc}
			c: & \mathbb{R} & \longrightarrow & C^{(k_{2},\cdots,k_{n})}(U,V)\\
			& t & \longmapsto & \begin{cases}
				\dfrac{f^{\lor}(r+t)-f^{\lor}(r)}{t} & t\neq0\\
				\left(\dfrac{\partial f}{\partial r}\right)^{\lor}(r) & t=0
			\end{cases}
		\end{array}
	\end{equation}
	is continuous.  
	Now we are going to prove $\partial^{(l_{2},\cdots,l_{n})}\circ c\in C^{0}(\mathbb{R},C^{0}(U,V)),\forall l\leq k\  and\  r\in \mathbb{R}$, where $\partial^{(l_{2},\cdots,l_{n})}$ is the map defined at (\ref{partial operator}).
	Equivalently, we will show that $\left(\partial^{(l_{2},\cdots,l_{n})}\circ c \right) ^\wedge:\mathbb{R}\times U\longrightarrow V$ is continuous.
	
	For $t\neq 0$ and $u\in U$, we have:
	\begin{equation}
		\begin{array}{ccc}
			\left(\partial^{(l_{2},\cdots,l_{n})}\circ c\right)^{\wedge}(t,u) & = & \partial^{(l_{2},\cdots,l_{n})}(c(t))(u)\\
			& = & \partial^{(l_{2},\cdots,l_{n})}\left(\dfrac{f^{\lor}(r+t)-f^{\lor}(r)}{t}\right)(u)\\
			& = & \dfrac{\partial_{u}^{(l_{2},\cdots,l_{n})}f(r+t,u)-\partial_{u}^{(l_{2},\cdots,l_{n})}f(r,u)}{t}\\
			& = & \int_{0}^{1}\partial_{r}\partial_{u}^{(l_{2},\cdots,l_{n})}f(r+st,u)ds
		\end{array}
	\end{equation}
	For $t=0$, we have:
	\begin{equation}
		\begin{array}{ccc}
			\left(\partial^{(l_{2},\cdots,l_{n})}\circ c\right)^{\wedge}(0,u) & = & \partial^{(l_{2},\cdots,l_{n})}\left(c(0)\right)(u)\\
			& = & \partial^{(l_{2},\cdots,l_{n})}\left(\left(\partial_{r}f\right)^{\lor}(r)\right)(u)\\
			& = & \left(\partial_{u}^{(l_{2},\cdots,l_{n})}\left(\partial_{r}f\right)\right)(r,u)\\
			& = & \partial_{r}\partial_{u}^{(l_{2},\cdots,l_{n})}f(r,u)
		\end{array}
	\end{equation}
	So $\left(\partial^{(l_{2},\cdots,l_{n})}\circ c\right)^{\wedge}(t,u)=\int_{0}^{1}\partial_{r}\partial_{u}^{(l_{2},\cdots,l_{n})}f(r+st,u)ds$ for all $(t,u)$. And this function is continuous because $f\in C^{(k_{1},k_{2},\cdots,k_{n})}(\mathbb{R}\times U,V)$, which completes the proof.
\end{proof}
	\begin{definition}
	Let $U=\Pi_{i=1}^{n}U_{i}$ and $U_i$ be an open set in $[0,\infty)^{m_i-b_i}\times \mathbb{R}^{b_i}$, then the 
	$C^{(k_{1},\cdots,k_{n})}$ compact open topology on $C^{(k_{1},\cdots,k_{n})}(U,V)$ is defined to be the initial topology with respect to the family of mappings
	\begin{equation}
		\begin{array}{cccc}
			\partial^{(l_{1},\cdots,l_{n})}: & C^{(k_{1},\cdots,k_{n})}(U,V) & \longrightarrow & C(U,V)\\
			& g & \longmapsto & \partial_{u}^{(l_{1},\cdots,l_{n})}g:=\frac{\partial^{l_{1}}}{\partial u_{1}^{l_{1}}}\cdots\frac{\partial^{l_{n}}}{\partial u_{n}^{l_{n}}}g
		\end{array}
	\end{equation}
	which consists every $\partial^{(l_{1},\cdots,l_{n})}$ such that $|l_i|\leq k_i\in\mathbb{Z}_{\geq 0}$. Here each $l_i$ is a multi- index.
\end{definition}

\begin{remark}\label{remark about U_ij}
	\begin{equation}
		\alpha\in C^{k_{0}}(\mathbb{R},C^{(k_{1},\cdots,k_{n})}(U,V))\Longleftrightarrow\partial^{(l_{1},\cdots,l_{n})}\circ\alpha\in C^{k_{0}}(\mathbb{R},C(U,V)),\ \forall |l_{i}|\leq k_i
	\end{equation}
\end{remark}
\begin{proof}
	The direction $\Rightarrow$ is obvious since $\partial^{(l_{1},\cdots,l_{n})}$ are smooth linear maps.
	
	For direction $\Leftarrow$, the case $k_0=0$ is just the definition of topology on $C^{(k_{1},\cdots,k_{n})}(U,V)$. If $k_0\geq 1$, then it is enough to show that $\partial^{(l_{1},\cdots,l_{n})}\circ\dfrac{d\alpha}{dr}=\dfrac{d}{dr}\left( \partial^{(l_{1},\cdots,l_{n})}\circ\alpha\right) $.
	For 
	\begin{equation}
		\partial^{(l_{1},\cdots,l_{n})}\left(\dfrac{\alpha(r+h)-\alpha(r)}{h}\right)=\dfrac{\left(\partial^{(l_{1},\cdots,l_{n})}\circ\alpha\right)(r+h)-\left(\partial^{(l_{1},\cdots,l_{n})}\circ\alpha\right)(r)}{h}
	\end{equation}
	we know that when $h\rightarrow 0$, the sequence $\partial^{(l_{1},\cdots,l_{n})}\left(\dfrac{\alpha(r+h)-\alpha(r)}{h}\right)$ converges to $\dfrac{d}{dr}\left( \partial^{(l_{1},\cdots,l_{n})}\circ\alpha(r)\right) $. Then by the definition of topology on $C^{(k_{1},\cdots,k_{n})}(U,V)$, we know that $\dfrac{\alpha(r+h)-\alpha(r)}{h}$ converges when $h\rightarrow 0$. Since $\partial^{(l_{1},\cdots,l_{n})}$ is continuous, we finally get 
	\begin{equation}
		\begin{array}{ccc}
			\left( \partial^{(l_{1},\cdots,l_{n})}\circ\dfrac{d\alpha}{dr}\right) (r)=\partial^{(l_{1},\cdots,l_{n})}\left(\underset{h\rightarrow0}{\lim}\dfrac{\alpha(r+h)-\alpha(r)}{h}\right) & = & \underset{h\rightarrow0}{\lim}\partial^{(l_{1},\cdots,l_{n})}\left(\dfrac{\alpha(r+h)-\alpha(r)}{h}\right)\\
			& = & \dfrac{d}{dr}\left( \partial^{(l_{1},\cdots,l_{n})}\circ\alpha(r)\right) 
		\end{array}
	\end{equation}
	So by $\partial^{(l_{1},\cdots,l_{n})}\circ\alpha\in C^{k_{0}}(\mathbb{R},C(U,V))$, we have $\alpha\in C^{k_{0}}(\mathbb{R},C^{(k_{1},\cdots,k_{n})}(U,V))$
\end{proof}
\begin{corollary}
		Let $U=\Pi_{i=1}^{n-1}U_{i}$ and $U_i$ be an open set in $[0,\infty)^{m_i-b_i}\times \mathbb{R}^{b_i}$, then $f\in C^{(k_{1},\cdots,k_{n})}(\mathbb{R}\times U,V)\Longleftrightarrow f^{\lor}\in C^{k_{1}}\left(\mathbb{R},C^{(k_{2},\cdots,k_{n})}(U,V)\right)$, where 
	\begin{equation}
		\begin{array}{cccc}
			f^{\lor}: & \mathbb{R} & \longrightarrow & C^{(k_{2},\cdots,k_{n})}(U,V)\\
			& r & \longmapsto & u\mapsto f(r,u)
		\end{array}
	\end{equation}
\end{corollary}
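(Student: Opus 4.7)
The plan is to mimic the proof of Theorem~\ref{general exponential law} essentially verbatim, substituting scalar derivative indices by multi-indices and letting Remark~\ref{remark about U_ij} carry the bookkeeping for the definition of $C^{k_{1}}\bigl(\mathbb{R},C^{(k_{2},\dots,k_{n})}(U,V)\bigr)$. As in the original proof, the argument splits into a base case $k_{1}=0$ handled by the classical exponential law for continuous maps, and an induction on $k_{1}$ driven by the identity $\tfrac{d f^{\lor}}{dr} = (\partial_{r} f)^{\lor}$.

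For the base case $k_{1}=0$, the definition of the initial topology on $C^{(k_{2},\dots,k_{n})}(U,V)$ makes $f^{\lor}\in C^{0}\bigl(\mathbb{R},C^{(k_{2},\dots,k_{n})}(U,V)\bigr)$ equivalent to $\partial^{(l_{2},\dots,l_{n})}\circ f^{\lor}\in C^{0}(\mathbb{R},C^{0}(U,V))$ for every admissible family of multi-indices $|l_{i}|\le k_{i}$. Since $\bigl(\partial_{u}^{(l_{2},\dots,l_{n})}f\bigr)^{\lor}=\partial^{(l_{2},\dots,l_{n})}\circ f^{\lor}$, Lemmas~\ref{exponential law for any topo space} and~\ref{exponential law for Y locally compact} (the latter applicable because $U=\Pi U_{i}$ is locally compact) translate this into $\partial_{u}^{(l_{2},\dots,l_{n})}f\in C^{0}(\mathbb{R}\times U,V)$ for all such multi-indices, i.e.\ $f\in C^{(0,k_{2},\dots,k_{n})}(\mathbb{R}\times U,V)$.

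For the induction step on $k_{1}$, the heart is the identity $\tfrac{d f^{\lor}}{dr}=(\partial_{r}f)^{\lor}$, valid whenever either $f\in C^{(k_{1},\dots,k_{n})}(\mathbb{R}\times U,V)$ or $f^{\lor}\in C^{k_{1}}(\mathbb{R},C^{(k_{2},\dots,k_{n})}(U,V))$ holds. The only non-trivial case is the former; one must show the difference-quotient curve
\begin{equation*}
c(t)=\begin{cases}\tfrac{f^{\lor}(r+t)-f^{\lor}(r)}{t},& t\ne 0,\\(\partial_{r}f)^{\lor}(r),& t=0,\end{cases}
\end{equation*}
is continuous into $C^{(k_{2},\dots,k_{n})}(U,V)$. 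By Remark~\ref{remark about U_ij} this reduces to the joint continuity in $(t,u)$ of $\bigl(\partial^{(l_{2},\dots,l_{n})}\circ c\bigr)^{\wedge}(t,u)=\int_{0}^{1}\partial_{r}\partial_{u}^{(l_{2},\dots,l_{n})}f(r+st,u)\,ds$ for every admissible multi-index, which follows from continuity of $\partial_{r}\partial_{u}^{(l_{2},\dots,l_{n})}f$. Iterating this identity on $k_{1}$, exactly as in Theorem~\ref{general exponential law}, closes the induction.

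The main obstacle, such as it is, is checking that every scalar-index manipulation used in Theorem~\ref{general exponential law}---in particular the commutation $\bigl(\partial_{u}^{(l_{2},\dots,l_{n})}f\bigr)^{\lor}=\partial^{(l_{2},\dots,l_{n})}\circ f^{\lor}$ and the interchange of $\lim_{h\to 0}$ with $\partial^{(l_{2},\dots,l_{n})}$ from Remark~\ref{remark about U_ij}---remains formally valid when each $l_{i}$ is a multi-index acting on the $i$-th factor $U_{i}$. Since partial derivatives in disjoint groups of variables commute with each other and with the $r$-variable, these manipulations are purely notational, and the corollary follows with no genuinely new analytic input beyond what was already developed for Theorem~\ref{general exponential law}.
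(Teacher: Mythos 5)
Your proposal is correct, but it takes a genuinely different route from the paper. The paper's own proof of this corollary is a short reduction argument: it treats Theorem \ref{general exponential law} as a black box, applies it to each of the intermediate regularity classes $C^{(k_{1},l_{2},\dots,l_{n})}(\mathbb{R}\times U,V)$ obtained by fixing a specific admissible multi-index $(l_{2},\dots,l_{n})$ with $|l_{i}|\le k_{i}$ (i.e.\ by refining $U=\Pi U_{i}$ into one-dimensional factors so that the scalar-index theorem applies), and then reassembles the conclusion via Remark \ref{remark about U_ij} to pass from ``$\partial^{(l_{2},\dots,l_{n})}\circ f^{\lor}\in C^{k_{1}}(\mathbb{R},C(U,V))$ for all $l$'' back to ``$f^{\lor}\in C^{k_{1}}(\mathbb{R},C^{(k_{2},\dots,k_{n})}(U,V))$''. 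You instead re-run the entire analytic argument of Theorem \ref{general exponential law} directly in the multi-index setting: the base case via Lemmas \ref{exponential law for any topo space} and \ref{exponential law for Y locally compact} (local compactness of $\Pi U_{i}$ is indeed available), and the induction on $k_{1}$ via the difference-quotient curve and the integral representation $\int_{0}^{1}\partial_{r}\partial_{u}^{(l_{2},\dots,l_{n})}f(r+st,u)\,ds$. Both arguments are sound. What your approach buys is self-containedness: you never have to make sense of applying the scalar-index theorem to a refined product decomposition, which is the one slightly delicate (and in the paper somewhat implicit) point of the reduction. What it costs is duplication of the analysis; the paper's proof is a few lines precisely because all the hard work was already done in the theorem. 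One small attribution quibble: the continuity of your curve $c$ into $C^{(k_{2},\dots,k_{n})}(U,V)$ reduces to continuity of the $\partial^{(l_{2},\dots,l_{n})}\circ c$ by the definition of the initial topology, not by Remark \ref{remark about U_ij} (which concerns $C^{k_{0}}$ curves with $k_{0}\ge 1$); this does not affect the validity of the argument.
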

\begin{proof}
	$\Leftarrow:$ If $f^{\lor}\in C^{k_{1}}\left(\mathbb{R},C^{(k_{2},\cdots,k_{n})}(U,V)\right)$, then 
	$f^{\lor}\in C^{k_{1}}\left(\mathbb{R},C^{(l_{2},\cdots,l_{n})}(U,V)\right)$ for all  $(l_2,\cdots,l_n)$ which satisfies $|l_i|\leq k_i$. Here each $l_i$ are multi-indexes. Then by theorem \ref{general exponential law}, $f\in C^{(k_{1},l_{2},\cdots,l_{n})}(\mathbb{R}\times U,V)$ for all $(l_2,\cdots,l_n)$ which satisfies $|l_i|\leq k_i$, hence $f\in C^{(k_{1},\cdots,k_{n})}(\mathbb{R}\times U,V)$.
	
	$\Rightarrow:$ If $f\in C^{(k_{1},\cdots,k_{n})}(\mathbb{R}\times U,V)$, then $f\in C^{(k_{1},l_{2},\cdots,l_{n})}(\mathbb{R}\times U,V)$ for all $(l_2,\cdots,l_n)$ which satisfies $|l_i|\leq k_i$. So by theorem \ref{general exponential law}, $f^{\lor}\in C^{k_{1}}\left(\mathbb{R},C^{(l_{2},\cdots,l_{n})}(U,V)\right)$ and then 
	$\partial^{(l_{2},\cdots,l_{n})}\circ f^{\lor}\in C^{k_{1}}\left(\mathbb{R},C(U,V)\right)$. So by remark \ref{remark about U_ij}, we have $f^{\lor}\in C^{k_{1}}\left(\mathbb{R},C^{(k_{2},\cdots,k_{n})}(U,V)\right)$.
\end{proof}
\begin{definition}
	Let $M$, $N$ be manifold with corners. If $N=N_1\times \cdots\times N_n$, 
	then we can define $C^{(k_1,\cdots,k_n)}$ maps from $N$ to $M$. A map $f:N\longrightarrow M$ is called $C^{(k_1,\cdots,k_n)}$, if under any local coordinate of type $(U_1\times \cdots U_n,\varphi_1\times\cdots\times\varphi_n)$ on $N$ and any local coordinate $(U_{M},\varphi_{M})$ on $M$, $\varphi_{M}\circ f\circ(\varphi_1\times\cdots\times\varphi_n)^{-1}$ is $C^{(k_1,\cdots,k_n)}$.
\end{definition}
\begin{definition}[topological linear space structure on $\Gamma_\gamma^k(N,E)$]\label{definition of the section space}
	Let $N,M$ be manifold with corners, $\pi:E\longrightarrow M$ be a smooth vector bundle which fibre is $V$, and $\gamma:N\longrightarrow M$ be a $C^k$ map, where $k$ may be multi-index. Then suppose $\{(W_\alpha,\varphi_\alpha)\}_{\alpha\in A}$ is an atlas of $M$,   
	such that there exists $\Psi_\alpha$ which is a local trivialization of $E$. Take $\{U_\beta)\}_{\beta\in B}$ to be a refinement of the open cover $\{\gamma^{-1}(W_\alpha)\}_{\alpha\in A}$ of $N$, such that $\{(U_\beta,\varphi_\beta)\}$ is a coordinate system on $N$ and $\gamma(U_\beta)\subseteq W_{\rho(\beta)}$. 
	The topological linear space structure on $\Gamma_\gamma^k(N,E)$ is defined by the closed linear embedding:
	\begin{equation}
		\begin{array}{cccc}
			i: & \Gamma_{\gamma}^{k}(N,E) & \longrightarrow & \underset{\beta}{\prod}C^{k}(U_{\beta},V)\\
			& s & \longmapsto & (Pr_{2}\circ\Psi_{\rho(\beta)}\circ(s|_{U_{\beta}}))
		\end{array}
	\end{equation}
\end{definition}
\begin{lemma}\label{curves to a bundle}
	$\alpha\in C^{k_{0}}(\mathbb{R},\Gamma_{\gamma}^{k}(N,E))$ is equivalent to $\alpha^{\wedge}\in\Gamma_{\gamma'}^{(k_{0},k)}(\mathbb{R}\times N,E)$, where $\gamma,\ N,\ E$ are defined as in definition \ref{definition of the section space}, $k$ may be multi-index and 
	\begin{equation}
		\begin{array}{cccc}
			\gamma': & \mathbb{R}\times N & \longrightarrow & M\\
			& (r,n) & \longmapsto & \gamma(n)
		\end{array}
	\end{equation}
\end{lemma}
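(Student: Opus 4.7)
The plan is to reduce both sides to their local-coordinate descriptions via the closed linear embedding of Definition \ref{definition of the section space}, and then apply the corollary of Theorem \ref{general exponential law} chart by chart.

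I would start by fixing the atlas $\{(W_\alpha,\varphi_\alpha)\}$ of $M$ with local trivializations $\Psi_\alpha$ of $E$ and the refinement $\{(U_\beta,\varphi_\beta)\}_{\beta\in B}$ of $\{\gamma^{-1}(W_\alpha)\}$ used to topologise $\Gamma_\gamma^k(N,E)$. For the section space along $\gamma'$, I would take the product charts $\{(\mathbb{R}\times U_\beta,\,\mathrm{id}_\mathbb{R}\times\varphi_\beta)\}$ on $\mathbb{R}\times N$; since $\gamma'(\mathbb{R}\times U_\beta)=\gamma(U_\beta)\subseteq W_{\rho(\beta)}$, this is a legitimate refinement for the definition of $\Gamma_{\gamma'}^{(k_0,k)}(\mathbb{R}\times N,E)$, and the associated embedding lands in $\prod_\beta C^{(k_0,k)}(\mathbb{R}\times U_\beta,V)$ via the same trivializations $\Psi_{\rho(\beta)}$.

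Next, by the initial-topology definition of $\Gamma_\gamma^k(N,E)$ together with Remark \ref{remark about U_ij} applied to the product $\prod_\beta C^k(U_\beta,V)$, the hypothesis $\alpha\in C^{k_0}(\mathbb{R},\Gamma_\gamma^k(N,E))$ is equivalent to requiring, for every $\beta$, that
\begin{equation*}
\alpha_\beta \;:=\; \mathrm{Pr}_2\circ\Psi_{\rho(\beta)}\circ(\,\cdot\,|_{U_\beta})\circ\alpha\,\colon\,\mathbb{R}\longrightarrow C^k(U_\beta,V)
\end{equation*}
belongs to $C^{k_0}(\mathbb{R},C^k(U_\beta,V))$. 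Dually, $\alpha^\wedge\in\Gamma_{\gamma'}^{(k_0,k)}(\mathbb{R}\times N,E)$ is, by definition, the requirement that for every $\beta$ the map $\mathrm{Pr}_2\circ\Psi_{\rho(\beta)}\circ(\alpha^\wedge|_{\mathbb{R}\times U_\beta})$ lies in $C^{(k_0,k)}(\mathbb{R}\times U_\beta,V)$. A direct unwinding, using $\alpha^\wedge(r,n)=\alpha(r)(n)$ and the fact that $\Psi_{\rho(\beta)}$ acts pointwise on sections, identifies this function with $\alpha_\beta^\wedge$.

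Both sides of the claimed equivalence therefore reduce, one $\beta$ at a time, to $\alpha_\beta\in C^{k_0}(\mathbb{R},C^k(U_\beta,V))\Longleftrightarrow \alpha_\beta^\wedge\in C^{(k_0,k)}(\mathbb{R}\times U_\beta,V)$, which is exactly the statement of the corollary of Theorem \ref{general exponential law} (applied with the first factor $\mathbb{R}$ and the remaining factor $U_\beta$). The main obstacle I expect is nothing more than chart-bookkeeping: verifying that the refinement $\{U_\beta\}$ chosen for $N$ really can be re-used via the product construction for $\mathbb{R}\times N$ without altering the defining embedding, and that $\Psi_{\rho(\beta)}$ genuinely commutes with the adjoint operation $\alpha\leftrightarrow\alpha^\wedge$. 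Once these identifications are made, the lemma follows by a single invocation of the exponential law applied coordinatewise.
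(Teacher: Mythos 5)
Your proposal is correct and follows essentially the same route as the paper: reduce both sides through the embedding $i$ of Definition \ref{definition of the section space} to the components $i_\beta\circ\alpha\in C^{k_0}(\mathbb{R},C^k(U_\beta,V))$, apply the exponential law of Theorem \ref{general exponential law} (resp.\ its corollary for multi-indices) chart by chart, and identify $(i_\beta\circ\alpha)^\wedge$ with $Pr_2\circ\Psi_{\rho(\beta)}\circ(\alpha^\wedge|_{\mathbb{R}\times U_\beta})$. Your additional appeal to Remark \ref{remark about U_ij} to justify that a $C^{k_0}$ curve into the closed subspace is detected componentwise is a harmless (indeed slightly more careful) elaboration of the step the paper dispatches with ``by definition.''
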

\begin{proof}
	By definition \ref{definition of the section space}, 
	\begin{equation}
		\alpha\in C^{k_{0}}(\mathbb{R},\Gamma_{\gamma}^{k}(N,E))\Longleftrightarrow i_{\beta}\circ\alpha\in C^{k_{0}}(\mathbb{R},C^{k}(U_{\beta},V),\ \forall\beta\in B
	\end{equation}
	where $U_\beta$ is defined as in definition \ref{definition of the section space}, and $i_{\beta}\circ\alpha:r\longmapsto(n\mapsto Pr_{2}\circ\Psi_{\rho(\beta)}\circ\alpha(r)(n))$. Then by theorem \ref{general exponential law},
	\begin{equation}
		i_{\beta}\circ\alpha\in C^{k_{0}}(\mathbb{R},C^{k}(U_{\beta},V)\Longleftrightarrow\left(i_{\beta}\circ\alpha\right)^{\wedge}\in C^{(k_{0},k)}(\mathbb{R}\times U_{\beta},V)
	\end{equation}
	We also have
	\begin{equation}
		\alpha^{\wedge}\in\Gamma_{\gamma'}^{(k_{0},k)}(\mathbb{R}\times N,E)\Longleftrightarrow Pr_{2}\circ\Psi_{\rho(\beta)}\circ(\alpha^{\wedge}|_{\mathbb{R}\times U_{\beta}})\in C^{(k_{0},k)}(\mathbb{R}\times U_{\beta},V),\ \forall\beta\in B
	\end{equation}
	Finally, notice that $Pr_{2}\circ\Psi_{\rho(\beta)}\circ(\alpha^{\wedge}|_{\mathbb{R}\times U_{\beta}})(r,n)=\left(i_{\beta}\circ\alpha\right)^{\wedge}(r,n)=Pr_{2}\circ\Psi_{\rho(\beta)}(\alpha(r)(n))$ which completes the proof.
\end{proof}
\begin{definition}[definition of $C^{(k_1,\cdots,k_n)}$ compact open topology]
	Let $N=N_1\times \cdots\times N_n$ be a manifold with corners, and $M$ a manifold, $k_i\geq 0$. For $f\in C^{(k_1,\cdots,k_n)}(N,M)$, coordinate chart $(U,\varphi)$ on $N$, $(V,\psi)$ on $M$, $K\subseteq U$ compact subset with $f(K)\in V$, $\varepsilon>0$, define the set 
\begin{equation}
\begin{array}{ccc}
	 & U^{k}(f,\varphi,U,\psi,V,K,\varepsilon):=\{g\in C^{(k_{1},\cdots,k_{n})}(N,M)|g(K)\subseteq V,\\
	& \underset{|l_{i}|\leq k_{i}}{\max}\underset{x\in\varphi(K)}{\sup}||\partial^{(l_{1},\cdots,l_{n})}(\psi\circ g\circ\varphi^{-1})(x)-\partial^{(l_{1},\cdots,l_{n})}(\psi\circ f\circ\varphi^{-1})(x)||<\varepsilon\}\\
\end{array}
\end{equation}
	where $||\bullet||$ is the Euclidean norm, the compact open $C^{(k_1,\cdots,k_n)}$ topology is the topology generated by the set
	\begin{equation}
		\begin{array}{ccc}
			&\{U^{k}(f,\varphi,U,\psi,V,K,\varepsilon)|f\in C^{(k_{1},\cdots,k_{n})}(N,M),(U,\varphi)\ and\ (V,\psi)\ charts\ of\ N\ and\ M,\\
			&K\subseteq U\ compact\ with\ f(K)\subseteq V,\varepsilon>0\}\\
		\end{array}
	\end{equation}
	as a subbasis.
\end{definition}
\begin{remark}\label{convergence of g(f_n)}
	Let $f_n\in C^{(k_{1},\cdots,k_{n})}(N,M)$ be a sequence that converges to $f\in C^{(k_{1},\cdots,k_{n})}(N,M)$, $g\in C^\infty (M,M')$, then $g\circ f_n$ converges to $g\circ f$.
\end{remark}
\begin{proof}
	Let $(U,\varphi)$ be a chart of $N$, $(V',\psi')$  be a chart of $M'$. 
	We only need to prove $\partial^{(l_{1},\cdots,l_{n})}(\psi'\circ g\circ f_n\circ\varphi^{-1})$ converges uniformly to $\partial^{(l_{1},\cdots,l_{n})}(\psi'\circ g\circ f\circ\varphi^{-1})$ on a compact neighborhood $K_x$ of any $x\in N$ for any $(l_{1},\cdots,l_{n})$ satisfies $|l_i|<k_i$.
	
	Choose a coordinate chart $(V,\psi)$ near $f(x)$, and choose $K_x\subseteq N$ such that $f(K_x)\in V, g(f(K_x))\in V'$. Because $f_n\rightarrow  f$ uniformly on $K_x$, we have an integer $Z>0$ such that $\forall n>Z$, $f_n(K_x) \subseteq V, g(f_n(K_x))\in V'$, and $\psi(f_n(K_x))\subseteq \overline{\psi(f(K_x))_\varepsilon}\subseteq \mathbb{R}^{dim\ M}$. Here $\psi(f(K_x))_\varepsilon:=\{y\in \mathbb{R}^{dim\ M}|d(y,\psi(f(K_x)))<\varepsilon\}$ is the $\varepsilon$ neighborhood of $\psi(f(K_x))$.
	
	Then by the smoothness of $g$ and the compactness of $\overline{\psi(f(K_{x}))_{\varepsilon}}$, we get
	 \begin{equation}
		\underset{|l_{i}|\leq k_{i}}{\max}\underset{y\in\overline{\psi(f(K_{x}))_{\varepsilon}}}{\sup}\ensuremath{\partial^{(l_{1},\cdots,l_{n})}(\psi'\circ g\circ\psi^{-1})(y)}<C
	\end{equation}
	And then by the chain rule and the convergence of $f_n$, we have
	\begin{equation}
	\underset{x\in K_{x}}{\sup}\partial^{(l_{1},\cdots,l_{n})}\left((\psi'\circ g\circ\psi^{-1})\circ(\psi\circ f_{n}\circ\varphi^{-1}-\psi\circ f\circ\varphi^{-1})\right)(x)\longrightarrow 0
	\end{equation} 
	as $n\longrightarrow \infty$, hence $\partial^{(l_{1},\cdots,l_{n})}(\psi'\circ g\circ f_n\circ\varphi^{-1})$ converges uniformly to $\partial^{(l_{1},\cdots,l_{n})}(\psi'\circ g\circ f\circ\varphi^{-1})$ on $K_x$, for arbitrary $x\in N$.
	
\end{proof}
\begin{theorem}\label{banach manifold consists of C^k maps}
	Let $N$ be an compact manifold with corners, $M$ a manifold, then $C^{(k_1,\cdots,k_n)}(N,M)$ is a $C^\infty$ Banach or $Fr\acute{e}chet$ manifold.
\end{theorem}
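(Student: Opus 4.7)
The plan is to construct an atlas on $C^{(k_1,\ldots,k_n)}(N,M)$ whose model space at $f$ is the section space $\Gamma_f^{(k_1,\ldots,k_n)}(N, f^*TM)$ of Definition~\ref{definition of the section space}. Since $N$ is compact, these section spaces are Banach spaces when every $k_i$ is finite and Fr\'echet spaces otherwise.

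First, I would fix a complete Riemannian metric on $M$ with exponential map $\exp$. For each $f\in C^{(k_1,\ldots,k_n)}(N,M)$, compactness of $N$ and continuity of $f$ produce $\epsilon_f>0$ such that $\exp_{f(x)}$ restricts to a diffeomorphism of the $\epsilon_f$-ball in $T_{f(x)}M$ onto an open neighborhood of $f(x)$, uniformly in $x\in N$. Let $U_f := \{s\in \Gamma_f^{(k_1,\ldots,k_n)}(N, f^*TM) : \sup_{x\in N}\|s(x)\| < \epsilon_f\}$, an open subset of the model space, and define the chart $\varphi_f : U_f \to C^{(k_1,\ldots,k_n)}(N,M)$ by $\varphi_f(s)(x) := \exp_{f(x)}(s(x))$. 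The map $\varphi_f$ lands in $C^{(k_1,\ldots,k_n)}(N,M)$ because $\exp$ is smooth; injectivity and the explicit inverse $\varphi_f^{-1}(h)(x) = \exp_{f(x)}^{-1}(h(x))$ follow from the uniform diffeomorphism property; and bicontinuity in the compact-open $C^{(k_1,\ldots,k_n)}$ topology follows from Remark~\ref{convergence of g(f_n)}, since post-composition with the smooth maps $\exp$ and $\exp^{-1}$ preserves convergence of all partial derivatives of order bounded by $k$.

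The central step is showing that each transition $\varphi_g^{-1}\circ \varphi_f$ is $C^\infty$ between the model Banach (or Fr\'echet) spaces. The transition is the pointwise rule $s\mapsto t$ with $t(x) = \exp_{g(x)}^{-1}\bigl(\exp_{f(x)}(s(x))\bigr)$, induced by a fiber-preserving, fiberwise-smooth map $\Psi$ between vector bundles over $N$ whose coefficients are as smooth as $f$ and $g$. To verify smoothness of the induced operator on section spaces I would invoke the curve criterion: it suffices to send every $C^\infty$ curve $c:\mathbb{R}\to \Gamma_f^{(k_1,\ldots,k_n)}(N, f^*TM)$ to a smooth curve in $\Gamma_g^{(k_1,\ldots,k_n)}(N,g^*TM)$. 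By Lemma~\ref{curves to a bundle}, smoothness of $c$ is equivalent to $c^\wedge$ being a $C^{(\infty,k_1,\ldots,k_n)}$ section along the extended map $\mathbb{R}\times N\to M$; postcomposing pointwise with $\Psi$ preserves this regularity by the chain rule, and Lemma~\ref{curves to a bundle} applied in reverse yields smoothness of the image curve. Differentiating in the curve parameter and iterating gives derivatives of all orders.

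The hard part will be this transition-smoothness step: one must carefully track how the mixed regularity indices $(k_1,\ldots,k_n)$ interact with pointwise postcomposition by a smooth fiber-preserving map, and then bootstrap to derivatives of all orders in the curve parameter without losing regularity. The tools of Section~2 — particularly Theorem~\ref{general exponential law}, Lemma~\ref{Parallel transport of smooth vector fields}, and Lemma~\ref{curves to a bundle} — are tailored to this bookkeeping, but the indexing must be handled with care so that no regularity is lost in passing between product-domain maps and curves in function spaces.
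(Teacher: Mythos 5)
Your proposal is correct and follows essentially the same route as the paper: exponential-map charts modelled on the section spaces of Definition~\ref{definition of the section space}, chart (bi)continuity via postcomposition with smooth maps, and transition smoothness via the curve criterion combined with Lemma~\ref{curves to a bundle} to translate smooth curves in section spaces into $C^{(\infty,k_1,\ldots,k_n)}$ maps on $\mathbb{R}\times N$. The one substantive difference is where you centre the charts: the paper takes $\gamma\in C^{\infty}(N,M)$, so that the fibre-preserving map $\varphi_{\gamma_1,\gamma_2}$ is genuinely $C^\infty$ and postcomposition preserves $C^{(\infty,k_1,\ldots,k_n)}$ regularity immediately; the price is that one must know (and the paper leaves implicit) that every $C^{(k_1,\ldots,k_n)}$ map lies in some $U_\gamma$ with $\gamma$ smooth, i.e.\ that smooth maps are uniformly dense. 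You instead centre at arbitrary $f\in C^{(k_1,\ldots,k_n)}(N,M)$, so the covering is trivial, but your $\Psi$ is then only $C^{(k_1,\ldots,k_n)}$ in the base variable and smooth in the fibre variable, and the chain-rule bookkeeping you flag as the hard part really is harder: you must check that postcomposition with a $C^{(k,\infty)}$ fibrewise map still carries $C^{(\infty,k)}$ maps to $C^{(\infty,k)}$ maps (it does, since every term in the Leibniz/chain-rule expansion of $\partial_r^a\partial_x^b$ with $|b|\leq k$ uses at most $|b|$ base-derivatives of $\Psi$, but this deserves an explicit lemma). Either fix works; the paper's choice shifts the burden to a density statement, yours to a sharper composition lemma.
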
	
\begin{proof}
	Let $O\subseteq TM$ be an open neighborhood of the zero section, such that the map $(\pi_M\times Exp)|_{O}$ is an embedding, where $Exp$ is the exponential map, and $\pi_M\times Exp:(m,v)\mapsto (m,Exp_m(v))\in M\times M$.
	$\forall \gamma\in C^{\infty}(N,M)$, let
	\begin{equation}
		\begin{array}{cccc}
			exp_{*}: & \Gamma_{\gamma}^{(k_{1},\cdots,k_{n})}(N,O)\subseteq\Gamma_{\gamma}^{(k_{1},\cdots,k_{n})}(N,TM) & \longrightarrow & U_{\gamma}\subseteq C^{(k_{1},\cdots,k_{n})}(N,M)\\
			& x\mapsto\beta(x) & \longmapsto & x\mapsto Exp_{\gamma(x)}\beta(x)
		\end{array}
	\end{equation}
	\begin{equation}
		\begin{array}{cccc}
			exp_{*}^{-1}: & U_{\gamma}\subseteq C^{(k_{1},\cdots,k_{n})}(N,M) & \longrightarrow & \Gamma_{\gamma}^{(k_{1},\cdots,k_{n})}(N,TM)\\
			& x\mapsto f(x) & \longmapsto & x\mapsto Exp_{\gamma(x)}^{-1}f(x)
		\end{array}
	\end{equation}
	Here $U_\gamma$ is the image of $\Gamma_{\gamma}^{(k_{1},\cdots,k_{n})}(N,O)$ under $exp_{*}$. $U_\gamma=\{f\in C^{(k_{1},\cdots,k_{n})}(N,M)|Exp_{\gamma(x)}^{-1}f(x)\in O, \forall x\in N\}$
	Now we are going to prove that $exp_{*}$ is a homeomorphism.
	Let $\{\beta_{i}\}_{i=1}^{\infty}$ be a sequence that converge to $\beta_0$. Then because $Exp$ is $C^\infty$, we have $Exp\circ \beta_{i}$ converges to $Exp\circ\beta_0$. So $exp_*$ is continuous.
	For $exp_{*}^{-1}$, it can be written as the composition of two continuous maps:
	\begin{equation}
		\begin{array}{cccccc}
			exp_{\gamma}^{-1}: & U_{\gamma}\subseteq C^{(k_{1},\cdots,k_{n})}(N,M) & \longrightarrow & C^{(k_{1},\cdots,k_{n})}(N,M\times M) & \overset{\left(\pi_{M}\times Exp\right)^{-1}_{*}}{\longrightarrow} & \Gamma_{\gamma}^{(k_{1},\cdots,k_{n})}(N,TM)\\
			& x\mapsto f(x) & \longmapsto & x\mapsto(\gamma(x),f(x)) &  & x\mapsto Exp_{\gamma(x)}^{-1}f(x)
		\end{array}
	\end{equation}
	Now we are going to prove the smoothness of the transition map.
	\begin{equation}
		\begin{array}{cccc}
			exp_{\gamma_{2}}^{-1}\circ exp_{\gamma_{1}}: & U\subseteq\Gamma_{\gamma_{1}}^{(k_{1},\cdots,k_{n})}(N,TM) & \longrightarrow & \Gamma_{\gamma_{2}}^{(k_{1},\cdots,k_{n})}(N,TM)\\
			& x\mapsto f(x) & \longmapsto & x\mapsto Exp_{\gamma_{2}(x)}^{-1}Exp_{\gamma_{1}(x)}f(x)
		\end{array}
	\end{equation}
	Use the isomorphism of Banach (or $Fr\acute{e}chet$) space
	\begin{equation}
		\begin{array}{cccc}
			iso: & \Gamma_{\gamma}^{(k_{1},\cdots,k_{n})}(N,TM) & \simeq & \Gamma_{\gamma\times id_{N}}^{(k_{1},\cdots,k_{n})}(N,TM\times N)\\
			& f & \longmapsto & f\times id_{N}
		\end{array}
	\end{equation}
	the smoothness of transition map is equal to the smoothness of
	\begin{equation}
		\begin{array}{cccc}
			\Phi'_{\gamma_{1},\gamma_{2}}: & U'\subseteq\Gamma_{\gamma_{1}\times id_{N}}^{(k_{1},\cdots,k_{n})}(N,TM\times N) & \longrightarrow & \Gamma_{\gamma_{2}\times id_{N}}^{(k_{1},\cdots,k_{n})}(N,TM\times N)\\
			& x\mapsto\left(f(x),x\right) & \longmapsto & x\mapsto\left(Exp_{\gamma_{2}(x)}^{-1}Exp_{\gamma_{1}(x)}f(x),x\right)
		\end{array}
	\end{equation}
	So let
	\begin{equation}
		\begin{array}{cccc}
			\varphi_{\gamma_{1},\gamma_{2}}: & U_{12}\subseteq TM\times N & \longrightarrow & TM\times N\\
			& (m,v,x) & \longmapsto & \left(Exp_{\gamma_{2}(x)}^{-1}Exp_{m}v,x\right)
		\end{array}
	\end{equation}
	we have $\Phi'_{\gamma_{1},\gamma_{2}}(f\times id_{N})=\varphi_{\gamma_{1},\gamma_{2}}\circ(f\times id_{N})$.
	
	Notice that $\varphi_{\gamma_{1},\gamma_{2}}$ is $C^\infty$ because it is the composition of the following smooth maps:
	\begin{equation}
		\begin{array}{ccccccc}
			TM\times N & \overset{Exp\times id_{N}}{\longrightarrow} & M\times N & \overset{\gamma_{2}\times id_{M}\times id_{N}}{\longrightarrow} & M\times M\times N & \overset{\left(\pi_{M}\times Exp\right)^{-1}\times id_{N}}{\longrightarrow} & TM\times N\\
			(m,v,x) & \longmapsto & (Exp_{m}v,x)\\
			&  & (m,x) & \longmapsto & (\gamma_{2}(x),m,x) & \longmapsto & \left(Exp_{\gamma_{2}(a)}^{-1}m,x\right)
		\end{array}
	\end{equation}	
	We only need to prove $\Phi'_{\gamma_{1},\gamma_{2}}$ maps $C^\infty$ curves into $C^\infty $ curves. By lemma \ref{curves to a bundle}, 
	a curve $\alpha\in C^{\infty}\left(\mathbb{R},\Gamma_{\gamma_{1}\times id_{N}}^{(k_{1},\cdots,k_{n})}(N,TM\times N)\right)$ if and only if $\alpha^{\wedge}\in C^{(\infty,k_{1},\cdots k_{n})}(\mathbb{R}\times N,TM\times N)$ and $\pi\circ\alpha ^\wedge(r,x)=(\gamma(x),x)$.
	Then because  $(\Phi'_{\gamma_{1},\gamma_{2}}\circ\alpha)^\wedge=\varphi_{\gamma_{1},\gamma_{2}}\circ \alpha^\wedge $, and $\varphi_{\gamma_{1},\gamma_{2}}$ is $C^\infty$, we know that $(\Phi'_{\gamma_{1},\gamma_{2}}\circ\alpha)^\wedge$ is $C^{(\infty,k_{1},\cdots k_{n})}$, which completes the proof.
\end{proof}

\begin{remark}
	Fix a point $p\in N$ and $q\in M$, let $C_*^{k}(N,M):=\{f\in C^{k}(N,M)|f(p)=q\}$, then $C_*^{k}(N,M)$ is a embedded submanifold of $C^{k}(N,M)$, where $k$ may be multi-index.
\end{remark}
\begin{proof}
	For any $f\in C_*^{k}(N,M)$, choose a coordinate chart $(U_f,exp_f^{-1})$ of the total space $C^{k}(N,M)$, where 
	\begin{equation}
		\begin{array}{cccc}
			exp_{f}^{-1}: & U_{f}\subseteq C^{k}(N,M) & \longrightarrow & \Gamma_{f}^{k}(N,TM)\\
			& x\mapsto\beta(x) & \longmapsto & x\mapsto Exp_{f(x)}^{-1}\beta(x)
		\end{array}
	\end{equation}
	And then we have $C_*^{k}(N,M)\cap U_f =\{g\in U_f|exp_f^{-1}(g)(p)=(q,0)\in TM\}$. 
	
	Therefore $exp_f^{-1}\left( C_*^{k}(N,M)\cap U_f\right)=V\cap exp_f^{-1}(U_f)$, where $V:=\{h\in \Gamma_{f}^{k}(N,TM)|h(p)=(q,0)\}$ is a closed subspace of $\Gamma_{f}^{k}(N,TM)$. So $C_*^{k}(N,M)$ is a embedded submanifold which locally homeomorphic to an open set in $V$.
\end{proof}

\begin{theorem}\label{exponential law on manifold}
	Let $M$ be a manifold, and $N$ be a compact manifold with corners, then $\alpha\in C^{\infty}(\mathbb{R},C^{k}(N,M))$ if and only if $\alpha^\wedge\in C^{(\infty,k)}(\mathbb{R}\times N,M)$, where $k$ may be multi-index $(k_1,\cdots,k_n)$ and $\alpha^\wedge:(r,n)\mapsto \alpha(r)(n)$. 
\end{theorem}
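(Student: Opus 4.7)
The plan is to reduce Theorem \ref{exponential law on manifold} to Lemma \ref{curves to a bundle} by working locally in each chart of the manifold structure on $C^k(N,M)$ built in Theorem \ref{banach manifold consists of C^k maps}. Both conditions, $\alpha\in C^\infty(\mathbb{R},C^k(N,M))$ and $\alpha^\wedge\in C^{(\infty,k)}(\mathbb{R}\times N,M)$, are local in $r\in\mathbb{R}$, so it suffices to prove the equivalence on an open neighbourhood of an arbitrary fixed $r_0\in\mathbb{R}$.

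Fix $r_0$ and set $f_0:=\alpha(r_0)$. I would first choose $\gamma\in C^\infty(N,M)$ so close to $f_0$ in the $C^k$-topology that $f_0\in U_\gamma$; density of $C^\infty(N,M)$ in $C^k(N,M)$ for compact $N$ and arbitrary $M$ follows from a Whitney embedding of $M$ into some $\mathbb{R}^L$, coordinatewise mollification, and a tubular-neighbourhood retraction. For the $(\Rightarrow)$ direction, continuity of the smooth curve $\alpha$ produces an open interval $J\ni r_0$ with $\alpha(J)\subseteq U_\gamma$; for $(\Leftarrow)$ one first uses the $k_0=0$ case, i.e.\ continuity of $r\mapsto\alpha(r)$ into $C^k(N,M)$, which follows from compactness of $N$ together with Remark \ref{convergence of g(f_n)} and Lemma \ref{exponential law for Y locally compact}, to obtain the same $J$. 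On $J$ the chart identifies $\alpha|_J$ with
\[
\tilde\alpha:=\exp_*^{-1}\circ\alpha|_J\colon J\longrightarrow\Gamma_\gamma^k(N,TM),\qquad \tilde\alpha(r)(n)=\operatorname{Exp}_{\gamma(n)}^{-1}\alpha(r)(n),
\]
and by the definition of the manifold structure, $\alpha|_J\in C^\infty(J,C^k(N,M))\iff\tilde\alpha\in C^\infty(J,\Gamma_\gamma^k(N,TM))$.

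Next I would apply Lemma \ref{curves to a bundle} to the smooth bundle $TM\to M$ with reference map $\gamma'(r,n):=\gamma(n)$, obtaining $\tilde\alpha\in C^\infty(J,\Gamma_\gamma^k(N,TM))\iff\tilde\alpha^\wedge\in\Gamma_{\gamma'}^{(\infty,k)}(J\times N,TM)$. To close the loop, observe that $\tilde\alpha^\wedge(r,n)=\operatorname{Exp}_{\gamma(n)}^{-1}\alpha^\wedge(r,n)$ and, conversely, $\alpha^\wedge(r,n)=\operatorname{Exp}_{\gamma(n)}\tilde\alpha^\wedge(r,n)$; since $\pi_M\times\operatorname{Exp}$ and its inverse are smooth diffeomorphisms between open subsets of $TM$ and $M\times M$, post-composition with either preserves mixed $C^{(\infty,k)}$ regularity on $J\times N$ by a chain-rule computation in local coordinates of $M$. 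Hence $\tilde\alpha^\wedge$ is $C^{(\infty,k)}$ on $J\times N$ if and only if $\alpha^\wedge$ is, and chaining the three equivalences yields the theorem on $J\times N$; arbitrariness of $r_0$ gives the global statement.

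The main obstacle, as I see it, is the very first step: producing a smooth $\gamma$ close to $f_0$ in the $C^k$-topology, i.e.\ density of $C^\infty(N,M)$ in $C^k(N,M)$ when $N$ is a compact manifold with corners. The Whitney-plus-tubular-neighbourhood argument goes through, but one must take care near the corners so that the mollifier is well defined after being pulled back to $N$. A clean alternative, should that verification prove awkward, is to enlarge the class of admissible chart centres in Theorem \ref{banach manifold consists of C^k maps} from $C^\infty(N,M)$ to $C^k(N,M)$ — then $f_0$ itself is a legitimate centre — at the price of re-verifying smoothness of the transition maps, a check that again reduces to the vector-bundle exponential law by the same argument sketched above.
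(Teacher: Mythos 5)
Your proposal follows essentially the same route as the paper: localize in $r$, pass to a chart $\exp_\gamma^{-1}$ centred at a smooth $\gamma$, apply Lemma \ref{curves to a bundle} to the bundle $TM\to M$, and transfer between $\tilde\alpha^\wedge$ and $\alpha^\wedge$ via $Exp$ and $(\pi_M\times Exp)^{-1}$. Two remarks on where the write-ups diverge. First, in the $(\Leftarrow)$ direction the paper does not first establish continuity of $r\mapsto\alpha(r)$ into $C^k(N,M)$; instead it forms $h(r,n)=(\gamma(n),\alpha^\wedge(r,n))$, notes $h$ is continuous with $h(\{s\}\times N)\subseteq(\pi_M\times Exp)(O)$, and extracts the interval $J$ by the tube lemma from compactness of $N$. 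This is worth adopting, because the tools you cite for your version of this step do not deliver it as stated: Lemma \ref{exponential law for Y locally compact} goes in the direction $C(X,C(Y,Z))\Rightarrow C(X\times Y,Z)$, which is the opposite of what you need, and Remark \ref{convergence of g(f_n)} concerns post-composition with a fixed smooth map. Your claim is true, but proving it honestly amounts to running Theorem \ref{general exponential law} chart by chart, whereas the paper's device avoids the issue entirely. Second, the density of $C^\infty(N,M)$ in $C^k(N,M)$ (so that some $U_\gamma$ with smooth centre contains $\alpha(r_0)$) is indeed needed and is simply asserted implicitly in the paper's proof; your Whitney-plus-tubular-neighbourhood argument, with care at the corners, or your alternative of admitting $C^k$ chart centres, is a legitimate way to fill that gap, and you are right to flag it as the one point requiring extra work.
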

\begin{proof}
	Let $\alpha \in C^\infty\left(\mathbb{R},C^k(N,M) \right) $. 
	Let $O\subseteq TM$ be a neighborhood of the zero section, such that $\pi_{M}\times Exp$ is a diffeomorphism on $O$, where
	\begin{equation}
		\begin{array}{cccc}
			\pi_{M}\times Exp: & TM & \longrightarrow & M\times M\\
			& (m,v) & \longmapsto & (m,Exp(m,v))
		\end{array}
	\end{equation}
	For each $s\in \mathbb{R},\ \exists\delta>0$, such that $\alpha((s-\delta,s+\delta ))\subseteq U_\gamma\subseteq C^k(N,M)$, where $(U_\gamma,\varphi_\gamma)$ is a local coordinate chart near $\alpha(s)$. So $\varphi_{\gamma}\circ\alpha\in C^{\infty}((s-\delta,s+\delta),\Gamma_{\gamma}^{k}(N,O))$, which means $\left(\varphi_{\gamma}\circ\alpha\right)^{\wedge}\in\Gamma_{\gamma'}^{(\infty,k)}((s-\delta,s+\delta)\times N,O))$, here $\gamma':(s,n)\mapsto\gamma(n)$. But we have $\left(\varphi_{\gamma}\circ\alpha\right)^{\wedge}:(r,n)\longmapsto Exp_{\gamma(n)}^{-1}\alpha(r)(n)$, so $\alpha^{\wedge}(r,n)=Exp\circ\left(\varphi_{\gamma}\circ\alpha\right)^{\wedge}\in C^{(\infty,k)}$.
	
	On the other hand, if $\alpha^\wedge\in C^{(\infty,k)}(\mathbb{R}\times N,M)$, 
	then for any given $s\in \mathbb{R}$, the map $\alpha(s):n\rightarrow \alpha(s)(n)$ is in $C^{k}(N,M)$, so we can choose a coordinate $(U_\gamma,\varphi_\gamma)$ near $\alpha(s)$. Here $\gamma\in C^\infty(N,M)$ and $U_\gamma$ is defined as in theorem \ref{banach manifold consists of C^k maps}. because $\pi_M\times Exp$ is a diffeomorphism on $O\subseteq TM$, it is an open map, hence $(\pi_M\times Exp)(O)$ is an open subset in $M\times M$. Let $h$ be an $C^{(\infty,k)}$ map:
	\begin{equation}
		\begin{array}{cccc}
			h: & \mathbb{R}\times N & \longrightarrow & M\times M\\
			& (r,n) & \longrightarrow & (\gamma(n),\alpha(r)(n))
		\end{array}
	\end{equation}
	then $h^{-1}\left((\pi_{M}\times Exp)(O)\right)\subseteq\mathbb{R}\times N$ is an open set. And because $\alpha(s)\in U_\gamma$, we know that $Exp^{-1}_{\gamma(n)}\alpha(s)(n)\in O$. And by $(\pi_{M}\times Exp)(Exp^{-1}_{\gamma(n)}\alpha(s)(n))=(\gamma(n),\alpha(s)(n))$, we have $\{s\}\times N\subseteq h^{-1}\left((\pi_{M}\times Exp)(O)\right)$. By the compactness of $N$, we have $\delta>0$, such that $h\left((s-\delta,s+\delta)\times N\right)\subseteq(\pi_{M}\times Exp)(O)$. 
	Since $\pi_{M}\times Exp$ is a diffeomorphism on $O$, we know that $\left( \pi_{M}\times Exp\right) ^{-1}\circ h\in C^{(\infty,k)}((s-\delta,s+\delta)\times N,O)$. And notice that $\left( \varphi_{\gamma}\circ \alpha\right) ^\wedge(r,n)=Exp_{\gamma(n)}^{-1}\alpha(r)(n)=\left( \pi_{M}\times Exp\right) ^{-1}\circ h(r,n)$, so $\left( \varphi_{\gamma}\circ \alpha\right) ^\wedge\in C^{(\infty,k)}$. Hence $\varphi_{\gamma}\circ \alpha\in C^{\infty}((s-\delta,s+\delta),\Gamma_{\gamma}^k(N,O))$, which means $\alpha$ is $C^{\infty}$ near $s$.
\end{proof}

\section{Path space of a manifold}
\begin{theorem}\label{curve P is smooth}
	$P:C_{p}^{k}(I,M)\longrightarrow C^{k-1}(I,T_{p}M)$ is a $C^{\infty}$ map for $k\geq 1$.
\end{theorem}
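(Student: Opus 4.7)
The plan is to prove $P$ is smooth via the convenient-calculus criterion: a map between Banach (or Fr\'echet) manifolds is smooth if and only if it sends every smooth curve to a smooth curve, the criterion already exploited in the proof of Theorem \ref{banach manifold consists of C^k maps} when checking smoothness of the transition maps. So I fix an arbitrary $\alpha \in C^\infty(\mathbb{R}, C^k_p(I, M))$ and aim to show $P \circ \alpha \in C^\infty(\mathbb{R}, C^{k-1}(I, T_pM))$. Since the target is a genuine Banach (or Fr\'echet) space, one can even compute a candidate derivative afterwards, but only smoothness is needed here.

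By Theorem \ref{exponential law on manifold}, such an $\alpha$ corresponds to a map $\alpha^\wedge : \mathbb{R} \times I \to M$ of class $C^{(\infty, k)}$ satisfying $\alpha^\wedge(r, 0) = p$ for every $r$. Differentiating in $t$ yields a vector field $X(r, t) := \partial_t \alpha^\wedge(r, t)$ along $\alpha^\wedge$ of class $C^{(\infty, k-1)}$. Applying Lemma \ref{Parallel transport of smooth vector fields} with $n=1$, viewing $\alpha^\wedge$ at the reduced regularity $C^{(\infty, k-1)}$ (legitimate because $\partial_t \alpha^\wedge$ is continuous thanks to $k \geq 1$), one concludes that $(r, t) \mapsto P^{t \to 0}_{(\alpha^\wedge)_r}(X(r, t))$ is a $C^{(\infty, k-1)}$ vector field along the constant map $(r, t) \mapsto \alpha^\wedge(r,0) = p$. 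Since every value lies in the single fibre $T_p M$, this field is just a $C^{(\infty, k-1)}$ map $\mathbb{R} \times I \to T_p M$, and unwinding definitions it is precisely $(P \circ \alpha)^\wedge$.

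It remains to re-pack this along the first variable: by Theorem \ref{general exponential law} applied with target the finite-dimensional vector space $T_p M$, the fact that $(P \circ \alpha)^\wedge \in C^{(\infty, k-1)}(\mathbb{R} \times I, T_p M)$ translates to $P \circ \alpha \in C^\infty(\mathbb{R}, C^{k-1}(I, T_pM))$, as desired. Since $\alpha$ was arbitrary, $P$ is smooth.

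The substantive content of the argument is Lemma \ref{Parallel transport of smooth vector fields}; every other step is a bookkeeping exercise in the exponential law and in the curves-to-curves criterion. The one point that requires attention is the regularity multi-index: one cannot apply the parallel-transport lemma at level $C^{(\infty, k)}$ because $X = \partial_t \alpha^\wedge$ only has regularity $C^{(\infty, k-1)}$, so both $\alpha^\wedge$ and $X$ must be viewed at this common reduced regularity. That is exactly why the target of $P$ loses one derivative and is $C^{k-1}(I, T_pM)$ rather than $C^k(I, T_pM)$.
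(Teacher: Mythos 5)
Your proposal is correct and follows essentially the same route as the paper: the curves-to-curves criterion, the exponential law (Theorem \ref{exponential law on manifold} and Theorem \ref{general exponential law}), and Lemma \ref{Parallel transport of smooth vector fields} applied to $X=\partial_t\alpha^\wedge$. Your explicit remark that both $\alpha^\wedge$ and $X$ must be read at the common regularity $C^{(\infty,k-1)}$ is a careful point the paper leaves implicit (the paper instead only notes that the lemma is applied on cubes $[s_1,s_2]\times I$), but the argument is the same.
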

\begin{proof}
	We only need to show that $P$ maps smooth curves into smooth curves.
	
	By Theorem \ref{exponential law on manifold}, $\alpha\in C^\infty (\mathbb{R},C_{p}^{k}(I,M))$ if and only if $\alpha^\wedge\in C^{(\infty,k)}(\mathbb{R}\times I, M)$. Then by lemma \ref{Parallel transport of smooth vector fields}, $\left( P\circ \alpha\right) ^\wedge(s,t)= P_{\alpha_{s}}^{t\rightarrow0}(\dfrac{\partial\alpha^\wedge}{\partial t}(s,t))$ has $(\infty,k-1)$-th order continuous partial derivatives on any cube $[s_1,s_2]\times I\subseteq \mathbb{R}\times I$. So $\left( P\circ \alpha\right) ^\wedge$ is $C^{(\infty,k-1)}$ on $\mathbb{R}\times I$, and then we know that $P\circ\alpha$ is a smooth curve.
\end{proof}

\begin{theorem}\label{curve P^-1 is smooth}
	$P^{-1}:C^{k-1}(I,T_{p}M)\longrightarrow C_{p}^{k}(I,M)$, which defined by solving equation \ref{global equation for P^-1} is a $C^{\infty}$ map, so P is a diffeomorphism.
\end{theorem}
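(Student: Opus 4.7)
The plan is to mirror the strategy of Theorem \ref{curve P is smooth}: by the exponential law (Theorem \ref{exponential law on manifold}), showing $P^{-1}$ is smooth reduces to showing it sends every smooth curve $\beta\in C^\infty(\mathbb{R},C^{k-1}(I,T_pM))$ to a smooth curve in $C_p^k(I,M)$, which in turn reduces to checking that the adjoint $\Phi(s,t):=P^{-1}(\beta(s))(t)$ lies in $C^{(\infty,k)}(\mathbb{R}\times I,M)$. The input already has the right joint regularity: applying the exponential law to $\beta$ itself yields $\beta^\wedge(s,t)=:v(s,t)\in C^{(\infty,k-1)}(\mathbb{R}\times I,T_pM)$.

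First I would work locally. By Proposition \ref{The equation has the following 3 properties}, $\gamma_s:=P^{-1}(\beta(s))$ is the unique global solution of (\ref{global equation for P^-1}) with $v=v(s,\cdot)$. In a coordinate chart near a point of the image, the coupled system (\ref{local equation}) has the form $\frac{dy}{dt}=f(t,y,s)$ for $y=(\gamma^i,e_i^j)$, where $f$ is polynomial in $y$, smooth in $y$ through the Christoffel symbols $\Gamma_{kj}^l(\gamma)$, and depends on $s$ only through $v^i(t,s)$. Hence $f$ is $C^{(k-1,\infty,\infty)}$ in $(t,y,s)$, while the initial data are constants in $s$. Lemma \ref{smoothness of solution to ode} applied with $k_0=k-1$ and a single parameter $\lambda_1=s$ of regularity $\infty$ then gives the local solution $y(t,s)$ with regularity $C^{(k,\infty)}$, i.e., $\Phi\in C^{(\infty,k)}$ on that patch.

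To pass from a local statement to $\Phi\in C^{(\infty,k)}(\mathbb{R}\times I,M)$, I would fix a compact slab $[s_1,s_2]\times I$, observe that $\Phi$ is already jointly continuous by the continuous-dependence part of ODE theory, and use Lemma \ref{lemma lebesgue number} to find $\varepsilon>0$ so that every sub-cube of side $\varepsilon$ maps into a single coordinate chart. Then I would iterate the local result along $t$ starting from $t=0$: at $t=0$ the initial data $\gamma_s(0)=p$, $f_i(0)=e_i$ are independent of $s$, hence trivially $C^{(k,\infty)}$; at each subsequent patch the initial data are inherited from the endpoint of the previous patch at a fixed $t$-value and thus carry the required mixed regularity. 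Finitely many iterations cover $[s_1,s_2]\times I$, and letting $s_1,s_2$ vary gives the global result.

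The main obstacle is organising this patching so that the mixed $C^{(\infty,k)}$ regularity truly survives chart transitions rather than degrading to separate continuity in $s$ and $t$; the key point is that Lemma \ref{smoothness of solution to ode} outputs exactly the product-type regularity needed, and the smooth transition maps between coordinate charts of $M$ preserve it. Once $\Phi\in C^{(\infty,k)}(\mathbb{R}\times I,M)$ is in hand, Theorem \ref{exponential law on manifold} promotes $P^{-1}\circ\beta$ to a smooth curve in $C^k(I,M)$; it lands in the embedded submanifold $C_p^k(I,M)$ because $\gamma_s(0)=p$ for all $s$. Combined with Theorem \ref{curve P is smooth}, this yields that $P$ is a diffeomorphism.
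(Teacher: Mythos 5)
Your proposal is correct and follows essentially the same route as the paper: reduce via the exponential law to showing that $P^{-1}$ sends smooth curves to smooth curves, apply Lemma \ref{smoothness of solution to ode} to the local ODE system to get $C^{(\infty,k)}$ regularity of $(\gamma,f_i^j)$ on each coordinate patch, and then propagate along a partition $0=t_0<t_1<\cdots<t_r=1$ of $I$ by induction, using that the data at each $t_{j}$ inherit the mixed regularity. The only cosmetic difference is that you invoke the Lebesgue-number lemma on a compact slab, while the paper partitions $[0,1]$ directly from the continuity of $t\mapsto\gamma(s_0,t)$; the substance is identical.
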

\begin{proof}
	We are going to prove that the image of a $C^{\infty}$ curve under $P^{-1}$ is a $C^{\infty}$ curve.
	
	Let 
\begin{equation}
	\begin{array}{cccc}
		v: & \mathbb{R}\times I & \longrightarrow & T_{p}M\\
		& s,t & \longmapsto & v(s,t)
	\end{array}
\end{equation}
has $(\infty,k-1)$-th order continuous partial derivatives, so $v$ corresponds to a smooth curve $v^\lor :\mathbb{R}\rightarrow C^{k-1}(I,T_{p}M),\ s\mapsto v(s,-)$. 
Choose a coordinate chart $(U_0,\varphi_0)$ near $p$ and a basis $\{e_i(0)\}\in T_p M$, then we have the equation in $(U_0,\varphi_0)$.
\begin{equation}
	\begin{cases}
		\frac{df_{i}^{l}(s,t)}{dt}=-f_{i}^{j}(s,t)\varGamma_{kj}^{l}(\gamma(s,t))v^{m}(s,t)f_{m}^{k}(s,t)=:g(s,t,\gamma,f)\\
		\dfrac{d\gamma^{k}}{dt}(s,t)=v^{i}(s,t)f_{i}^{k}(s,t)\\
		\gamma^{k}(s,0)=\varphi_{0}(p),\ f_{i}^{j}(s,0)=e_{i}^{j}(0)
	\end{cases}
\end{equation}
By lemma \ref{smoothness of solution to ode}, the solution $(\gamma^{k}(s,t),f_{i}^{j}(s,t))$ has $(\infty,k)$-th order continuous partial derivatives.

By proposition \ref{The equation has the following 3 properties}
\begin{equation}
	\begin{array}{cccc}
		\gamma_{s_{0}}: & I & \longrightarrow & M\\
		& t & \longmapsto & \gamma(s_{0},t)
	\end{array}
\end{equation} 
is a continuous map $\forall s_0 \in \mathbb{R}$. So there exists a partition $0=t_0<t_1<\cdots <t_k=1$ of $[0,1]$, such that $\gamma([t_j,t_{j+1}])\subseteq U_j \subseteq M$, where $U_j$ is a coordinate chart of $M$.

And because the initial value is smooth at $t=0$, 
so $\exists\varepsilon_0 >0$ such that the solution $(\gamma^k,e_i^l)$ exists and is $C^{(\infty,k)}$ on $(s_0-\varepsilon_0,s_0+\varepsilon_0)\times[0,t_1]$.

By induction, we can assume the solution $(\gamma^k,e_i^l)$ exists and is $C^{(\infty,k)}$ on $(s_0-\varepsilon_j,s_0+\varepsilon_j)\times[0,t_{j+1}]$. 
Particularly, $\gamma^k(s,t_{j+1})$ and $e_i^l(s,t_{j+1})$ are $C^\infty$ with respect to $s$.

So we know that $\exists \varepsilon_{j+1}>0$, such that the solution $(\gamma^k,e_i^l)$ exists and is $C^{(\infty,k)}$ on $(s_0-\varepsilon_{j+1},s_0+\varepsilon_{j+1})\times[0,t_{j+2}]$.

Finally, we proved $\forall s_0\in I,\ \exists \varepsilon_{k-1}>0$, such that $(\gamma^k,e_i^l)$ is $C^{(\infty,k)}$ on $(s_0-\varepsilon_{k-1},s_0+\varepsilon_{k-1})\times[0,1]$. So $\gamma(s,t)$ is $C^{(\infty,k)}$ on $\mathbb{R}\times I$, and $\gamma$ corresponds to a $c^\infty $ curve $\gamma^{\vee}:\mathbb{R}\longrightarrow C^{k}_p(I,M)$, which means that $P^{-1}$ maps $C^\infty$ curves into $C^\infty$ curves.
\end{proof}
\begin{remark}
	With the same process as in theorem \ref{curve P is smooth} and theorem \ref{curve P^-1 is smooth}, we have $P:C_{p}^{k}([-1,1],M)\longrightarrow C^{k-1}([-1,1],T_{p}M)$ is a diffeomorphism, where $C_{p}^{k}([-1,1],M):=\{\gamma\in C^{k}([-1,1],M)|\gamma(0)=p\}$, and $P:\gamma\longmapsto t\mapsto P^{t\rightarrow0}_\gamma \dot{ \gamma}(t)$
\end{remark}
\begin{corollary}\label{homeomorphism C(R,M) to C(R,TpM)}
	$C^{k}_p(\mathbb{R},M)$ with its $C^k$ compact open topology is homeomorphic to the $C^{k-1}$ compact open topology of  $C^{k-1}(\mathbb{R},T_p M)$ 
\end{corollary}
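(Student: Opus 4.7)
The plan is to reduce the corollary to the diffeomorphism theorems already established on bounded intervals, exploiting that the $C^k$ compact-open topology on mapping spaces out of $\mathbb{R}$ is the initial topology under restriction to the intervals $[-n,n]$.

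First, I would extend $P$ to the global setting by the same defining formula $P(\gamma)(t) = P^{t\to 0}_\gamma(\dot{\gamma}(t))$. Given any $v \in C^{k-1}(\mathbb{R},T_pM)$, Proposition \ref{The equation has the following 3 properties}(3) guarantees that the ODE defining $P^{-1}(v)$ admits a solution on all of $\mathbb{R}$ (here we use completeness of the Riemannian metric), so $P$ is a well-defined bijection between $C^k_p(\mathbb{R},M)$ and $C^{k-1}(\mathbb{R},T_pM)$.

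Next, I would establish naturality under restriction. Since parallel transport $P^{t\to 0}_\gamma$ depends only on the values of $\gamma$ between parameters $0$ and $t$, for every $n\geq 1$ the diagram
\[
\begin{tikzcd}
C^k_p(\mathbb{R}, M) \arrow[r, "P"] \arrow[d] & C^{k-1}(\mathbb{R}, T_pM) \arrow[d] \\
C^k_p([-n, n], M) \arrow[r, "P_n"] & C^{k-1}([-n, n], T_pM)
\end{tikzcd}
\]
commutes, where the vertical arrows are restriction. By the remark following Theorem \ref{curve P^-1 is smooth} (the argument for $[-1,1]$ carries over verbatim to $[-n,n]$, or alternatively one reparametrizes by $t\mapsto t/n$), each $P_n$ is a diffeomorphism, and in particular a homeomorphism.

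Finally, every compact $K \subseteq \mathbb{R}$ is contained in some $[-n,n]$, so the subbasis defining the $C^k$ compact-open topology on $C^k_p(\mathbb{R},M)$ may be taken with only the compacta $\{[-n,n]\}_{n\geq 1}$; the same holds on the target side. Both sides therefore carry the initial topology with respect to the restriction maps. Combined with the naturality square and the fact that each $P_n$ is a homeomorphism, this forces $P$ and $P^{-1}$ to be continuous.

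The main technical point, and essentially the whole content of the argument, is the last step: verifying that the compact-open topology on $C^k(\mathbb{R},\cdot)$ is generated by the restrictions to an exhausting sequence of compacta, so that a coherent family of homeomorphisms $\{P_n\}$ descends to a homeomorphism of the limiting spaces. All analytic work has already been done in Theorems \ref{curve P is smooth} and \ref{curve P^-1 is smooth}, so once this topological reduction is recorded there is nothing further to compute.
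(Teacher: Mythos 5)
Your proposal is correct and matches the paper's argument in all essentials: the paper likewise reduces to the compact-interval case by exhausting $\mathbb{R}$ with intervals, using the pullbacks $i_a^*:\gamma\mapsto\gamma(a\,\cdot)$ along scalings $[-1,1]\to\mathbb{R}$ (with $a^{-1}P$ as the bottom map of the commuting square) together with the fact that both $C^k$ compact-open topologies are initial with respect to this family. Your use of genuine restrictions to $[-n,n]$ in place of reparametrized pullbacks to $[-1,1]$ is only a cosmetic difference.
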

Define 
\begin{equation}
	\begin{array}{cccc}
		P': & C_{p}^{k}(\mathbb{R},M) & \longrightarrow & C^{k-1}(\mathbb{R},T_{p}M)\\
		& \gamma & \longmapsto & t\mapsto P_{\gamma}^{t\rightarrow0}\dot{\gamma}(t)
	\end{array}
\end{equation}
For any $i_a:[-1,1]\longrightarrow \mathbb{R}$, $i_a:r\longrightarrow ar$, where $a\neq 0$ is a real number, we have the following commute diagram: 

\begin{tikzcd}
	{C^{k}_p(\mathbb{R},M)} \arrow[r, "P'"] \arrow[d, "i_a^*"] & {C^{k-1}(\mathbb{R},T_p M)} \arrow[d, "\tilde{i_a^*}"] \\
	{C^{k}_p([-1,1],M)} \arrow[r, "a^{-1}P"]                            & {C^{k-1}([-1,1],T_p M)}                          
\end{tikzcd}

where $a^{-1}P:\gamma \longmapsto t\mapsto a^{-1}P_{\gamma}^{t\rightarrow0}\dot{\gamma}(t)$ is just $P$ multiplied by $a^{-1}$.

So $\tilde{i_a^*}\circ P'=a^{-1}P\circ i_a^*$ is continuous, hence $P'$ is continuous by the definition of the $C^{k-1}$ compact open topology.

$P'^{-1}$ is defined by solving equation \ref{global equation for P^-1} and by proposition \ref{The equation has the following 3 properties} (3), we know that it is well defined. 

Then we prove $P'^{-1}$ is continuous, which is equivalent to $\forall a\in \mathbb{R},\ i_a^*\circ P'^{-1}$  is continuous. But $i_a^*\circ P'^{-1}=(a^{-1}P)^{-1}\circ \tilde{i_a^*}$, which is continuous, so we are done.

\begin{definition}[polynomial-like curve]
	A polynomial-like curve $\gamma$ of degree $n$ on $M$ is a $C^k$ curve which $n^{th}$ order covariant derivative is zero, that is, $\nabla_{\dot{ \gamma}}^{(n)}\dot{ \gamma}(t):=\nabla_{\dot{ \gamma}}\cdots\nabla_{\dot{ \gamma}}\dot{ \gamma}(t)=0$ for any $t$. In particular, a geodesic is a polynomial-like curve of degree 1.
\end{definition}
\begin{remark}
	$P$ and $P^{-1}$ maps a polynomial curve into a polynomial curve.
\end{remark}
\begin{proof}
	Only need to show that $P^{t\rightarrow 0}_{\gamma}\left( \nabla_{\dot{ \gamma}}\dot{ \gamma}(t)\right) =\dfrac{d}{d t} P(\gamma)(t)$,  
  which implies that $P^{t\rightarrow 0}_{\gamma}\left( \nabla^{(n)}_{\dot{ \gamma}}\dot{ \gamma}(t)\right) =\dfrac{d^n}{d t^n} P(\gamma)(t)$. 
	
	Choose a basis $\{e_i\}$of $T_{\gamma(0)} M$ and parallel transport along $\gamma$. So we get a parallel frame $\{e_i(t)\}$ along $\gamma$. Then $\dot{ \gamma}(t)=v^{i}(t)e_{i}(t)$ and $P(\gamma)(t)=v^{i}(t)e_{i}(0)\in T_{\gamma(0)}M$. So $\nabla_{\dot{ \gamma}}\dot{ \gamma}(t)=\dfrac{\partial v^{i}}{\partial t}(t)e_{i}(t)$, and then $P\left( \nabla_{\dot{ \gamma}}\dot{ \gamma}\right) (t)=\dfrac{d v^{i}}{d t}(t)e_{i}(0)=\dfrac{d}{d t} P(\gamma)$.
	
	So the map $P$ is commute with covariant derivative, then we know that $P^{-1}$ is commute with covariant derivative. 
\end{proof}
\begin{theorem}[Weierstrass approximation theorem]
	Polynomial-like curves are dense in $C^k(I,M)$ with respect to the $C^k$ compact open topology, $\forall k\geq 0$.
\end{theorem}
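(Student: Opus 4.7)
The plan is to transfer the approximation problem from $M$ to the vector space $T_pM$ via the diffeomorphism $P$, where the classical Weierstrass theorem applies directly, and to handle the $k=0$ case by a preliminary smoothing step.

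\textbf{Case $k\ge 1$.} Fix $\gamma\in C^k(I,M)$ and set $p:=\gamma(0)$, so that $\gamma\in C_p^k(I,M)$. By Theorem~\ref{curve P is smooth} and Theorem~\ref{curve P^-1 is smooth}, $P\colon C_p^k(I,M)\to C^{k-1}(I,T_pM)$ is a diffeomorphism; by the remark preceding this theorem, $P$ sends polynomial-like curves on $M$ bijectively onto honest polynomial curves in the vector space $T_pM\cong\mathbb{R}^{\dim M}$. It therefore suffices to approximate $v:=P(\gamma)\in C^{k-1}(I,T_pM)$ by polynomials in the $C^{k-1}$ topology and pull back via $P^{-1}$. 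Such approximation is a standard consequence of the classical Weierstrass theorem applied componentwise: approximate the continuous function $v^{(k-1)}$ uniformly by polynomials $p_n$, and define $v_n$ to be the polynomial obtained by integrating $p_n$ exactly $k-1$ times against the Taylor data $v(0),v'(0),\dots,v^{(k-2)}(0)$. Then $v_n\to v$ in the $C^{k-1}$ topology, so $\gamma_n:=P^{-1}(v_n)$ is a polynomial-like curve and $\gamma_n\to\gamma$ in $C_p^k(I,M)$. Since $C_p^k(I,M)\hookrightarrow C^k(I,M)$ is a topological embedding, we get convergence in $C^k(I,M)$ as well.

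\textbf{Case $k=0$.} Here $P$ is unavailable, so I first upgrade regularity. Given $\gamma\in C^0(I,M)$ and $\varepsilon>0$, Lemma~\ref{lemma lebesgue number} yields a partition $0=t_0<t_1<\cdots<t_N=1$ such that each $\gamma([t_i,t_{i+1}])$ lies in one coordinate chart of $M$. Smooth each piece in coordinates (for instance by standard mollification, adjusted to match the neighboring endpoint values) and glue the pieces to obtain $\tilde\gamma\in C^1(I,M)$ with $\sup_t d(\gamma(t),\tilde\gamma(t))<\varepsilon/2$. Apply the already-established case $k=1$ to find a polynomial-like curve $\hat\gamma$ within $\varepsilon/2$ of $\tilde\gamma$ in the $C^1$ topology, and hence also in the $C^0$ topology. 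The triangle inequality then gives $\sup_t d(\gamma(t),\hat\gamma(t))<\varepsilon$, completing this case.

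\textbf{Main obstacle.} The only non-formal point is arranging convergence of all $k-1$ derivatives simultaneously in the Weierstrass step; this is handled by approximating just the top derivative and integrating, so it is real-analytic but routine. The secondary delicate point is the $k=0$ case, where $P$ does not apply and a $C^1$ approximation must be produced by hand; this, however, is entirely local and uses nothing beyond Lemma~\ref{lemma lebesgue number} and elementary mollification in coordinate charts.
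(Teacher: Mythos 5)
Your proposal is correct and follows essentially the same route as the paper: for $k\ge 1$, transfer to $C^{k-1}(I,T_pM)$ via the diffeomorphism $P$, apply the classical Weierstrass theorem, and pull back polynomials to polynomial-like curves; for $k=0$, first approximate by a $C^1$ curve and invoke the $k=1$ case. You supply somewhat more detail than the paper at the two points it glosses over (obtaining $C^{k-1}$ convergence by integrating an approximation of the top derivative, and the chart-by-chart mollification for $k=0$), but the underlying argument is the same.
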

\begin{proof}
	If $k\geq 1$, then for any $\gamma\in C^k(I,M)$, $\gamma\in C^k_{\gamma(0)}(I,M)$. Then by the usual Weierstrass approximation theorem, there exists $v_n\in C^{k-1}(I,T_{\gamma(0)} M)$ converges to $P(\gamma)$. 
	Hence $P^{-1}(v_n)$ converges to $\gamma$ because $P$ is a homeomorphism.
	
	If $k=0$, because any continuous curve can be approximate by $C^1$ curves, and Polynomial-like curves are dense in $C^{1}(I,M)$, so we are done. 
\end{proof}
\begin{remark}
	Any curve $\gamma\in C^k(I,M)$ has an Taylor expansion because $\gamma\in C^k_{\gamma(0)}(I,M)$ and $P:C^k_{\gamma(0)}(I,M)\longrightarrow C^{k-1}(I,T_{\gamma(0)} M)$ is a diffeomorphism. So we can define the Taylor series of $\gamma$ by taking Taylor series of $P(\gamma)$ in $C^{k-1}(I,T_{\gamma(0)} M)$.
\end{remark}

\subsection{Generalization in higher dimensions}
Let $[-1,1]^{n}$ be the closed unit ball of dimension n, then we will prove for $k_i\geq 1$, $C^{(k_1,\cdots,k_n)}_{p}([-1,1]^{n},M)\simeq \oplus _{i=1}^{n}C^{(k_1,\cdots,k_{i-1},k_i-1)}([-1,1]^{i},T_{p}M)$, hence $C^{(k_1,\cdots,k_n)}_{p}([-1,1]^{n},M)$ is diffeomorphic to a Frechet (or Banach) space. 

\begin{lemma}\label{existance of P^{-1}}
	Let 
	\begin{equation}
		\begin{array}{cccc}
			v: & [-1,1]^{n+1} & \longrightarrow & TM\\
			& (s_{1},\cdots,s_{n},t) & \longmapsto & v(s,t)
		\end{array}
	\end{equation}
	satisfies:
	
	(1) $\pi_{M}\circ v(s,t)=\gamma_{0}(s)$, that is, $\pi_{M}\circ v(s,t)$ doesn't depend on t, where $\pi_{M}:TM\longrightarrow M$ is the projection.
	
	(2) $v$ is a $C^{(a_1,\cdots,a_n,a_{n+1})}$ vector field, where $a_i\geq 0$.
	
	Then there exists a unique $C^{(a_1,\cdots,a_n,a_{n+1}+1)}$ map
	\begin{equation}
		\begin{array}{cccc}
			\gamma: & [-1,1]^{n+1} & \longrightarrow & M\\
			& (s_{1},\cdots,s_{n},t) & \longmapsto & \gamma(s,t)
		\end{array}
	\end{equation}
	such that $P_{\gamma_{s}}^{t\rightarrow0}\left(\dfrac{\partial\gamma}{\partial t}(s,t)\right)=v(s,t)$
\end{lemma}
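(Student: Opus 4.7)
The plan is to mirror the argument of Theorem \ref{curve P^-1 is smooth}, carrying the extra parameters $s=(s_1,\ldots,s_n)$ through the same inductive construction. For each fixed $s\in[-1,1]^n$ the existence and uniqueness of a curve $\gamma_s:[-1,1]\to M$ with $\gamma_s(0)=\gamma_0(s)$ and $P^{t\to 0}_{\gamma_s}(\partial_t\gamma(s,t))=v(s,t)$ is already furnished by Proposition \ref{The equation has the following 3 properties}; the content of the lemma is therefore the joint regularity of $(s,t)\mapsto\gamma(s,t)$.

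First I would rewrite the defining relation locally as an ODE system analogous to (\ref{local equation}). Fix $(s_0,0)$, choose a chart $(U_0,\varphi_0)$ near $\gamma_0(s_0)$ and a basis of $T_{\gamma_0(s_0)}M$; parallel transporting this basis along the $t$-slices gives a coupled ODE in $(\gamma^l(s,t),f_i^l(s,t))$ driven by $v^i(s,t)$, with initial value $(\varphi_0(\gamma_0(s)),\text{const})$ at $t=0$. Since $\gamma_0=\pi_M\circ v(\cdot,0)$ is of class $C^{(a_1,\ldots,a_n)}$ and $v$ is $C^{(a_1,\ldots,a_{n+1})}$, Lemma \ref{smoothness of solution to ode} yields that the local solution is of class $C^{(a_1,\ldots,a_n,a_{n+1}+1)}$ on the slab on which it remains inside the chart.

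Next the plan is to globalize in $t$. Using Lemma \ref{lemma lebesgue number} applied to the (pointwise-defined, continuous) map $(s,t)\mapsto\gamma(s,t)$, there exists an $\varepsilon>0$ such that on every sub-box of side $\varepsilon$ the image of $\gamma$ lies in a single coordinate chart of $M$. Fixing $s_0$ and choosing a partition $0=t_0<t_1<\cdots<t_N=1$ adapted to this $\varepsilon$, I would show inductively on $j$ that on a neighborhood of the form $\bigl((s_0-\delta_j,s_0+\delta_j)^n\cap[-1,1]^n\bigr)\times[0,t_{j+1}]$ the solution is $C^{(a_1,\ldots,a_n,a_{n+1}+1)}$. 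The inductive step reapplies Lemma \ref{smoothness of solution to ode} on the next slab, using as initial condition the values $\gamma(s,t_{j+1})$ and $f_i^l(s,t_{j+1})$, whose $C^{(a_1,\ldots,a_n)}$-regularity in $s$ is already known from the previous step. Running the same argument on $[-1,0]$ and passing to a finite subcover of $[-1,1]^n$ by boxes of this form yields joint regularity on all of $[-1,1]^{n+1}$.

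The main obstacle, just as in Theorem \ref{curve P^-1 is smooth}, is that the local coordinate description is only valid until $\gamma_s$ leaves the chart, so the argument must bootstrap regularity across a finite partition of the $t$-interval. The genuinely new ingredient here is that the partition, and the neighborhoods $(s_0-\delta_j,s_0+\delta_j)^n$, must be chosen uniformly in $s$ over a neighborhood of $s_0$; this uniformity is what Lemma \ref{lemma lebesgue number} supplies. Uniqueness of $\gamma$ is inherited pointwise from Proposition \ref{The equation has the following 3 properties}(1)--(2), so only the regularity claim requires work.
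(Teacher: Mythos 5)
Your proposal follows essentially the same route as the paper: write the defining relation as a parametrized ODE system in a chart, invoke Lemma \ref{smoothness of solution to ode} to obtain $C^{(a_1,\cdots,a_n,a_{n+1}+1)}$ regularity on each slab, and bootstrap across a finite partition of the $t$-interval via induction, treating $t\le 0$ symmetrically. One small caution: the chart-covering argument should be applied to the single continuous curve $\gamma_{s_0}$ for fixed $s_0$ (as the paper does), not to the joint map $(s,t)\mapsto\gamma(s,t)$, whose continuity is not yet established at that stage; the rest of your induction is unaffected by this adjustment.
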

\begin{proof}
	Only need to prove $\gamma(s,t)$ is $C^{(a_1,\cdots,a_n,a_{n+1}+1)}$ near any point $(s_0,t_0)\in [-1,1]^{n+1}$. Without loss of generality, we assume $t_0\geq 0$.
	Take a coordinate chart $(U_{0},\varphi_{0})$ near $\gamma_{0}(s_{0})$, and take a frame $\{e_{i}(s)\}$ along $\gamma_{0}$. Then in local coordinate, $\gamma(s,t)$ satisfies the following equation:
	\begin{equation}
	\begin{cases}
		\dfrac{df_{i}^{l}(s,t)}{dt}=-f_{i}^{j}(s,t)\varGamma_{kj}^{l}(\gamma(s,t))v^{m}(s,t)f_{m}^{k}(s,t)=:g(s,t,\gamma,f)\\
		\dfrac{d\gamma^{k}}{dt}(s,t)=v^{i}(s,t)f_{i}^{k}(s,t)\\
		\gamma^{k}(s,0)=\gamma_{0}^{k}(s),f_{i}^{j}(s,0)=e_{i}^{j}(s)
	\end{cases}
	\end{equation}
	$g(s,t,\gamma,f)$ is $C^{(i_1,\cdots,i_n,i_{n+1},\infty,\infty)}$.
	So the solution $\gamma(s,t),\ e_{i}^{j}(s,t)$ is $C^{(a_1,\cdots,a_n,a_{n+1}+1)}$ on the region where the solution exists.
	
	Let $s=s_{0}$, because the map
	\begin{equation}
	\begin{array}{cccc}
		\gamma_{s_{0}}: & [-1,1] & \longrightarrow & M\\
		& t & \longmapsto & \gamma(s_{0},t)
	\end{array}
	\end{equation}
	is continuous, so there exists a partition $0=t_{0}<t_{1}<\cdots<t_{r}=1$ of $I$, such that $\gamma_{s_{0}}\left(\left[ t_{i},t_{i+1}\right]  \right) $ contains in some coordinate chart $(U_{i},\varphi_{i})$ of $M$. For $\gamma_{s_{0}}\left(\left[ t_{0},t_{1}\right]\right)\subseteq U_{0}$, we know that $\gamma^{k}(s,t),e_{i}^{j}(s,t)$ is $C^{(a_1,\cdots,a_n,a_{n+1}+1)}$ on $B_{\varepsilon_{0}}(s_{0})\times[t_{0},t_{1}]$.
	
	By induction, we can assume $\gamma^{k}(s,t),e_{i}^{j}(s,t)$  is $C^{(a_1,\cdots,a_n,a_{n+1}+1)}$ on $B_{\varepsilon_{k}}(s_{0})\times[t_{0},t_{k}]$. 
	And because $\gamma^{k}(s,t),e_{i}^{j}(s,t)$ is $C^{(a_1,\cdots,a_n)}$ with respect to $s$ on $B_{\varepsilon_{k}}(s_{0})\times\{t_{k}\}$, and 
	 $\gamma_{s_{0}}\left(\left[ t_{k},t_{k+1}\right]\right)\subseteq U_{k}$, so $\gamma^{k}(s,t),e_{i}^{j}(s,t)$  is $C^{(a_1,\cdots,a_n,a_{n+1}+1)}$ on $B_{\varepsilon_{k+1}}(s_{0})\times[0,t_{k+1}]$.
	
	In conclusion, we obtain that $\forall s_0,\ \exists\varepsilon>0$, such that $\gamma^{k}(s,t),e_{i}^{j}(s,t)$  is $C^{(a_1,\cdots,a_n,a_{n+1}+1)}$ on $B_{\varepsilon_{r}}(s_{0})\times[0,1]$. By the same method for $t\leq 0$, we know $\gamma$ is $C^{(a_1,\cdots,a_n,a_{n+1}+1)}$ on $[-1,1]^{n+1}$.
\end{proof}
\begin{remark}
	The lemma also holds if $v$ is defined on $\mathbb{R}^{m}\times[-1,1]^{n-m}\times[-1,1]$
	\begin{equation}
		\begin{array}{cccc}
			v: & \mathbb{R}^{m}\times[-1,1]^{n-m}\times[-1,1] & \longrightarrow & TM\\
			& (s_{1},\cdots,s_{n},t) & \longmapsto & v(s,t)
		\end{array}
	\end{equation}
	Because in this case, the map $\gamma(s,t)$ we obtained as before is $C^{(a_1,\cdots,a_n,a_{n+1}+1)}$ near any point.
\end{remark}

\begin{lemma}[Parallel transport of smooth vector fields (2)]\label{Parallel transport of smooth vector fields 2}
	Let
	\begin{equation}
		\begin{array}{cccc}
			\alpha: & [-1,1]^{n+1} & \longrightarrow & M\\
			& (s_{1},\cdots,s_{n},t) & \longmapsto & \alpha(s_{1},\cdots,s_{n},t)
		\end{array}
	\end{equation}
	$X$ is a vector field along $\beta:(r_1,\cdots,r_{l},s_{1},\cdots,s_{n},t)\longmapsto \alpha (s_{1},\cdots,s_{n})$. 
	
	Suppose $\dfrac{\partial \alpha}{\partial t}$ is continuous, $k_i\geq 0$, $X\in C^{(k_1,\cdots,k_{l+n+1})}$ and $\alpha \in C^{(k_{l+1},\cdots,k_{l+n+1})}$
	
	Then $P^{0\rightarrow t}_{\alpha_{s}}X(r,s,t)$ is a $ C^{(k_1,\cdots,k_{l+n+1})}$ vector field.
\end{lemma}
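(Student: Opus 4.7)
The plan is to adapt the proof of Lemma \ref{Parallel transport of smooth vector fields} essentially verbatim, treating the additional parameters $r_1,\cdots,r_l$ as smooth ``spectator'' parameters that do not affect the base map $\alpha$ but enter passively through the initial datum $X$. The point is that the parallel transport ODE is driven entirely by $\alpha$, so dependence on the $r_i$ is inherited from $X$ alone.

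First, I would introduce the auxiliary map $\tilde\alpha(r,s,t,u):=P_{\alpha_s}^{0\rightarrow u}X(r,s,t)$, which in a local coordinate chart $(U,\varphi)$ of $M$ satisfies the ODE
\begin{equation*}
\begin{cases}
\dfrac{d}{du}\tilde{\alpha}^{l}(r,s,t,u)=-\dfrac{\partial\alpha^{i}}{\partial t}(s,u)\,\tilde{\alpha}^{j}(r,s,t,u)\,\Gamma_{ij}^{l}(\alpha(s,u))=:\tilde{f}(r,s,t,u,\tilde{\alpha})\\
\tilde{\alpha}(r,s,t,0)=X(r,s,t).
\end{cases}
\end{equation*}
Since $\alpha$ is $C^{(k_{l+1},\cdots,k_{l+n+1})}$, the factor $\partial\alpha/\partial t$ is $C^{(k_{l+1},\cdots,k_{l+n},k_{l+n+1}-1)}$, and the Christoffel symbols are smooth; hence $\tilde f$ is smooth in the $r$-, $t$- and $\tilde\alpha$-slots, of class $C^{(k_{l+1},\cdots,k_{l+n})}$ in $s$, and of class $C^{k_{l+n+1}-1}$ in $u$.

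Second, I would invoke Lemma \ref{smoothness of solution to ode} with $u$ playing the role of time and $(r_1,\cdots,r_l,s_1,\cdots,s_n,t)$ playing the role of parameters. The initial time $t_0\equiv 0$ is constant, and the initial value $X$ is $C^{(k_1,\cdots,k_{l+n+1})}$ by hypothesis. Taking the minimum of the regularity exponents arising from $\tilde f$ and from $X$ in each parameter, the lemma yields that $\tilde\alpha$ is $C^{(k_1,\cdots,k_{l+n+1},\,k_{l+n+1})}$ in $(r,s,t,u)$ on the part of its domain where the solution stays inside a single chart.

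Third, to upgrade to global regularity on the full cube, I would repeat the Lebesgue-number argument from Lemma \ref{Parallel transport of smooth vector fields}: applying Lemma \ref{lemma lebesgue number} to the continuous map $\hat\alpha:(r,s,t,u)\mapsto\alpha(s,u)$, I would subdivide the domain into cubelets on which $\hat\alpha$ has image in a single chart, and then patch the local solutions along the $u$-direction by induction, observing that at each interface the initial datum remains $C^{(k_1,\cdots,k_{l+n+1})}$. This promotes the conclusion to $\tilde\alpha\in C^{(k_1,\cdots,k_{l+n+1},\,k_{l+n+1})}$ on all of $[-1,1]^{l+n+2}$. The desired vector field is $P_{\alpha_s}^{0\rightarrow t}X(r,s,t)=\tilde\alpha(r,s,t,t)$, which, being the restriction of a $C^{(k_1,\cdots,k_{l+n+1},k_{l+n+1})}$ function to the diagonal $\{u=t\}$, is $C^{(k_1,\cdots,k_{l+n+1})}$ by the chain rule.

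The only real obstacle is the bookkeeping of the mixed regularity exponents and the verification that the restriction $u=t$ does not cost regularity in the $t$-direction. This is guaranteed since both the $t$- and $u$-slots of $\tilde\alpha$ carry regularity $k_{l+n+1}$, so any $j$-th derivative of $t\mapsto\tilde\alpha(r,s,t,t)$ with $j\le k_{l+n+1}$ expands into a finite sum of mixed partials of total order at most $k_{l+n+1}$, each of which is continuous. Otherwise the argument is a line-by-line transcription of Lemma \ref{Parallel transport of smooth vector fields} with the passive parameters $r_i$ carried through.
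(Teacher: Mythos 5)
Your proposal is correct and follows essentially the same route as the paper's own proof: the same auxiliary map $\tilde\alpha(r,s,t,u)=P_{\alpha_s}^{0\rightarrow u}X(r,s,t)$, the same local ODE with the regularity of $\tilde f$ fed into Lemma \ref{smoothness of solution to ode}, the same Lebesgue-number subdivision and induction along $u$, and the same restriction to the diagonal $u=t$ at the end. The only difference is that you spell out the chain-rule justification for why restricting to $u=t$ costs no regularity, which the paper leaves implicit.
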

\begin{proof}
	Let $\tilde{\alpha}:(r,s,t,u)\longrightarrow P_{\alpha_{s}}^{0\rightarrow u}X(r,s,t)$.
	Then in local coordinate system $(U,\varphi)$, $\tilde{\alpha}$ satisfies the following equation:
	\begin{equation}
		\begin{cases}
			\dfrac{d}{du}\tilde{\alpha}^{m}(r,s,t,u)=-\dfrac{\partial\alpha^{i}}{\partial t}(s,u)\tilde{\alpha}^{j}(r,s,t,u)\Gamma_{ij}^{m}(\alpha(s,u))=:\tilde{f}(r,s,t,u,\tilde{\alpha})\\
			\tilde{\alpha}(r,s,t,0)=X(r,s,t)
		\end{cases}
	\end{equation}
	
	$\tilde{f}(r,s,t,u,\tilde{\alpha})$ has $(k_1,\cdots,k_{l+n},\infty,k_{l+n+1}-1,\infty)$-th order continuous partial derivatives. By lemma \ref{smoothness of solution to ode}, we know that $\tilde{\alpha}(r,s,t,u)$ is 
	$C^{(k_1,\cdots,k_n,k_{l+n+1},k_{l+n+1})}$ on the region where the solution exists.

	Let $\pi_{M}$ be the projection $TM\longrightarrow M$.
	For $\pi_{M}\circ\tilde{\alpha}(r,s,t,u)=\alpha(s,u)$ is continuous, by lemma \ref{lemma lebesgue number}, $\exists \varepsilon>0$, such that each cube $X_{\varepsilon}\subseteq [-1,1]^{l+n+2}$ with length $\varepsilon$ satisfies $\pi_{M}\circ\tilde{\alpha}(X_{\varepsilon})\subseteq U_i$ for some coordinate neighborhood $U_i$ of $M$.
	
	So if the initial value of $\tilde{\alpha}(r,s,t,u)$ in $X_{\varepsilon}$ is $C^{(k_1,\cdots,k_{l+n+1})}$, then $\tilde{\alpha}(r,s,t,u)$ is $C^{(k_1,\cdots,k_{l+n+1},k_{l+n+1})}$ in $X_{\varepsilon}$. For $\tilde{\alpha}(r,s,t,0)=X(r,s,t)$ is $C^{(k_1,\cdots,k_{l+n+1})}$, we conclude that 
	$\tilde{\alpha}(r,s,t,u)$ is $C^{(k_1,\cdots,k_{l+n+1},k_{l+n+1})}$ in $[-1,1]^{n+2}$. 
	
	So the vector field $P^{0\rightarrow t}_{\alpha_{s}}X(r,s,t)=\tilde{\alpha}(r,s,t,t)$ has $(k_1,\cdots,k_{l+n+1})$-th order continuous partial derivatives. 
\end{proof}
\begin{remark}
	If $\alpha $ is defined on $\mathbb{R}^m\times [-1,1]^{n-m}\times [-1,1]$, 
		\begin{equation}
		\begin{array}{cccc}
			\alpha: & [-1,1]^{n+1} & \longrightarrow & M\\
			& (s_{1},\cdots,s_{n},t) & \longmapsto & \alpha(s_{1},\cdots,s_{n},t)
		\end{array}
	\end{equation}
	the lemma still holds because in this case, $P^{0\rightarrow t}_{\alpha_{s}}X(r,s,t)$ has $(k_1,\cdots,k_{l+n+1})$-th order continuous partial derivatives near any point $(r,s,t)$.
\end{remark}

 Next we define $P:C^{(k_1,\cdots,k_n)}_{p}([-1,1]^{n},M)\simeq \oplus _{i=1}^{n}C^{(k_1,\cdots,k_i-1)}([-1,1]^{i},T_{p}M)$. Let $P_{i}^{s_{i}\rightarrow 0}$ be the parallel transport from $\alpha(s_{1},\cdots,s_{i},\cdots,s_{n})$ to $\alpha(s_{1},\cdots,0,\cdots,s_{n})$, along the curve  $\gamma=\alpha_{s_{1}\cdots\hat{s}_{i}\cdots s_{n}}:=s_{i}\longmapsto\alpha(s_{1},\cdots,s_{i},\cdots,s_{n})$, then $P$ can be define as:
\begin{equation}
	\begin{array}{cccc}
		P: & C_{p}^{(k_1,\cdots,k_n)}([-1,1]^{n},M) & \longrightarrow & \oplus_{i=1}^{n}C^{(k_1,\cdots,k_i-1)}([-1,1]^{i},T_{p}M)\\
		& \alpha(s_{1},\cdots,s_{n}) & \longmapsto & \left(P_{1}^{s_{1}\rightarrow0}\cdots P_{i}^{s_{i}\rightarrow0}\dfrac{\partial\alpha}{\partial s_{i}}(s_{1},\cdots,s_{i},0,\cdots,0)\right)
	\end{array}
\end{equation}
\begin{theorem}\label{higher dimention P is a diffeomorphism}
	$P$ is a diffeomorphism.
\end{theorem}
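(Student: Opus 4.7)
The plan is to imitate the one-dimensional argument of Theorems \ref{curve P is smooth} and \ref{curve P^-1 is smooth}: show that both $P$ and $P^{-1}$ send smooth curves to smooth curves, and then invoke Theorem \ref{exponential law on manifold} to promote this to smoothness as maps of Fr\'echet (or Banach) manifolds. Bijectivity will come for free from the inductive construction of $P^{-1}$ together with the uniqueness clause of Lemma \ref{existance of P^{-1}}. The two workhorses are Lemma \ref{Parallel transport of smooth vector fields 2} (parallel transport preserves joint regularity in the presence of extra parameters) and Lemma \ref{existance of P^{-1}} (inversion of the parallel-transport equation with parameters), both designed precisely to accommodate this generalisation.

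For smoothness of $P$, take a smooth curve $\alpha \in C^\infty(\mathbb{R}, C_p^{(k_1,\ldots,k_n)}([-1,1]^n, M))$; by Theorem \ref{exponential law on manifold} its adjoint $\alpha^\wedge$ is $C^{(\infty,k_1,\ldots,k_n)}(\mathbb{R}\times[-1,1]^n, M)$. The $i$-th component of $(P\circ \alpha)^\wedge$ is obtained by restricting $\alpha^\wedge$ to $s_{i+1} = \cdots = s_n = 0$, taking $\partial/\partial s_i$, and then applying the chain $P_i^{s_i\to 0}, P_{i-1}^{s_{i-1}\to 0},\ldots,P_1^{s_1\to 0}$ of parallel transports along the corner-skeleton curves of $\alpha^\wedge$. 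The restriction and derivative yield a $C^{(\infty,k_1,\ldots,k_{i-1},k_i-1)}$ vector field, and each parallel transport preserves this class by Lemma \ref{Parallel transport of smooth vector fields 2} (with the not-yet-transported variables treated as parameters). Re-applying the exponential law turns the output into a smooth curve in $C^{(k_1,\ldots,k_i-1)}([-1,1]^i, T_pM)$.

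For $P^{-1}$, given $(v_1,\ldots,v_n)$ I would construct $\alpha$ inductively through its corner faces $\alpha^{(i)} := \alpha|_{s_{i+1}=\cdots=s_n=0}$. Theorem \ref{curve P^-1 is smooth} supplies $\alpha^{(1)} \in C^{k_1}([-1,1],M)$. Assuming $\alpha^{(i-1)}\in C^{(k_1,\ldots,k_{i-1})}$ is already built, define the vector field $w_i(s_1,\ldots,s_i) := P_1^{0\to s_1}\cdots P_{i-1}^{0\to s_{i-1}} v_i(s_1,\ldots,s_i)$ along the base map $(s_1,\ldots,s_i)\mapsto\alpha^{(i-1)}(s_1,\ldots,s_{i-1})$, whose basepoint does not depend on $s_i$. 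Iterated application of Lemma \ref{Parallel transport of smooth vector fields 2} shows $w_i \in C^{(k_1,\ldots,k_i-1)}$, and Lemma \ref{existance of P^{-1}} (with $s_i$ as the $t$-variable, the remaining $s_j$'s as parameters, and initial value $\alpha^{(i-1)}$ at $s_i = 0$) produces a unique $\alpha^{(i)}\in C^{(k_1,\ldots,k_i)}$ with $P_i^{s_i\to 0}\partial_{s_i}\alpha^{(i)} = w_i$, which unwinds to $(P(\alpha^{(n)}))_i = v_i$. Running the same inductive construction with an extra smooth parameter $r\in\mathbb{R}$ (both lemmas tolerate an extra parameter block) and using Theorem \ref{exponential law on manifold} in reverse shows that $P^{-1}$ also preserves smooth curves. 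Bijectivity is then immediate: surjectivity is the construction itself, and injectivity follows from applying the uniqueness clause of Lemma \ref{existance of P^{-1}} inductively to the corner faces of any preimage.

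The main technical obstacle will be the bookkeeping behind the iterated parallel transports that define $w_i$: to establish $w_i \in C^{(k_1,\ldots,k_i-1)}$ requires invoking Lemma \ref{Parallel transport of smooth vector fields 2} $i-1$ times, each time letting a different variable play the role of $t$ while the remaining ones (including the $r$-block when working with curves) form the parameter block, and at every stage the base curve along which the transport is performed is a slice of a previously built $\alpha^{(j)}$ whose regularity must match the hypotheses of the lemma on the nose. The statement is conceptually clear, but making the induction air-tight amounts to verifying at each step that the nested parameter structure and the base-curve regularity line up correctly.
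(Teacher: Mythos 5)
Your proposal follows essentially the same route as the paper: both directions are checked on smooth curves via the exponential law (Theorem \ref{exponential law on manifold}), smoothness of $P$ comes from the parallel-transport regularity lemmas, and $P^{-1}$ is built inductively through the faces $\gamma_i=\alpha|_{s_{i+1}=\cdots=s_n=0}$ using Lemma \ref{existance of P^{-1}} with the remaining variables as parameters, exactly as in the paper. The only point to watch is the order of the transports in $w_i$: the first transport applied to $v_i(s_1,\ldots,s_i)\in T_pM$ must be $P_1^{0\to s_1}$ (along the curve through $p$), i.e.\ the composition is $P_{i-1}^{0\to s_{i-1}}\circ\cdots\circ P_1^{0\to s_1}$, the reverse of the chain used in $P$.
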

\begin{proof}
	At first, by lemma \ref{Parallel transport of smooth vector fields}, we know that $P_{1}^{s_{1}\rightarrow0}\cdots P_{i}^{s_{i}\rightarrow0}\dfrac{\partial\alpha}{\partial s_{i}}(s_{1},\cdots,s_{i},0,\cdots,0)$ is a $C^{(k_1,\cdots,k_i-1)}$ vector field along the map $(s_{1},\cdots,s_{i})\longmapsto\alpha(0,\cdots,0)=p$, so the map $P$ is well defined. 
	For smoothness, we need to show that $P$ maps smooth curves to smooth curves.
	A curve $\beta : \mathbb{R}\longrightarrow C_{p}^{(k_1,\cdots,k_n)}([-1,1]^{n},M)$ is smooth if and only if $\beta^\wedge:\mathbb{R}\times [-1,1]^{n}\longrightarrow M,\ (t,s)\longmapsto \beta(t)(s)$ is $C^{(\infty,k_1,\cdots,k_n)}$.
	So we have $P_{1}^{s_{1}\rightarrow0}\cdots P_{i}^{s_{i}\rightarrow0}\dfrac{\partial\beta^\wedge}{\partial s_{i}}(t,s_{1},\cdots,s_{i},0,\cdots,0)$ is a  $C^{(\infty,k_1,\cdots,k_i-1)}$ vector field along the constant map $(t,s_1,\cdots,s_i)\longmapsto p$ by lemma \ref{Parallel transport of smooth vector fields}, which means that $P \circ \beta$ is a smooth curve in $\oplus_{i=1}^{n}C^{(k_1,\cdots,k_i-1)}([-1,1]^{i},T_{p}M)$.
	
	Then we define $P^{-1}$ as follows: 
	
	At first, for any $\left(v_{i} \right)_{i=1}^n \in \oplus_{i=1}^{n}C^{(k_1,\cdots,k_i-1)}([-1,1]^{i},T_{p}M)$, we know that $v_{1}$ corresponds to a $C^{k_1}$ curve $\gamma_{1}$ on $M$ by proposition \ref{The equation has the following 3 properties}.
	
	Next, we parallel transport $v_{2}$ along $\gamma_{1}$ and get a vector field $u_2:(s_1,s_2)\longrightarrow P_{\gamma_1}^{0\rightarrow s_1}v_2 (s_1,s_2)$ along the map $(s_1,s_2)\longrightarrow \gamma_1(s_1)$. Then by lemma \ref{existance of P^{-1}}, there exists a unique $C^{(k_1,k_2)}$ map $\gamma_2:[-1,1]^2\longrightarrow M$ such that $P^{s_2\rightarrow 0}_{\gamma_{s_1}}(\dfrac{\partial\gamma_{2}}{\partial s_{2}}(s_{1},s_{2}))=u_2(s_1,s_2)$
	
	Similarly, assume we do the above operation for $v_i$, and get a $C^{(k_1,\cdots,k_i)}$ map $\gamma_i :[-1,1]^i\longrightarrow M$. Then for $v_{i+1}$, let $u_{i+1}:(s_1,\cdots,s_{i+1})\longmapsto P_{i}^{0\rightarrow s_{i}}\cdots P_{1}^{0\rightarrow s_{1}} v_{i+1}(s_{1},\cdots,s_{i+1})$ and use lemma \ref{existance of P^{-1}}, we get a $C^{(k_1,\cdots,k_{i+1})}$ map $\gamma_{i+1}:[-1,1]^{i+1}\longrightarrow M$.
	
	Finally, we get a $C^{(k_1,\cdots,k_n)}$ map $\gamma_{n}:[-1,1]^{n}\longrightarrow M$ and then we can define $P^{-1}((v_i)_{i=1}^n)=\gamma_{n}$
	
	Next we prove $P^{-1}$ maps smooth curves into smooth curves. Let $\alpha$ be an arbitrary $C^\infty$ curve:
	\begin{equation}
		\begin{array}{cccc}
			\alpha: & \mathbb{R} & \longrightarrow & \oplus_{i=1}^{n}C^{(k_1,\cdots,k_i-1)}([-1,1]^{i},T_{p}M)\\
			& t & \longmapsto & \left(v_{i}(t)\right)
		\end{array}
	\end{equation}
	Then
	\begin{equation}
		\begin{array}{cccc}
			v_{i}^{\wedge}: & \mathbb{R}\times [-1,1]^{i} & \longrightarrow & T_{p}M\\
			& \left(t,s\right) & \longmapsto & v_{i}(t)(s)
		\end{array}
	\end{equation}
	is a $C^{(\infty,k_1,\cdots,k_i-1)}$ map, $\forall i\in\{1,2,\cdots,n\}$.
	
	By lemma \ref{existance of P^{-1}}, $v_{1}^{\wedge}$ corresponds to a unique $C^{k_1}$ map $\gamma_{1}:\mathbb{R}\times [-1,1]^{1}\longrightarrow M$, such that $P^{s_1\rightarrow 0}_{\gamma_{1t}}\dfrac{\partial\gamma_{1}}{\partial s_{1}}(t,s_{1})=v_1(t,s_1)$.
	
	Next, we parallel transport $v_{2}^{\wedge}$ along $\gamma_{1}(t,-)$ and get a vector field $u_2:(t,s_1,s_2)\longrightarrow P_{\gamma_{1t}}^{0\rightarrow s_1}v_2^{\wedge}(t) (s_1,s_2)$. By lemma \ref{Parallel transport of smooth vector fields 2}, $u_2$ is $C^{(\infty,k_1,k_2-1)}$. And use lemma \ref{existance of P^{-1}} again, there exists a unique $C^{(\infty,k_1,k_2)}$ map $\gamma_2:\mathbb{R}\times [-1,1]^2\longrightarrow M$ such that $P^{s_2\rightarrow 0}_{\gamma_{s_1 t}}(\dfrac{\partial\gamma_{2}}{\partial s_{2}}(t,s_{1},s_{2}))=u_2(t,s_1,s_2)$.
	
	Similarly, we know that $\gamma_i:\mathbb{R}\times [-1,1]^i\longrightarrow M$ is $C^{(\infty,k_1,\cdots,k_i)}$, which means that $P^{-1}$ maps smooth curves into smooth curves.
\end{proof}
\begin{corollary}
	$C^\infty_p([-1,1]^n,M)$ is diffeomorphic to $\oplus_{i=1}^{n}C^{\infty}([-1,1]^{i},T_{p}M)$.
\end{corollary}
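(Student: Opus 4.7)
The plan is to obtain this corollary by specializing the proof of Theorem \ref{higher dimention P is a diffeomorphism} to $C^\infty$ regularity, since all the auxiliary lemmas that feed into that theorem were stated with $k_i \geq 0$ and admit the value $\infty$ without modification. Concretely, I would define $P$ by the same parallel-transport formula, namely
\begin{equation*}
P(\alpha) = \bigl( P_1^{s_1\to 0}\cdots P_i^{s_i\to 0}\tfrac{\partial\alpha}{\partial s_i}(s_1,\dots,s_i,0,\dots,0)\bigr)_{i=1}^n,
\end{equation*}
and verify separately that $P$ and $P^{-1}$ each preserve the $C^\infty$ class and each map smooth curves to smooth curves.

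For well-definedness of $P$, I would invoke Lemma \ref{Parallel transport of smooth vector fields} with $k_1 = \cdots = k_{n+1} = \infty$ to conclude that the parallel transport of the partial derivative $\partial\alpha/\partial s_i$ along the appropriate iterated curve is $C^\infty$, so each component of $P(\alpha)$ lies in $C^\infty([-1,1]^i, T_pM)$. For smoothness of $P$ at the level of the Fréchet structure, I would test on a smooth curve $\beta : \mathbb{R} \to C^\infty_p([-1,1]^n,M)$; by Theorem \ref{exponential law on manifold} this is equivalent to $\beta^\wedge \in C^{(\infty,\infty,\dots,\infty)}(\mathbb{R}\times[-1,1]^n,M)$. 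Applying Lemma \ref{Parallel transport of smooth vector fields} once more with one extra parameter of $C^\infty$ regularity shows that $(P\circ\beta)^\wedge$ has the required smoothness in all variables, so $P\circ\beta$ is a smooth curve into the Fréchet direct sum by Lemma \ref{curves to a bundle}.

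For the inverse $P^{-1}$, I would follow the inductive construction of Theorem \ref{higher dimention P is a diffeomorphism} step by step: given $(v_i)_{i=1}^n$ with each $v_i$ of class $C^\infty$, solve at stage $i$ the ODE produced by Lemma \ref{existance of P^{-1}} with $a_1 = \cdots = a_{n+1} = \infty$ to obtain a $C^\infty$ map $\gamma_i : [-1,1]^i \to M$, and take $P^{-1}((v_i)) := \gamma_n$. To verify that $P^{-1}$ maps smooth curves in the target to smooth curves into $C^\infty_p([-1,1]^n,M)$, I would again use the exponential-law equivalence (Theorem \ref{exponential law on manifold}, Lemma \ref{curves to a bundle}) to add a parameter $t\in\mathbb{R}$, then apply Lemma \ref{Parallel transport of smooth vector fields 2} and Lemma \ref{existance of P^{-1}} with all regularity indices equal to $\infty$ at each stage of the induction, yielding a $C^{(\infty,\infty,\dots,\infty)}$ map $\mathbb{R}\times[-1,1]^n \to M$.

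The main obstacle I anticipate is not conceptual but bookkeeping: one must check that the inductive construction of $P^{-1}$, which at finite regularity loses one order of smoothness in $s_i$ at each application of Lemma \ref{existance of P^{-1}}, causes no loss when all orders are $\infty$, and that the Fréchet topology on $C^\infty_p([-1,1]^n,M)$ used in Theorem \ref{banach manifold consists of C^k maps} is precisely the projective limit under which all the finite-order diffeomorphisms assemble into a smooth bijection with smooth inverse. Once those consistency points are confirmed, the corollary follows immediately from the argument of Theorem \ref{higher dimention P is a diffeomorphism} with $k_i = \infty$ throughout.
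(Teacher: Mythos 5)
Your proposal is correct and follows essentially the same route as the paper, which offers no separate proof for this corollary and treats it as the specialization of Theorem \ref{higher dimention P is a diffeomorphism} to $k_i=\infty$ (noting that $C^\infty=\bigcap_k C^k$ so the loss of one derivative at each stage is harmless). The consistency points you flag are exactly the ones implicitly relied upon, and your verification of them is sound.
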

\begin{corollary}
	$C^{(k_1,\cdots,k_n)}_p(\mathbb{R}^n,M)$ is homeomorphic to $\oplus_{i=1}^{n}C^{(k_1,\cdots,k_i-1)}(\mathbb{R}^{i},T_{p}M)$. 
	
	In particular, $C^{\infty }_p(\mathbb{R}^n,M)$ is homeomorphic to $\oplus_{i=1}^{n}C^{\infty }(\mathbb{R}^{i},T_{p}M)$
\end{corollary}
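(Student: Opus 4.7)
The plan is to reduce this compact-open statement to the cube version already proved in Theorem \ref{higher dimention P is a diffeomorphism}, via the standard observation that the compact-open $C^{(k_1,\ldots,k_n)}$-topology on $C^{(k_1,\ldots,k_n)}(\mathbb{R}^n,\cdot)$ coincides with the initial topology with respect to the family of restriction maps $i_a^*$ associated with the rescalings $i_a\colon [-1,1]^n\to\mathbb{R}^n$, $r\mapsto ar$, $a>0$. This follows because every compact subset of $\mathbb{R}^n$ sits inside some $[-a,a]^n$, and then the defining sub-basic open sets of the compact-open $C^{(k_1,\ldots,k_n)}$ topology factor through the corresponding restriction; the same remark applies to $C^{(k_1,\ldots,k_n)}_p(\mathbb{R}^n,M)$, which is embedded as a closed subspace.

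First I would define $P'\colon C^{(k_1,\ldots,k_n)}_p(\mathbb{R}^n,M)\to\oplus_{i=1}^n C^{(k_1,\ldots,k_i-1)}(\mathbb{R}^i,T_pM)$ by exactly the same iterated parallel-transport formula as the $P$ of Theorem \ref{higher dimention P is a diffeomorphism}. Well-definedness of each component follows from Lemma \ref{Parallel transport of smooth vector fields}, which is local in the base parameter and so applies verbatim over $\mathbb{R}^n$. For the inverse, the genuinely non-trivial input is global existence: starting from $(v_i)_i$, one obtains $\gamma_1\colon\mathbb{R}\to M$ via Proposition \ref{The equation has the following 3 properties}(3) and then inductively applies Lemma \ref{existance of P^{-1}}, extended from cubes to arbitrary rectangles $\mathbb{R}^m\times[-1,1]^{n-m}$ by the remark following it, plus the same completeness argument used to extend existence in the last coordinate from $[-1,1]$ to $\mathbb{R}$. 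This gives a global $\gamma_n\colon\mathbb{R}^n\to M$, and hence a well-defined set-theoretic inverse $P'^{-1}$.

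Next I would establish, for every $a>0$, the commutativity relation
\begin{equation*}
i_a^*\circ P' \;=\; a^{-1}\bigl(P\circ i_a^*\bigr),
\end{equation*}
where on the right $a^{-1}$ denotes componentwise scalar multiplication in the Fr\'echet space $\oplus_i C^{(k_1,\ldots,k_i-1)}([-1,1]^i,T_pM)$. The proof is a direct chain-rule computation: for $\tilde\alpha=\alpha\circ i_a$ one has $\partial_{s_j}\tilde\alpha(r)=a\,\partial_{s_j}\alpha(ar)$, while each of the nested parallel transports $P_1^{s_1\to 0}\cdots P_i^{s_i\to 0}$ depends only on the image of the parametrizing curve, so reparametrizing by $i_a$ leaves these transports intact apart from the relabeling $s_j\mapsto ar_j$. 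Since $P$ is a homeomorphism (Theorem \ref{higher dimention P is a diffeomorphism}), and multiplication by $a^{-1}$ and restriction are both continuous, the identity above shows that $i_a^*\circ P'$ is continuous for every $a$, hence $P'$ is continuous by the initial-topology characterization. Rewriting the same identity as $i_a^*\circ (P')^{-1}=P^{-1}\circ(a\cdot i_a^*)$ gives continuity of $(P')^{-1}$.

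The main obstacle is the bookkeeping of the rescaling under the nested parallel-transport formula: one has to see clearly that although there are $i$ composed transports at the $i$th level, only one factor of $a$ appears (from the single outer derivative $\partial_{s_i}$), since parallel transport is reparametrization-invariant. Once this is in hand, the $C^\infty$ case follows by taking inverse limits in $k$: $C^\infty_p(\mathbb{R}^n,M)=\bigcap_k C^k_p(\mathbb{R}^n,M)$ carries the projective-limit topology, and the homeomorphism $P'$ is compatible with this intersection, producing the stated homeomorphism $C^\infty_p(\mathbb{R}^n,M)\simeq\oplus_{i=1}^n C^\infty(\mathbb{R}^i,T_pM)$.
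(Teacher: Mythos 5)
Your proposal is correct and follows essentially the same route as the paper: conjugating $P'$ by the rescalings $j_a^i:[-1,1]^i\to\mathbb{R}^i$ to reduce to the cube diffeomorphism of Theorem \ref{higher dimention P is a diffeomorphism}, checking that the reparametrization-invariance of parallel transport makes the conjugate equal to $a^{-1}P$, and concluding continuity of $P'$ and $(P')^{-1}$ from the initial-topology description of the compact-open $C^{(k_1,\dots,k_n)}$ topology. Your added remarks on global existence of the inverse and on the $C^\infty$ case as a projective limit are consistent with, and slightly more explicit than, what the paper records.
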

\begin{proof}
	For any $j_a^i:[-1,1]^i\longrightarrow \mathbb{R}^i$, $j_a^i:v\longrightarrow av$, where $a> 0$ is a real number, we have the following commute diagram: 
	
	\begin{tikzcd}
		{C^{(k_1,\cdots,k_n)}_p(\mathbb{R}^n,M)} \arrow[r, "P'"] \arrow[d, "j_a^{n*}"] & {\oplus_{i=1}^{n}C^{(k_1,\cdots,k_i-1)}(\mathbb{R}^{i},T_{p}M)} \arrow[d, "\tilde{j_a^*}"] \\
		{C^{(k_1,\cdots,k_n)}_p([-1,1]^n,M)} \arrow[r, "a^{-1}P"]                            & {\oplus_{i=1}^{n}C^{(k_1,\cdots,k_i-1)}([-1,1]^{i},T_{p}M)}                          
	\end{tikzcd}
	
	where $a^{-1}P$ is just $P$ multiplied by $a^{-1}$.
	
	So $\tilde{j_a^*}\circ P'=a^{-1}P\circ j_a^{n*}$ is continuous, hence $P'$ is continuous by the definition of the $C^{(k_1,\cdots,k_i-1)}$ compact open topology.
	
	$P'^{-1}$ is defined as in theorem \ref{higher dimention P is a diffeomorphism}.
	
	Then we prove $P'^{-1}$ is continuous, which is equivalent to $\forall a\in \mathbb{R},\ j_a^{n*}\circ P'^{-1}$ is continuous. But $j_a^{n*}\circ P'^{-1}=(a^{-1}P)^{-1}\circ \tilde{j_a^*}$, which is continuous, so we are done.
\end{proof}

\section{Fibre bundles between mapping space}
\begin{theorem}\label{A}
	Let $i:A\hookrightarrow X$ be a cofibration, where $A$ is a compact topological space and $M$ is a manifold. Then $i^{*}:C(X,M)\longrightarrow C(A,M)$ is a locally trivial fiber bundle on any connected component of $C(A,M)$.
\end{theorem}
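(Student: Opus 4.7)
The plan is to exhibit a local trivialization of $i^{*}$ around each $f_{0}\in C(A,M)$ by combining an exponential chart on $C(A,M)$ with the neighborhood-deformation-retract (NDR) structure supplied by the cofibration. If $(i^{*})^{-1}(f_{0})=\emptyset$, then on the connected component of $f_{0}$ the restriction of $i^{*}$ is vacuously a fiber bundle with empty fiber; so one may fix $g_{0}\in(i^{*})^{-1}(f_{0})$. Equip $M$ with a complete Riemannian metric and a compatible connection $\nabla$, and choose an open neighborhood $O\subset TM$ of the zero section on which $\pi_{M}\times\exp\colon O\to M\times M$ is a diffeomorphism onto an open neighborhood of the diagonal. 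By compactness of $A$, the set $U_{f_{0}}:=\{f\in C(A,M):(f_{0}(a),f(a))\in(\pi_{M}\times\exp)(O),\ \forall a\in A\}$ is an open neighborhood of $f_{0}$, and $\phi(f)(a):=\exp_{f_{0}(a)}^{-1}f(a)$ identifies $U_{f_{0}}$ with an open subset of $\Gamma(A,f_{0}^{*}TM)$.

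Next, the cofibration yields an NDR structure on $(X,A)$: an open neighborhood $W\supset A$, a deformation retraction $H\colon W\times I\to W$ with $H_{1}(W)\subseteq A$ and $H|_{A\times I}=\mathrm{id}_{A}$, and a continuous $\psi\colon X\to[0,1]$ with $\psi|_{A}\equiv1$ and $\psi\equiv0$ off $W$. For a section $\sigma\in\Gamma(A,f_{0}^{*}TM)$ and a point $g\in(i^{*})^{-1}(f_{0})$, define a continuous extension $\tilde{\sigma}_{g}\in\Gamma(X,g^{*}TM)$ by
\[
\tilde{\sigma}_{g}(x):=\psi(x)\,P_{g,x}\bigl(\sigma(H_{1}(x))\bigr)\text{ for }x\in W,\qquad\tilde{\sigma}_{g}(x):=0\text{ otherwise},
\]
where $P_{g,x}$ is $\nabla$-parallel transport along the path $t\mapsto g(H_{1-t}(x))$ from $g(H_{1}(x))=f_{0}(H_{1}(x))$ (using $g|_{A}=f_{0}$) to $g(x)$. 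Then set
\[
\Phi\colon U_{f_{0}}\times(i^{*})^{-1}(f_{0})\to(i^{*})^{-1}(U_{f_{0}}),\qquad\Phi(f,g)(x):=\exp_{g(x)}\bigl(\widetilde{\phi(f)}_{g}(x)\bigr).
\]
Pointwise verification gives $\Phi(f,g)|_{A}=f$ (using $\psi|_{A}=1$, $H_{1}|_{A}=\mathrm{id}$, triviality of parallel transport along constant paths, and the defining property $\exp_{f_{0}(a)}\phi(f)(a)=f(a)$), and $\Phi(f_{0},g)=g$ (since $\phi(f_{0})=0$). Continuity of $\Phi$ in both variables follows from smoothness of $\exp$ and of parallel transport in its path argument, continuity of the NDR data $(H,\psi)$, and the exponential law for the compact interval (Lemma \ref{exponential law for Y locally compact}).

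The inverse $\Phi^{-1}$ is characterized by the implicit equation $\exp_{g(x)}(\widetilde{\phi(f)}_{g}(x))=\tilde{g}(x)$ with $f=\tilde{g}|_{A}$: outside $W$ this forces $g(x)=\tilde{g}(x)$, and on $W$ it is a pointwise equation inside the tubular chart $O$, which admits a unique continuous solution after shrinking $U_{f_{0}}$ so that all intermediate tangent vectors stay in $O$, by bicontinuity of $\pi_{M}\times\exp$. I expect the main technical obstacle to be precisely this verification that $\Phi$ is a homeomorphism: the construction couples $g$ through parallel transport along $g$-dependent paths, so the candidate inverse is only implicitly defined, and the round-trip identities $\Phi\circ\Phi^{-1}=\mathrm{id}$ and $\Phi^{-1}\circ\Phi=\mathrm{id}$ are non-tautological. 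The cleanest resolution is a pointwise analysis in the tubular chart $O$, leveraging that $\Phi(f_{0},\cdot)=\mathrm{id}$ and propagating invertibility by continuity to a full neighborhood $U_{f_{0}}$; this yields the required bi-continuous local trivialization and hence the fiber bundle structure on each connected component of $C(A,M)$.
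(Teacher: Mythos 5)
Your overall strategy (move the points of $M$ to trivialize $i^*$) is in the same spirit as the paper's, but two steps in your construction do not go through as written. The first is fatal to the formula itself: you define $\tilde{\sigma}_{g}(x)$ by $\nabla$-parallel transport $P_{g,x}$ along the path $t\mapsto g(H_{1-t}(x))$. Here $g\in C(X,M)$ and $H$ are merely continuous, so this path is merely continuous, and parallel transport along it is undefined --- it requires solving $\nabla_{\dot{\gamma}}V=0$, hence at least a piecewise $C^1$ path. (This is why the paper's map $P$ in Chapter 1 is only defined on $C^k$ curves with $k\geq 1$.) The paper avoids this entirely: it never differentiates the continuous maps in question. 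Instead, for each pair $(p,q)$ with $q$ near $p$ it builds a compactly supported vector field $Y(p,q)$ on $M$, depending smoothly on $(p,q)$, whose time-one flow carries $p$ to $q$; the ODE being integrated lives on the smooth manifold $M$ and only depends continuously on the parameters $(g,a)$. This yields a continuous family of diffeomorphisms $\varphi_{g,a}\in \mathrm{Diff}(M)$, and the cofibration hypothesis is used through the homotopy extension property applied to the resulting map $U_{f}\times A\times I\to \mathrm{Diff}(M)$ (extended by the constant map $\mathrm{id}_M$ on $U_{f}\times X\times\{0\}$), rather than through an explicit NDR formula with a cut-off $\psi$ and retraction $H_1$.

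The second gap is the one you flag yourself but do not close: bijectivity of $\Phi(f,\cdot)$. Because $\tilde{\sigma}_{g}$ is coupled to $g$ through $g$-dependent paths, the equation $\Phi(f,g)=\tilde{g}$ is a genuine fixed-point problem (the transport $P_{g,x}$ depends on $g$ along the whole track $\{H_s(x)\}$, so it is not even pointwise), and neither injectivity nor surjectivity follows from $\Phi(f_{0},\cdot)=\mathrm{id}$ by ``propagating invertibility by continuity'': the fiber $(i^{*})^{-1}(f_{0})$ is a bare topological space, and no inverse or implicit function theorem is available for merely continuous maps in this setting. The paper's trivialization $F(g,\tilde{f})(x)=\varphi_{g,x}(\tilde{f}(x))$ is, for each fixed $x$, postcomposition with a single diffeomorphism of $M$, so invertibility is automatic and the inverse is the explicit formula $\tilde{g}\longmapsto \bigl(i^{*}(\tilde{g}),\ x\mapsto \varphi_{i^{*}(\tilde{g}),x}^{-1}(\tilde{g}(x))\bigr)$; all that remains is continuity of both directions, which is checked via the exponential-law lemmas. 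To repair your argument you would need to replace the tangent-vector extension $\tilde{\sigma}_{g}$ by an extension of ambient diffeomorphisms (or otherwise decouple the extension from the unknown $g$), which is essentially what the paper's use of $\mathrm{Diff}(M)$-valued HEP accomplishes.
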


Choose a complete Riemann metric $g$ on $M$. 
The exponential map is an embedding near the origin, so there is continuous map $r_0:M\longrightarrow \mathbb{R}_{>0}$, such that the restriction of exponential map $Exp|_{B_{r_0(p)}(0)\subseteq T_p M}$ is an embedding.
For $A$ is a compact set, so $\forall f\in C(A,M)$, $\exists \varepsilon=\frac{1}{2}\underset{a\in A}{\inf}\ r_{0}(f(a))>0$, such that $\forall a\in A$, $Exp_{f(a)}:B_{\varepsilon}(0)\subseteq T_{f(a)}M \longrightarrow M$ is an embedding. And then there is a unique shortest geodesic from $f(a)$ to $q$ for all $q$ such that $d(q,f(a))<\varepsilon$.

Since $A$ is compact, and $(M,d_M)$ is a metric space, so $C(A,M)$ is a metric space, $d(f,g):=\underset{a\in A}{\sup}\ d_{M}(f(a),g(a))$, we can take the $\varepsilon$-neighborhood $U_f$ of $f$. And next we are going to prove $U_f \times (i^*)^{-1}(f)\cong(i^*)^{-1}(U_f)$  

For any map $\tilde{f}\in C(X,M)$, $\tilde{f}$ can be viewed as a section from $X$ to the trivial fiber bundle $X\times M$. We  want to construct a automorphism $\varphi_g :X\times M\longrightarrow X\times M$ for every $g\in U_f$, such that $(\varphi_g)_*$ maps the fiber $(i^*)^{-1}(f)$ to the fiber  $(i^*)^{-1}(g)$ homeomorphicly.
$$\begin{array}{cccc}
	(\varphi_{g})_{*}: & \varGamma(X,X\times M) & \longrightarrow & \varGamma(X,X\times M)\\
	& \tilde{f} & \longrightarrow & \varphi_{g}\circ\tilde{f}\end{array}$$

$\varphi_{g}$ can be construct from a family of automorphisms of $M$, denoted by $\varphi_{g,x}:M\longrightarrow M$.
To construct $\varphi_{g,x}$, we construct $\varphi_{p,q}$ first.
\subsection{Construction of $\varphi_{p,q}$}
In this section, we will construct an automrophism $\varphi_{p,q}$ of $M$ for every $p\in M$ and $q$ near $p$, such that $\varphi_{p,q}$ smoothly depend on $(p,q)$ and $\varphi_{p,q}(p)=q$.

Let $q$ be a point near $p$, $d(p,q)<\frac{1}{2}r_0(p)$. $\gamma_{p,q}:I\longrightarrow M$ is the minimal geodesic from $p$ to $q$. $\dfrac{d}{dt}\gamma_{p,q}$ is the tangent vector field along $\gamma_{p,q}$, which can be extend to a compact support vector field $Y(p,q)$ on $M$. Then the corresponding one parameter diffeomorphism group $\varphi_{p,q,t}$ satisfies $\varphi_{p,q,1}(p)=q$. Now we are going to construct $Y(p,q)$, such that
\begin{equation}
	\begin{array}{cccc}
		Y: & U\subseteq M\times M & \longrightarrow & \mathfrak{X}_c(M)\\
		& (p,q) & \longrightarrow & Y(p,q)
	\end{array}
\end{equation} 
is a smooth map, where $U=\{(p,q)\in M\times M\mid d(p,q)<\frac{1}{2}r_0(p)\}$.

Let $q'=Exp_{p}^{-1}(q)\in T_{p}M$ and if $d(m,p)\leq \frac{3}{4}r_0(p)$, let $m'=Exp_{p}^{-1}(m)$.
\begin{proposition}
	\begin{equation}
		\begin{array}{ccccc}
			(\pi_M\times Exp)^{-1}: & U\subseteq M\times M & \longrightarrow & TM & \ is\ a\ C^{\infty}\ map\\
			& (p,q) & \longmapsto & (p,q')
		\end{array}
	\end{equation}
\end{proposition}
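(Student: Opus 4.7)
The goal is to upgrade the set-theoretic inverse of $\pi_M \times Exp$ on $U$ to a smooth map, so the plan is to apply the inverse function theorem to the smooth map $\pi_M \times Exp : TM \to M \times M$ on a suitable open neighborhood of the zero section, and then observe that the image of this neighborhood contains $U$.

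First, I would set $V := \{(p,v) \in TM : \|v\|_g < r_0(p)\}$; this is open in $TM$ because $r_0$ is continuous. The hypothesis on $r_0$ ensures that $\pi_M \times Exp$ is injective on $V$: equality of first coordinates gives a common basepoint $p$, and $Exp_p|_{B_{r_0(p)}(0)}$ being an embedding forces the two tangent vectors to agree.

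Next I would verify that at every $(p,v) \in V$ the differential $d(\pi_M \times Exp)_{(p,v)}$ is a linear isomorphism. Using any horizontal--vertical splitting $T_{(p,v)}TM \cong T_pM \oplus T_pM$, the differential is block lower-triangular with diagonal blocks $\mathrm{id}_{T_pM}$ and $dExp_p|_v$; the second block is invertible because $Exp_p$ is an embedding on $B_{r_0(p)}(0)$. Combined with the global injectivity just established, this makes $\pi_M \times Exp : V \to M \times M$ a diffeomorphism onto the open subset $(\pi_M \times Exp)(V) \subseteq M \times M$.

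Finally, I would check $U \subseteq (\pi_M \times Exp)(V)$: any $(p,q) \in U$ satisfies $d(p,q) < \frac{1}{2} r_0(p) < r_0(p)$, so the minimal geodesic from $p$ to $q$ produces a vector $v \in T_pM$ with $\|v\| = d(p,q) < r_0(p)$ and $Exp_p(v) = q$; hence $(p,q) = (\pi_M \times Exp)(p,v)$. Restricting the smooth inverse to $U$ yields the map $(p,q) \mapsto (p,q')$, proving it is $C^\infty$. The only slightly delicate point is the upgrade from "injective local diffeomorphism" to "diffeomorphism onto its open image," but this is a standard consequence of the inverse function theorem, so I do not foresee any real obstacle.
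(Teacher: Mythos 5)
Your proposal is correct and follows essentially the same route as the paper, which simply asserts that $\pi_M\times Exp$ is a diffeomorphism on the region $\|v\|<r_0(p)$ and concludes that the inverse is $C^\infty$ on $U$. You merely supply the details the paper leaves implicit (injectivity on $V$, invertibility of the block-triangular differential, and the surjectivity of $V$ onto $U$ via minimal geodesics from completeness), all of which are sound.
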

\begin{proof}
	The exponential map 
	\begin{equation}
		\begin{array}{cccc}
			\pi_M\times Exp: & TM & \longrightarrow & M\times M\\
			& (p,v) & \longmapsto & (p,Exp_{p}(v))
		\end{array}
	\end{equation}
	is a diffeomorphism when $||v||< r_0(p)$, so the inverse map $(\pi_M\times Exp)^{-1}$ is $C^\infty$ on $U$.
\end{proof}
For every $m\in B_{\frac{3}{4}r_0(p)}(p)$, define $Y_1(p,q)(m)$ as follows:
\begin{equation}\label{definition of Y_1(p,q)(m)}
	Y_1(p,q)(m):=\frac{d}{dt}|_{t=0}Exp_{p}(m'+tq')\in TM
\end{equation}
By the definition of $m'$, the curve $t\mapsto Exp_{p}(m'+tq')$ starts at $m$, so $Y_1(p,q)(m)\in T_mM$.
Let $\rho_2$ be a smooth cut-off function on $M\times M$ such that 
\begin{equation}
	\rho_{2}(p,m)=\begin{cases}
		1 & if\ d(p,m)\leq\dfrac{1}{2}r_{0}(p)\\
		0 & if\ d(p,m)\geq\dfrac{2}{3}r_{0}(p)
	\end{cases}
\end{equation}
Let 
\begin{equation}\label{definition of Y(p,q)}
	Y(p,q)(m)=\begin{cases}
		\rho_{2}(p,m)Y_1(p,q)(m) & if\ d(p,m)\leq\dfrac{2}{3}r_{0}(p)\\
		0 & if\ d(p,m)\geq\dfrac{2}{3}r_{0}(p)
	\end{cases}
\end{equation}
Then $Y(p,q)$ is a compact support vector field and the map $Y:(p,q,m)\mapsto Y(p,q)(m)$ is smooth.

\subsection{Construction of $\varphi_{g,x}$}
\begin{lemma}\label{infinite dimensional exponential map}
 Let $X\in C^{(k_1,\cdots,k_n,\infty)}(U\times M,TM)$, where $U\subseteq[0,+\infty)^{n}$
\begin{equation}
\begin{array}{cccc}
X: & U\times M & \longrightarrow & TM\\
 & (\lambda_{1},\cdots,\lambda_{n},m) & \longmapsto & X(\lambda_{1},\cdots,\lambda_{n},m)
\end{array}
\end{equation}
Suppose for each fixed $\lambda\in U$, $X_{\lambda}:m\mapsto X(\lambda,m)$ is a compact support vector field, and $\varphi(\lambda,t)$ is the one parameter diffeomorphism group corresponds to $X_{\lambda}$, then the map $\varphi^{\wedge}:(\lambda,t,m)\mapsto \varphi(\lambda,t)(m)$ is $C^{(k_1,\cdots,k_n,\infty,\infty)}$. 
\end{lemma}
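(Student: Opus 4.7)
The plan is to reduce the statement to a local ODE problem in coordinates, apply Lemma~\ref{smoothness of solution to ode}, and then globalize via the flow (semigroup) property together with a Lebesgue-number argument in the spirit of Lemma~\ref{lemma lebesgue number} and Lemma~\ref{existance of P^{-1}}.

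I would start by fixing a target point $(\lambda_0, t_0, m_0)$ and first treating the case when $t$ is close to $0$. Choose a coordinate chart $(U_0, \psi_0)$ near $m_0$. In this chart the flow $\varphi^{\wedge}$ satisfies
\[
\frac{d y^i}{dt}(\lambda, t, m) = X^i(\lambda, y(\lambda, t, m)), \qquad y(\lambda, 0, m) = \psi_0(m),
\]
with $(\lambda, m)$ playing the role of parameters. Because $X$ is $C^{(k_1,\dots,k_n,\infty)}$ in $(\lambda, m)$, the right-hand side, viewed as a function of $(t, y, \lambda, m)$, is $C^{(\infty, \infty, k_1, \dots, k_n, \infty)}$, while the initial datum $\psi_0(m)$ is smooth in $m$ and independent of $\lambda$. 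Applying Lemma~\ref{smoothness of solution to ode} (with $\lambda$ and the local coordinates of $m$ together as parameters) produces a solution $y(\lambda, t, m)$ of class $C^{(\infty, k_1, \dots, k_n, \infty)}$ in $(t, \lambda, m)$; reordering the arguments gives $\varphi^{\wedge}$ of class $C^{(k_1, \dots, k_n, \infty, \infty)}$ on a neighborhood of $(\lambda_0, 0, m_0)$.

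To pass from $t$ small to $t$ near $t_0$, I would invoke the semigroup identity $\varphi(\lambda, s+t) = \varphi(\lambda, s) \circ \varphi(\lambda, t)$. The trajectory $\{\varphi(\lambda_0, s)(m_0) : s \in [0, t_0]\}$ is compact, so by Lemma~\ref{lemma lebesgue number} there is a partition $0 = s_0 < s_1 < \cdots < s_N = t_0$ such that each $\varphi(\lambda_0, [s_i, s_{i+1}])(m_0)$ sits in a single coordinate chart. Applying the local step at each way-point $\varphi(\lambda_0, s_i)(m_0)$ and each $\lambda$ close to $\lambda_0$ yields short-time smooth flow pieces $\Phi_i$. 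For $(\lambda, t, m)$ near $(\lambda_0, t_0, m_0)$ one can then write
\[
\varphi^{\wedge}(\lambda, t, m) = \Phi_{N-1}\bigl(\lambda,\, t - s_{N-1},\, \Phi_{N-2}(\lambda,\, s_{N-1} - s_{N-2},\, \cdots \Phi_0(\lambda,\, s_1,\, m) \cdots)\bigr),
\]
reducing global smoothness to a composition statement.

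The main technical obstacle will be to verify rigorously that composition of maps of class $C^{(k_1, \dots, k_n, \infty, \infty)}$ remains in the same class. The key observation is that the finite multi-index regularity lives only in $\lambda$, whereas the $t$- and $m$-directions are $C^{\infty}$; each chain-rule differentiation in $\lambda$ consumes only one order of $\lambda$-regularity, and differentiations in $t$ or $m$ are unrestricted. An induction on the number of composed pieces closes the argument, and the case $t_0 < 0$ is handled symmetrically by running the flow of $-X$.
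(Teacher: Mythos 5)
Your proposal is correct, and its local step is identical to the paper's: write the flow equation in a chart with $(\lambda,m)$ as parameters and invoke Lemma~\ref{smoothness of solution to ode}. Where you diverge is in the globalization. The paper partitions $[0,t'+1]$ so that each arc of the reference integral curve lies in a chart, and then at each step simply re-applies Lemma~\ref{smoothness of solution to ode} with the previously obtained value $\varphi^{\wedge}(\lambda,t_{i+1},m)$ serving as the new, parameter-dependent initial datum; since that lemma already permits the initial value $y_0(\lambda)$ to depend on the parameters with $C^{(k_1,\dots,k_n)}$ regularity, the needed regularity of the solution on the next time block comes out directly and no composition of flows ever appears. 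You instead write $\varphi^{\wedge}$ as an explicit composition of short-time flow pieces via the semigroup identity, which forces you to prove an auxiliary closure statement: that composition of maps with finite multi-index regularity in $\lambda$ and $C^{\infty}$ regularity in the remaining slots stays in the same class. That statement is true, and your sketch of it (each $\lambda$-differentiation consumes one order of $\lambda$-regularity, inducting on the total order, in the spirit of the paper's later lemma on composition of partially smooth maps) can be completed, but it is genuinely extra work, and to carry it out on a manifold you also need to arrange that the composed points remain in the relevant charts for all $(\lambda,m)$ near $(\lambda_0,m_0)$ --- a routine but nontrivial compactness argument. The paper's packaging buys a shorter proof by letting the ODE lemma absorb the composition; your packaging buys a reusable general fact about the anisotropic smoothness classes. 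If you want to shorten your argument, replace the composition of the pieces $\Phi_i$ by solving the ODE afresh on $[s_i,s_{i+1}]$ with initial condition $\varphi^{\wedge}(\lambda,s_i,m)$ and citing Lemma~\ref{smoothness of solution to ode} once more.
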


\begin{proof}
Let
\begin{equation}
\begin{array}{cccc}
\gamma_{\lambda,m}: & \mathbb{R} & \longrightarrow & M\\
 & t & \longmapsto & \varphi(\lambda,t)(m)
\end{array}
\end{equation}
be the integral curve start at $m$. Without loss of generality, we can assume $t'>0$, and we are going to show that $\varphi^{\wedge}$ is $C^{(k_1,\cdots,k_n,\infty,\infty)}$ near $(\lambda',t',m')$. 

Take a partition $0=t_0<t_1<\cdots<t_j=t'+1$ such that each segment $\gamma_{\lambda',m'}([t_i,t_{i+1}])$ is contained in some coordinate chart $(U_i,\psi_i)$ on $M$.
In the local coordinate $(U_0,\psi_0)$ on $M$, we have an ordinary differential equation corresponds to $X$
\begin{equation}
	\begin{cases}
		\dfrac{d\varphi^{\wedge}}{dt}(\lambda,t,m)=X(\lambda,\varphi^{\wedge}(r,t,m))\\
		\varphi^{\wedge}(\lambda,0,m)=m
	\end{cases}
\end{equation}
and $\varphi^{\wedge}(\lambda',t,m')$ exists for $t\in[0,t_1]$. So $\exists \varepsilon_{0}>0$ such that the solution $\varphi^{\wedge}(\lambda,t,m)$ can be defined on $B_{\varepsilon_{0}}(\lambda')\times [0,t_1]\times B_{\varepsilon_{0}}(m')$, where $B_{\varepsilon_{0}}(m')$ is a small ball under local coordinate. By lemma \ref{smoothness of solution to ode}, $\varphi^{\wedge}(\lambda,t,m)$ is $C^{(k_1,\cdots,k_n,\infty,\infty)}$ on $B_{\varepsilon_{0}}(\lambda')\times [0,t_1]\times B_{\varepsilon_{0}}(m')$ because $X$ is $C^{(k_1,\cdots,k_n,\infty)}$.

By induction, we can assume $\varphi^{\wedge}(\lambda,t,m)$ is $C^{(k_1,\cdots,k_n,\infty,\infty)}$ on $B_{\varepsilon_i}(\lambda')\times [0,t_{i+1}]\times B_{\varepsilon_i}(m')$, and we are going to prove that $\exists \varepsilon_{i+1}>0 $, such that $\varphi^{\wedge}(\lambda,t,m)$ is $C^{(k_1,\cdots,k_n,\infty,\infty)}$ on $B_{\varepsilon_{i+1}}(\lambda')\times [0,t_{i+2}]\times B_{\varepsilon_{i+1}}(m')$.

In local coordinate $(U_{i+1},\psi_{i+1})$ on $M$, we have the ordinary differential equation defined for $(\lambda,t,\varphi^{\wedge})\in U\times\mathbb{R}\times U_{i+1}$:
\begin{equation}
		\dfrac{d\varphi^{\wedge}}{dt}(\lambda,t,m)=X(\lambda,\varphi^{\wedge}(\lambda,t,m))\\	
\end{equation}
and the initial value is $\varphi^{\wedge}(\lambda,t_{i+1},m)$, which is $C^{(k_1,\cdots,k_n,\infty)}$ with respect to $(\lambda,m)$ when $(\lambda,m)\in B_{\varepsilon_i}(\lambda')\times B_{\varepsilon_i}(m')$, so $\exists \varepsilon_{i+1}>0$ such that $\varphi^{\wedge}(\lambda,t,m)$ is $C^{(k_1,\cdots,k_n,\infty,\infty)}$ on $B_{\varepsilon_{i+1}}(\lambda')\times [0,t_1]\times B_{\varepsilon_{i+1}}(m')$.

Finally, we know that $\varphi^{\wedge}(\lambda,t,m)$ is $C^{(k_1,\cdots,k_n,\infty,\infty)}$ on $B_{\varepsilon_{j-1}}(\lambda')\times [0,t'+1]\times B_{\varepsilon_{j-1}}(m')$, which completes the proof.
\end{proof}

\begin{lemma}\label{ev is continuous}
	\begin{equation}
		\begin{array}{cccc}
			ev: & C(A,M)\times A & \longrightarrow & M\\
			& (g,a) & \longmapsto & g(a)
		\end{array}
	\end{equation}
	is a continuous map.
\end{lemma}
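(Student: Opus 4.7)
The plan is to give a straightforward $\varepsilon$--$\delta$ argument using the sup-metric on $C(A,M)$ that is already in force in this section, namely $d(f,g) = \sup_{a\in A} d_M(f(a),g(a))$ (this is well defined and makes $C(A,M)$ metric because $A$ is compact and $M$ is metrizable). Given a point $(g_0,a_0)\in C(A,M)\times A$ and an open neighborhood $V\subseteq M$ of $g_0(a_0)$, I want to exhibit an open box $W\times U\subseteq C(A,M)\times A$ containing $(g_0,a_0)$ with $ev(W\times U)\subseteq V$.

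First I would choose $\varepsilon>0$ with $B_{2\varepsilon}(g_0(a_0))\subseteq V$. Since $g_0:A\to M$ is continuous at $a_0$, there exists an open neighborhood $U$ of $a_0$ in $A$ such that $g_0(U)\subseteq B_{\varepsilon}(g_0(a_0))$. Then I would take $W$ to be the open ball $B_{\varepsilon}(g_0)\subseteq C(A,M)$ with respect to the metric $d$. For any $(f,a)\in W\times U$, the triangle inequality gives
\begin{equation*}
d_M(f(a),g_0(a_0)) \;\le\; d_M(f(a),g_0(a)) + d_M(g_0(a),g_0(a_0)) \;<\; \varepsilon + \varepsilon \;=\; 2\varepsilon,
\end{equation*}
so $ev(f,a)=f(a)\in V$. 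This shows $ev^{-1}(V)$ is open and hence $ev$ is continuous.

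There is no real obstacle here; the only thing worth flagging is the need to justify that the metric $d$ on $C(A,M)$ actually induces the topology being used (this is standard since $A$ is compact: the sup-metric topology coincides with the compact-open topology into the metric space $M$). Once that is noted, the proof is a one-line triangle inequality, essentially the same as the classical fact that evaluation $C(A,M)\times A\to M$ is continuous whenever $A$ is locally compact Hausdorff, which in particular applies to our compact $A$. Alternatively, one could simply appeal to Lemma \ref{exponential law for Y locally compact} applied to the identity in $C(C(A,M),C(A,M))$ to get continuity of $ev=\mathrm{id}^\wedge$, but the direct $\varepsilon$--$\delta$ argument above is the most transparent.
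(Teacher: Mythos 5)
Your proof is correct and is essentially identical to the paper's: the paper's one-line argument is precisely the inclusion $B_{\varepsilon}(g_{0})\times g_{0}^{-1}(B_{\varepsilon}(g_{0}(a_{0})))\subseteq ev^{-1}(B_{2\varepsilon}(g_{0}(a_{0})))$, which is the same triangle-inequality estimate you write out explicitly. Your remark that the sup-metric induces the compact-open topology is a reasonable point to flag, but no further changes are needed.
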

\begin{proof}
	For $C(A,M)$ and $M$ are metric space, and $\forall (g_0,a_0)\in C(A,M)\times A,\ \forall \varepsilon>0,\  B_{\varepsilon}(g_{0})\times g_{0}^{-1}(B_{\varepsilon}(g_{0}(a_{0})))\subseteq ev^{-1}(B_{2\varepsilon}(g_{0}(a_{0})))$, so the map $ev$ is continuous
\end{proof}
Remember $U_f$ is the $\varepsilon-$neighborhood of $f\in C(A,M)$. Define a map $T:=Y\circ(f\times ev)$:
\begin{equation}
	\begin{array}{cccccc}
		T: & U_{f}\times A & \overset{f\times ev}{\longrightarrow} & M\times M & \overset{Y}{\longrightarrow} & \mathfrak{X}_c(M)\\
		& (g,a) & \longmapsto & \left(f(a),g(a)\right) & \longmapsto & Y\left(f(a),g(a)\right)
	\end{array}
\end{equation}
By lemma \ref{ev is continuous}, $T$ is a continuous map, and $T_{g,a}$ is a compact support smooth vector field. The corresponding one parameter diffeomorphism group $\varphi_{g,a,t}$ satisfies $\varphi_{g,a,1}(f(a))=g(a)$.
Now we get a map:
\begin{equation}
	\begin{array}{cccc}
		\varphi: & U_{f}\times A\times I & \longrightarrow & Diff(M)\\
		& (g,a,t) & \longmapsto & \varphi_{g,a,t}
	\end{array}
\end{equation}
Now we prove that $\varphi$ is continuous because the exponential map $exp:\mathfrak{X}_c(M)\longrightarrow Diff(M)$ is continuous. Here $Diff(M)\subseteq C(M,M)$ has the compact open topology (as a subspace of $C(M,M)$).
We want to prove $exp$ maps convergent sequences into convergent sequences, and because every convergent sequences in $\mathfrak{X}_c(M)$ can be connected with a continuous curve, we only need to prove $exp$ maps continuous curves into continuous curves. Let
$\alpha:\mathbb{R}\longrightarrow \mathfrak{X}_c(M)$
be a continuous curve, then $\alpha^{\wedge}:(r,m)\mapsto \alpha(r)(m)$ is a $C^{(0,\infty)}$ map, so by lemma \ref{infinite dimensional exponential map}, the corresponding diffeomorphism groups $\psi_{r,t}$ satisfies$\psi^{\wedge}:(r,t,m)\mapsto\psi_{r,t}(m)$ is $C^{(0,\infty,\infty)}$. Then $\psi:(r,t)\mapsto\psi_{r,t}$ is continuous. So the map $exp\circ\alpha:r\mapsto \psi_{r,1}$ is continuous.

For $i:A\hookrightarrow X$ is a cofibration, so  
\begin{equation}
	\begin{tikzcd}
		U_{f}\times A\times I\cup U_{f}\times X\times\{0\} \arrow[r, "\varphi\cup id_{M}"] \arrow[d, hook] \arrow[r] & Diff(M) \\
		U_{f}\times X\times I \arrow[ru, "\tilde{\varphi}", dotted]                                                  &        
	\end{tikzcd}
\end{equation}
Here $id_M:U_{f}\times X\times\{0\}\longrightarrow Diff(M)$ is the constant map $(g,x,0)\longmapsto id_M$. Then $\varphi$ can be extend to a continuous map 
\begin{equation}
	\begin{array}{cccc}
		\tilde{\varphi}: & U_{f}\times X\times I & \longrightarrow & Diff(M)\\
		& (g,x,t) & \longmapsto & \tilde{\varphi}(g,x,t)
	\end{array}
\end{equation}
and finally we can let $\varphi_{g,x}:=\tilde{\varphi}(g,x,1)$.
\subsection{The last part of proof}
Now we can prove that $i^{*}:C(X,M)\longrightarrow C(A,M)$ is locally trivial.
Let $f\in C(A,M)$, and $U_f$ be its $\varepsilon$- neighborhood
\begin{equation}
	\begin{array}{cccc}
		F: & U_{f}\times(i^{*})^{-1}(f) & \longrightarrow & (i^{*})^{-1}(U_{f})\\
		& (g,\tilde{f}) & \longmapsto & x\mapsto\varphi_{g,x}(\tilde{f}(x))\\
		F^{-1}: & (i^{*})^{-1}(U_{f}) & \longrightarrow & U_{f}\times(i^{*})^{-1}(f)\\
		& \tilde{g} & \longmapsto & i^{*}(\tilde{g}),\ \ x\mapsto\varphi_{i^{*}(\tilde{g}),x}^{-1}(\tilde{g}(x))
	\end{array}
\end{equation}
Then we prove $F,F^{-1}$ is continuous. We need some lemma first.

\begin{lemma}\label{continuous 1}
	Let $\Phi:U_{f}\times X\longrightarrow C(M,M)$ be a continuous map, then the map
	\begin{equation}
		\begin{array}{cccc}
			\Phi': & U_{f} & \longrightarrow & C(X\times M,M)\\
			& g & \longmapsto & (x,m)\mapsto\Phi(g,x)(m)
		\end{array}
	\end{equation}
	is continuous.
\end{lemma}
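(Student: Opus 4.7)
The plan is to reduce this to the two exponential-law lemmas already established in the preliminaries, namely Lemma \ref{exponential law for any topo space} and Lemma \ref{exponential law for Y locally compact}. The map $\Phi'$ is just $\Phi$ with its variables re-curried, so no analytic content should be needed beyond these adjunctions for the compact-open topology.

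First, I would apply Lemma \ref{exponential law for Y locally compact} to the given map $\Phi: U_f \times X \to C(M,M)$, taking the role of the locally compact factor to be $M$ itself (which is locally compact since it is a manifold). This produces a continuous map
\begin{equation*}
\Phi^{\wedge}: U_f \times X \times M \longrightarrow M, \qquad (g,x,m) \longmapsto \Phi(g,x)(m).
\end{equation*}

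Next, I would invoke Lemma \ref{exponential law for any topo space} with the product decomposed as $U_f \times (X \times M)$, currying out the $U_f$-variable. This yields a continuous map
\begin{equation*}
(\Phi^{\wedge})^{\vee}: U_f \longrightarrow C(X \times M,\, M), \qquad g \longmapsto \bigl((x,m) \mapsto \Phi^{\wedge}(g,x,m)\bigr),
\end{equation*}
and unpacking the definitions shows $(\Phi^{\wedge})^{\vee}(g)(x,m) = \Phi(g,x)(m) = \Phi'(g)(x,m)$, so $\Phi' = (\Phi^{\wedge})^{\vee}$ is continuous.

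The only thing to watch is the bookkeeping of which factor is being curried at each step and verifying that the hypothesis of local compactness in Lemma \ref{exponential law for Y locally compact} is satisfied by the relevant factor; neither $U_f$ nor $X$ need be locally compact here, because in the first step we only need $M$ to be locally compact, and the second step uses the general version. There is no genuine obstacle; the argument is entirely formal adjunction-chasing once the preliminaries of Section 2 are in place.
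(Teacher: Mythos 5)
Your argument is correct and is essentially identical to the paper's proof: the paper likewise first uses Lemma \ref{exponential law for Y locally compact} (with the locally compact factor $M$) to get continuity of $\Phi^{\wedge}$ on $U_f\times X\times M$, and then Lemma \ref{exponential law for any topo space} to curry out the $U_f$-variable. Your extra remark that only $M$ needs to be locally compact is accurate and matches the roles the two lemmas play in the paper.
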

\begin{proof}
By lemma \ref{exponential law for Y locally compact}, we have $\Phi^{\wedge}:(g,x,m)\mapsto \Phi(g,x)(m)$ is continuous. So $\Phi'$ is continuous by lemma \ref{exponential law for any topo space}.

	\begin{lemma}\label{continuous 2}
		\begin{equation}
			\begin{array}{cccc}
				\Psi: & C(X,M)\times C(X\times M,M) & \longrightarrow & C(X,M)\\
				& (g,h) & \longmapsto & h\circ(id_{X}\times g)
			\end{array}
		\end{equation}
		is a continuous map.
	\end{lemma}
	\begin{proof}
		Suppose $\Psi (g_0,h_0)\in U^{K}:=\{f\in C(X,M)\mid f(K)\subseteq U\}\subseteq C(X,M)$.
		Let $L:=\Psi (g_0,h_0)(K)\subseteq \overline{L_{\varepsilon}}\subseteq U$, 
		where $L_{\varepsilon}:=\{m\in M|d_M(m,L)<\varepsilon\}$ and $\varepsilon<\frac{1}{2}d_M(L,M\setminus U)$.
		Let $(L_{\varepsilon})^{K}:=\{g\in C(X,M)\mid g(K)\subseteq L_{\varepsilon}\}$ and $U^{K\times\overline{L_{\varepsilon}}}:=\{h\in C(X\times M,M)\mid h(K\times\overline{L_{\varepsilon}})\subseteq U\}$, then for every $(g,h)\in (L_{\varepsilon})^{K}\times U^{K\times\overline{L_{\varepsilon}}}$, $\Psi (g,h)=h\circ(id_{X}\times g)\in U^{K}$, so $\Psi$ is continuous.
	\end{proof}
	Then we can prove 
	\begin{lemma}\label{continuous lemma}
		Let $\Phi:U_{f}\times X\longrightarrow C(M,M)$ be a continuous map, then the map
		\begin{equation}
			\begin{array}{cccc}
				\tilde{F}: & U_{f}\times C(X,M) & \longrightarrow & C(X,M)\\
				& (g,\tilde{f}) & \longmapsto & x\mapsto\Phi(g,x)(\tilde{f}(x))
			\end{array}
		\end{equation}
		is continuous.
	\end{lemma}
	\begin{proof}
		by lemma \ref{continuous 1} and \ref{continuous 2} the map 
		\begin{equation}
			\begin{array}{cccccc}
				\tilde{F}: & U_{f}\times C(X,M) & \overset{\Phi'\times Pr_2}{\longrightarrow} & C(X\times M,M)\times C(X,M) & \overset{\Psi}{\longrightarrow} & C(X,M)\\
				& (g,\tilde{f}) & \longmapsto & ((x,m)\mapsto\Phi(g,x)(m),\tilde{f}) & \longmapsto & x\mapsto\Phi(g,x)(\tilde{f}(x))
			\end{array}
		\end{equation}
		is continuous.
	\end{proof}
	Finally, let 
	\begin{equation}
		\begin{array}{cccc}
			\Phi: & U_{f}\times X & \longrightarrow & C(M,M)\\
			& (g,x) & \longmapsto & \varphi_{g,x}
		\end{array}
	\end{equation}
	and notice that $(i^{*})^{-1}(f)$ and $(i^{*})^{-1}(U_{f})$ are subspace of $C(X,M)$, we conclude that
	\begin{equation}
		\begin{array}{cccc}
			F: & U_{f}\times(i^{*})^{-1}(f) & \longrightarrow & (i^{*})^{-1}(U_{f})\\
			& (g,\tilde{f}) & \longmapsto & x\mapsto\varphi_{g,x}(\tilde{f}(x))\\
		\end{array}
	\end{equation}
	is continuous. And let $\Phi:(g,x)\longmapsto\varphi^{-1}_{g,x}$, we can know that 
	\begin{equation}
		\begin{array}{cccc}
			F^{-1}: & (i^{*})^{-1}(U_{f}) & \longrightarrow & U_{f}\times(i^{*})^{-1}(f)\\
			& \tilde{g} & \longmapsto & i^{*}(\tilde{g}),x\mapsto\varphi_{i^{*}(\tilde{g}),x}^{-1}(\tilde{g}(x))
		\end{array}
	\end{equation}
	is continuous because $i^*$ is continuous and 
	\begin{equation}
		\begin{array}{ccccc}
			(i^{*})^{-1}(U_{f}) & \longrightarrow & U_{f}\times(i^{*})^{-1}(U_{f}) & \longrightarrow & (i^{*})^{-1}(f)\\
			\tilde{g} & \longmapsto & (i^{*}(\tilde{g}),\tilde{g}) & \longmapsto & x\mapsto\varphi_{i^{*}(\tilde{g}),x}^{-1}(\tilde{g}(x))
		\end{array}
	\end{equation}
	is continuous.
\end{proof}

\subsection{Smooth version of Theorem \ref{A}}
In this chapter, we will prove the smooth version of Theorem \ref{A}.
That is, 
\begin{theorem}\label{B}
	Let $M$ be a manifold, $S,N$ be compact manifolds with boundary(or manifolds with corners), and $i:S\hookrightarrow N$ is an inclusion (a $C^k$ injection). Suppose there is an open neighborhood $U_S\subseteq N$ such that $S$ is an $C^k$ retract of $U_S$, which means that there is a $C^k$ map $\pi_{S}:U_S\longrightarrow S$ such that $\pi_{S}\circ i=id_S$. Then $i^{*}:C^{k}(N,M)\longrightarrow C^{k}(S,M)$ is a smooth fiber bundle on any connected component of $C^{k}(S,M)$. Here $k=0,1,2,\cdots,\infty$.	 
\end{theorem}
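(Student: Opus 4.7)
The strategy is to mimic the proof of Theorem \ref{A} in the smooth category, replacing the homotopy-extension step (which used the cofibration property) by an explicit extension via the retraction $\pi_S: U_S \to S$ damped by a smooth cutoff. The family $\varphi_{p,q}$ built in Section 4.1 already depends smoothly on $(p,q)$, so the main task is to check that the resulting trivialization is smooth as a map between the Banach/Fréchet manifolds of Theorem \ref{banach manifold consists of C^k maps}. Smoothness is to be checked via the convenient-calculus criterion furnished by Theorem \ref{exponential law on manifold}: it suffices that smooth curves are sent to smooth curves.

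\textbf{Construction.} Fix $f \in C^k(S, M)$ and let $U_f \subseteq C^k(S, M)$ be an open neighborhood small enough that $(f(s), g(s))$ lies in the domain $U = \{(p,q) \mid d(p,q) < \frac{1}{2} r_0(p)\}$ of $Y$ for all $g \in U_f$ and all $s \in S$; by compactness of $S$ this is already an open condition in the $C^0$ topology, hence a fortiori in the $C^k$ topology. Choose a smooth function $\rho: N \to [0,1]$ equal to $1$ on an open neighborhood $V$ of $S$ with $\overline{V} \subseteq U_S$ and vanishing outside $U_S$. For $(g, x) \in U_f \times N$ define the smooth compactly supported vector field on $M$
\begin{equation}
T_{g,x}(m) := \begin{cases}\rho(x)\, Y\bigl(f(\pi_S(x)),\, g(\pi_S(x))\bigr)(m), & x \in U_S,\\ 0, & x \notin U_S,\end{cases}
\end{equation}
where $Y$ is the smooth assignment from \eqref{definition of Y(p,q)}. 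Let $\varphi_{g,x,t}$ be its time-$t$ flow and set $\varphi_{g,x} := \varphi_{g,x,1}$. Since $\pi_S \circ i = \mathrm{id}_S$ and $\rho \equiv 1$ on $S$, we have $\varphi_{g,s}(f(s)) = g(s)$ for all $s \in S$, and $\varphi_{g,x} = \mathrm{id}_M$ wherever $\rho$ vanishes. The candidate trivialization and its inverse are
\begin{equation}
\begin{array}{cccc}
F: & U_f \times (i^*)^{-1}(f) & \longrightarrow & (i^*)^{-1}(U_f),\\
& (g, \tilde f) & \longmapsto & \bigl(x \mapsto \varphi_{g,x}(\tilde f(x))\bigr),
\end{array}
\end{equation}
\begin{equation}
\begin{array}{cccc}
F^{-1}: & (i^*)^{-1}(U_f) & \longrightarrow & U_f \times (i^*)^{-1}(f),\\
& \tilde g & \longmapsto & \bigl(i^*(\tilde g),\; x \mapsto \varphi_{i^*(\tilde g),x}^{-1}(\tilde g(x))\bigr).
\end{array}
\end{equation}

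\textbf{Smoothness check and main obstacle.} To verify that $F$ is $C^\infty$ it suffices, by Theorem \ref{exponential law on manifold}, to show that for smooth curves $\alpha: \mathbb{R} \to U_f$ and $\beta: \mathbb{R} \to (i^*)^{-1}(f)$, the map $(t, x) \mapsto \varphi_{\alpha(t),x}(\beta(t)(x))$ is $C^{(\infty, k)}$ on $\mathbb{R} \times N$. The wedges $\alpha^\wedge$ and $\beta^\wedge$ are $C^{(\infty, k)}$ by Theorem \ref{exponential law on manifold}; composing with the $C^k$ map $\pi_S$ and with the smooth maps $Y$ and $\rho$ produces a parameterized vector field $(t, x, m) \mapsto T_{\alpha(t),x}(m)$ of regularity $C^{(\infty, k, \infty)}$, compactly supported in $m$. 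Lemma \ref{infinite dimensional exponential map} then gives that the time-$1$ flow $(t, x, m) \mapsto \varphi_{\alpha(t),x}(m)$ is $C^{(\infty, k, \infty)}$, and composing with $\beta^\wedge$ yields the required regularity. The same reasoning applied to $-T$ handles $F^{-1}$, while smoothness of $i^*$ follows directly from Theorem \ref{exponential law on manifold}. The principal obstacle is applying the ODE-smoothness Lemma \ref{infinite dimensional exponential map} when the parameter space is the compact manifold-with-corners $N$ rather than a Euclidean open set: one covers $N$ by finitely many charts, applies Lemma \ref{infinite dimensional exponential map} locally, and glues using the uniqueness of ODE solutions together with the Lebesgue-number argument of Lemma \ref{lemma lebesgue number}, exactly in the style of Lemma \ref{Parallel transport of smooth vector fields} and the proof of Theorem \ref{banach manifold consists of C^k maps}.
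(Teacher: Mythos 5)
Your proposal is correct and follows essentially the same route as the paper: the same cutoff $\rho$ composed with $\pi_S$ to extend the vector field $Y(f(\pi_S(x)),g(\pi_S(x)))$ from $S$ to all of $N$, the same trivialization $F$ via the time-$1$ flow, and the same smoothness verification by pushing smooth curves through the exponential law (Theorem \ref{exponential law on manifold}) and the parameterized-flow regularity of Lemma \ref{infinite dimensional exponential map}. Your explicit remark about localizing Lemma \ref{infinite dimensional exponential map} over charts of the manifold-with-corners $N$ is a point the paper passes over silently, but it is handled exactly as you describe.
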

\begin{remark}
	Let $S,N$ be manifold(without boundary), $i:S\hookrightarrow N$ is an embedding, then $S$ has an tubular neighborhood $U_S$, and $\pi_S:U_S\longrightarrow S$ is just the projection. So the theorem holds when $i:S\hookrightarrow N$ is an embedding between manifolds.
\end{remark}
$\forall f\in C^k(S,M)$, Let $U_f$ be its $\varepsilon$-neighborhood as before, and $\varepsilon<\dfrac{1}{2}\underset{s\in S}{\inf}\{r_{0}(f(s))\}$.

Let $\rho_1:N\longrightarrow \mathbb{R}$ be a $C^\infty$ cut-off function, $\rho_1|_S=1,\ \rho_1|_{N\setminus U_S}=0$. Remember $Y(p,q)$ is a compact support vector field on $M$, such that the corresponding one-parameter diffeomorphism group $\{\varphi_{p,q,t}\}$ satisfies $\varphi_{p,q,1}(p)=q$.

Define 
\begin{equation}
	\begin{array}{cccc}
		X: & U_{f}\times N\times M & \longrightarrow & TM\\
		& (g,n,m) & \longrightarrow & \rho_{1}(n)Y(f(\pi_{S}(n)),g(\pi_{S}(n)))(m)
	\end{array}
\end{equation}
Here $Y(p,q)$ is defined in \ref{definition of Y_1(p,q)(m)} and \ref{definition of Y(p,q)}. Then $X_{g,n}\in \mathfrak{X}_c(M)$, and the one-parameter diffeomorphism group $\varphi_{g,n,t}$ satisfies $\varphi_{g,n,1}(f(s))=g(s),\ \forall s \in S$. So we can define a map:

\begin{equation}
	\begin{array}{cccc}
		F: & U_{f}\times(i^{*})^{-1}(f) & \longrightarrow & (i^{*})^{-1}(U_{f})\\
		& (g,\tilde{f}) & \longmapsto & n\mapsto\varphi_{g,n}(\tilde{f}(n))
	\end{array}
\end{equation}
\begin{equation}
	\begin{array}{cccc}
		F^{-1}: & (i^{*})^{-1}(U_{f}) & \longrightarrow & U_{f}\times(i^{*})^{-1}(f)\\
		& \tilde{g} & \longmapsto & i^{*}(\tilde{g}),n\mapsto\varphi_{i^{*}(\tilde{g}),n}^{-1}(\tilde{g}(n))
	\end{array}
\end{equation}
Now we prove $F,F^{-1}$ are $C^\infty$ maps, that is, $F,F^{-1}$ maps $C^\infty$ curves into $C^\infty$ curves.

For $F$, let $\alpha$ be a smooth curve:
\begin{equation}
	\begin{array}{cccc}
		\alpha: & \mathbb{R} & \longrightarrow & U_{f}\times(i^{*})^{-1}(f)\\
		& u & \longmapsto & \left(\alpha^{1}(u),\alpha^{2}(u)\right)
	\end{array}
\end{equation}
$\alpha^{1}(u)\in C^k(S,M),\ \alpha^{2}(u)\in C^k(N,M)$,then
\begin{equation}
	\begin{array}{cccc}
		(\alpha^{1})^{\wedge}: & \mathbb{R}\times S & \longrightarrow & M\\
		& (u,s) & \longmapsto & \alpha^{1}(u)(s)
	\end{array}\in C^{(\infty,k)}(\mathbb{R}\times S,M)
\end{equation}
\begin{equation}
	\begin{array}{cccc}
		(\alpha^{2})^{\wedge}: & \mathbb{R}\times N & \longrightarrow & M\\
		& (u,s) & \longmapsto & \alpha^{2}(u)(s)
	\end{array}\in C^{(\infty,k)}(\mathbb{R}\times N,M)
\end{equation}
and 
\begin{equation}
	\begin{array}{cccc}
		(F\circ\alpha)^{\wedge}: & \mathbb{R}\times N & \longrightarrow & M\\
		& (u,n) & \longmapsto & \varphi_{\alpha^{1}(u),n}(\alpha^{2}(u)(n))
	\end{array}
\end{equation}
Now we are going to prove $(F\circ\alpha)^{\wedge}\in C^{(\infty,k)}(\mathbb{R}\times N,M)$.

Define $X'(u,n,m):=X_{\alpha^1(u),n}(m)$, we have $X'$ is the composition of the following two maps:
\begin{equation}
	\begin{array}{cccccc}
		X': & \mathbb{R}\times N\times M & \overset{C^{(\infty,k,\infty)}}{\longrightarrow} & M\times M\times M \times N& \overset{c^{\infty}}{\longrightarrow} & TM\\
		& (u,n,m) & \longmapsto & (f(\pi_{S}(n)),(\alpha^{1})^{\wedge}(u,\pi_{S}(n)),m,n)\\
		&  &  & (p,q,m,n) & \longmapsto & \rho_{1}(n)Y(p,q)(m)
	\end{array}
\end{equation}
So we have $X'\in C^{(\infty,k,\infty)}(\mathbb{R}\times N\times M,TM)$. Then by lemma \ref{infinite dimensional exponential map}, $\varphi^{\wedge}(u,n,t,m):=\varphi_{\alpha^{1}(u),n,t}(m)\in C^{(\infty,k,\infty,\infty)}(\mathbb{R}\times N\times \mathbb{R}\times M,M)$.
And because
 $$(F\circ\alpha)^{\wedge}(u,n)=\varphi_{\alpha^{1}(u),n}(\alpha^{2}(u)(n))=\varphi^{\wedge}(u,n,1,\alpha^{2}(u)(n))$$
 so we have $(F\circ\alpha)^{\wedge}\in C^{(\infty,k)}(\mathbb{R}\times N,M)$.

For $F^{-1}$, let 
\begin{equation}
	\begin{array}{cccc}
		\beta: & \mathbb{R} & \longrightarrow & (i^{*})^{-1}(U_{f})\\
		& u & \longmapsto & \beta(u)
	\end{array}
\end{equation}
be a $C^\infty $ curve, then $\beta^\wedge\in C^{(\infty,k)}(\mathbb{R}\times N,M) $
We are going to prove $F^{-1}\circ\beta:u\longmapsto(i^{*}(\beta(u)),(n\mapsto\varphi_{i^{*}(\beta(u)),n}^{-1}(\beta(u)(n))))$ is a $C^\infty$ curve.

Because 
\begin{equation}
	\begin{array}{cccc}
		(i^{*}\circ\beta)^{\wedge}: & \mathbb{R}\times S & \longrightarrow & M\\
		& (u,s) & \longmapsto & \beta(u)(s)
	\end{array}\in C^{(\infty,k)}(\mathbb{R}\times S,M)
\end{equation}
So $i^*\circ\beta$ is a $C^\infty$ curve from $\mathbb{R}$ to $C^k(S,M)$.
Next we prove $(u,n)\longmapsto\varphi_{i^{*}(\beta(u)),n}^{-1}(\beta(u)(n))$ is a $C^{(\infty,k)}$ map.

Let $X''(u,n,m):=X_{i^*(\beta(u)),n}(m)$ then $X''$ is $C^{(\infty,k,\infty)}$ because:
\begin{equation}
	\begin{array}{cccccc}
		X'': & \mathbb{R}\times N\times M & \overset{C^{(\infty,k,\infty)}}{\longrightarrow} & M\times M\times M \times N& \overset{c^{\infty}}{\longrightarrow} & TM\\
		& (u,n,m) & \longmapsto & (f(\pi_{S}(n)),(i^*\circ\beta)^{\wedge}(u,\pi_{S}(n)),m,n)\\
		&  &  & (p,q,m,n) & \longmapsto & \rho_{1}(n)Y(p,q)(m)
	\end{array}
\end{equation}
For  $-X''\in C^{(\infty,k,\infty)}(\mathbb{R}\times N\times M,TM)$, let $\psi(u,n,t):=\varphi_{i^{*}(\beta(u)),n,t}^{-1}$ be the corresponding diffeomorphism groups, then we have $\psi^{\wedge}(u,n,t,m)\in C^{(\infty,k,\infty,\infty)}(\mathbb{R}\times N\times \mathbb{R}\times M,M)$by lemma \ref{infinite dimensional exponential map}.

So finally $(u,n)\longmapsto\psi^{\wedge}(u,n,1,\beta(u)(n))=\varphi_{i^{*}(\beta(u)),n}^{-1}(\beta(u)(n))$ is a $C^{(\infty,k)}$ map, and then $F^{-1}\circ\beta$ is a $C^\infty$ curve. So $F$ is a diffeomorphism, and $i^{*}:C^{k}(N,M)\longrightarrow C^{k}(S,M)$ is a smooth fiber bundle.

\begin{corollary}
	\begin{equation}
		\begin{array}{cccc}
			\pi: & C^{k}(I,M) & \longrightarrow & M\times M\\
			& \gamma & \longmapsto & (\gamma(0),\gamma(1))
		\end{array}
	\end{equation}
	is a smooth fibre bundle.
\end{corollary}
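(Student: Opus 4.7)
The plan is to deduce this corollary directly from Theorem~\ref{B} applied to the boundary inclusion $i: \{0,1\} \hookrightarrow I = [0,1]$, after identifying $C^{k}(\{0,1\}, M)$ with $M \times M$ as smooth manifolds.

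First I would verify the hypotheses of Theorem~\ref{B}. Here $S = \{0,1\}$ is a compact $0$-dimensional manifold (trivially a manifold with corners) and $N = I$ is a compact $1$-manifold with boundary, with $i$ being $C^{\infty}$. For the required $C^{k}$ retraction, I take $U_{S} = [0, 1/2) \cup (1/2, 1]$ and let $\pi_{S}(x) = 0$ for $x \in [0, 1/2)$, $\pi_{S}(x) = 1$ for $x \in (1/2, 1]$; this is locally constant, hence $C^{\infty}$, and satisfies $\pi_{S} \circ i = \mathrm{id}_{S}$.

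Next I would check that the evaluation map $\mathrm{ev}: C^{k}(\{0,1\}, M) \to M \times M$, $\gamma \mapsto (\gamma(0), \gamma(1))$, is a diffeomorphism of smooth Banach manifolds. Since $\{0,1\}$ is discrete, every set-theoretic map into $M$ is automatically $C^{k}$, and the manifold structure on $C^{k}(\{0,1\}, M)$ from Theorem~\ref{banach manifold consists of C^k maps} is built via charts modelled on $\Gamma^{k}_{\gamma}(\{0,1\}, TM) = T_{\gamma(0)}M \oplus T_{\gamma(1)}M$, which matches the tangent model of $M \times M$ at $(\gamma(0),\gamma(1))$. Under this identification the map in the statement is precisely $\pi = \mathrm{ev} \circ i^{*}$.

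Finally, Theorem~\ref{B} yields that $i^{*}$ is a smooth fibre bundle on each connected component of $C^{k}(\{0,1\}, M)$, which transports via $\mathrm{ev}$ to each connected component of $M \times M$. Since local triviality is a local property and every point of $M \times M$ lies in some component in which Theorem~\ref{B} supplies a local trivialization, $\pi$ is a smooth fibre bundle over all of $M \times M$. I do not anticipate a genuine obstacle here: the deep work is already in Theorem~\ref{B}; the only items to confirm are the mild hypothesis check (the locally constant retraction) and the diffeomorphism $C^{k}(\{0,1\}, M) \cong M \times M$.
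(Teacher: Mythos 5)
Your proposal is correct and is exactly the derivation the paper intends: the corollary is stated immediately after Theorem~\ref{B} with no written proof, and the intended argument is precisely to apply that theorem to $i:\{0,1\}\hookrightarrow I$ with a locally constant retraction from a neighborhood of the two endpoints, then identify $C^{k}(\{0,1\},M)$ with $M\times M$. Your explicit check of the retraction hypothesis and of the diffeomorphism $C^{k}(\{0,1\},M)\cong M\times M$ via the exponential charts supplies the details the paper leaves implicit.
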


\begin{corollary}\label{vector bundle R to M}
	\begin{equation}
		\begin{array}{cccc}
			\pi: & C^{k}(\mathbb{R},M) & \longrightarrow & M\\
			& \gamma & \longmapsto & \gamma(0)
		\end{array}
	\end{equation}
	is a vector bundle.
\end{corollary}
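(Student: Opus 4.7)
The plan is to combine the fibrewise homeomorphism $P'$ of Corollary~\ref{homeomorphism C(R,M) to C(R,TpM)} with a local frame of $TM$ to produce explicit vector bundle charts on $\pi$. The linear structure on each fibre $\pi^{-1}(q) = C^{k}_{q}(\mathbb{R},M)$ is to be transported from the Fr\'echet space $C^{k-1}(\mathbb{R}, T_q M)$ via $P'_q$, and local trivializations will come from local trivializations of $TM$.

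Concretely, fix $p_0 \in M$ and an open neighbourhood $U\subseteq M$ of $p_0$ on which $TM|_U$ admits a smooth frame $\{e_i(q)\}_{i=1}^{d}$, so $TM|_U\cong U\times\mathbb{R}^{d}$. I would define
\begin{equation*}
\Psi\colon \pi^{-1}(U)\longrightarrow U\times C^{k-1}(\mathbb{R},\mathbb{R}^{d}),\qquad \gamma\longmapsto \bigl(\gamma(0),\,(v^{1}_{\gamma},\ldots,v^{d}_{\gamma})\bigr),
\end{equation*}
where $P'_{\gamma(0)}(\gamma)(t)=\sum_{i}v^{i}_{\gamma}(t)\,e_{i}(\gamma(0))$. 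Fibre linearity is immediate by construction, since $\Psi|_{\pi^{-1}(q)}$ is the composition of the linear homeomorphism $P'_{q}$ with the frame isomorphism $C^{k-1}(\mathbb{R},T_{q}M)\cong C^{k-1}(\mathbb{R},\mathbb{R}^{d})$. The inverse $\Psi^{-1}$ is well defined: given $(q,v)\in U\times C^{k-1}(\mathbb{R},\mathbb{R}^{d})$, solve the initial value problem of Section~1 with base point $q$ and velocity profile $v^{i}(t)\,e_{i}(q)$; Proposition~\ref{The equation has the following 3 properties}(3) supplies a unique global solution $\gamma\colon\mathbb{R}\to M$. On an overlap $(U,\{e_i\})\cap(U',\{e'_{j}\})$ the transition map is induced pointwise by a smooth change of frame $g\colon U\cap U'\to GL(d,\mathbb{R})$ acting on $C^{k-1}(\mathbb{R},\mathbb{R}^{d})$, which is fibrewise linear.

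The substantive remaining point is joint continuity of $\Psi$ and $\Psi^{-1}$ when the base point $q=\gamma(0)$ is allowed to vary. My plan mirrors the proof of Corollary~\ref{homeomorphism C(R,M) to C(R,TpM)}: since the $C^{k}$ compact-open topology on $C^{k}(\mathbb{R},M)$ is the initial topology for the restrictions $i_{a}^{*}\colon C^{k}(\mathbb{R},M)\to C^{k}([-a,a],M)$, it is enough to check the analogous continuity of the finite interval versions $\Psi_{a}$ and $\Psi_{a}^{-1}$ for every $a>0$. For $\Psi_{a}$, I would feed the parametrised system (\ref{local equation}) with the base point $q$ appended as an extra parameter into Lemma~\ref{smoothness of solution to ode}, thereby reading off continuous dependence of both the parallel frame and of $\dot\gamma$ on $(\gamma,q)$ jointly. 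The continuity of $\Psi_{a}^{-1}$ is handled by the same parametric ODE argument applied to the inverse equation already used in Theorem~\ref{curve P^-1 is smooth}, now treating $(q,v)$ as joint data.

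The expected main obstacle is precisely this joint continuity across a moving base point, since the fibrewise statement of Corollary~\ref{homeomorphism C(R,M) to C(R,TpM)} keeps $p$ fixed and the exponential law Theorem~\ref{exponential law on manifold} is formulated for compact source manifolds. Reducing to the compact intervals $[-a,a]$ and then invoking the parametric version of Lemma~\ref{smoothness of solution to ode} is exactly what is required to overcome this. Once joint continuity is established, the charts $\Psi$ define a Fr\'echet vector bundle atlas on $\pi$, with linear transition functions as noted above, proving the corollary.
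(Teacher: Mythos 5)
Your route is genuinely different from the paper's. The paper does not trivialize $\pi$ by applying $P'$ at a moving base point; instead it reuses the family of diffeomorphisms $\varphi_{p,q}$ of $M$ built in Section 4 (flows of the compactly supported fields $Y(p,q)$), for which $(p,q,m)\mapsto\varphi_{p,q}(m)$ is smooth by Lemma \ref{infinite dimensional exponential map}, and sets $F(m,\gamma)=\varphi_{p,m}\circ\gamma$, $F^{-1}(\sigma)=(\sigma(0),\varphi_{p,\sigma(0)}^{-1}\circ\sigma)$. This reduces everything to composition with a smoothly varying diffeomorphism of the target, so the delicate point you single out --- joint continuity of the parallel-transport chart as $\gamma(0)$ varies --- never arises: the identification of the fibre with the Fr\'echet space $C^{k-1}(\mathbb{R},T_pM)$ is invoked only once, at the fixed base point $p$, via Corollary \ref{homeomorphism C(R,M) to C(R,TpM)}. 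What your construction buys in exchange is honest linearity: your transition functions are pointwise multiplication by a smooth change of frame $g\colon U\cap U'\to GL(d,\mathbb{R})$, hence fibrewise linear, whereas the paper's trivialization $\gamma\mapsto\varphi_{p,m}\circ\gamma$ is not linear with respect to the $P'$-induced linear structure (since $\varphi_{p,m}$ is not an affine map of $(M,\nabla)$, parallel transport along $\varphi_{p,m}\circ\gamma$ is not $(d\varphi_{p,m})$ applied to parallel transport along $\gamma$), so strictly speaking the paper's chart exhibits a fibre bundle with Fr\'echet-space fibre rather than a vector bundle atlas with linear transitions. Your version is therefore closer to what the word ``vector bundle'' literally demands.

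The price is that your key step is only sketched and is not covered by the results you cite. Corollary \ref{homeomorphism C(R,M) to C(R,TpM)} and Theorem \ref{curve P^-1 is smooth} are fibrewise statements with $p$ fixed, and Lemma \ref{smoothness of solution to ode} gives dependence on finitely many real parameters, not on the pair $(\gamma,q)$ ranging over a function space times $M$. To make ``append $q$ as an extra parameter'' rigorous in this paper's framework you would have to prove a base-point--parametrised analogue of Theorems \ref{curve P is smooth} and \ref{curve P^-1 is smooth}: namely that $\gamma\mapsto\bigl(\gamma(0),P'_{\gamma(0)}(\gamma)\bigr)$ and its inverse map smooth curves (equivalently, after restriction to $[-a,a]$, $C^{(\infty,k)}$ families) to smooth curves, with the initial data $\varphi_0(q)$ and $e_i^j(q)$ of system (\ref{local equation}) now depending smoothly on the curve parameter. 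This is plausible and the same ODE machinery should carry it, but it is a nontrivial extension rather than a citation, and it is precisely the work the paper's $\varphi_{p,m}$-based trivialization is designed to avoid. Either complete that parametrised argument or switch to the paper's mechanism for the topological trivialization and retain your frame computation only to exhibit the linear structure on the fibres.
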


\begin{proof}
Remember $\varphi_{p,q}$ is a diffeomorphism of $M$ that sends $p$ to $q$, and the map $(p,q,m)\longmapsto \varphi_{p,q}(m)$ is smooth by lemma \ref{infinite dimensional exponential map}. so we can define a homeomorphism for a open neighborhood $U_p$ of arbitrary $p\in M$:
\begin{equation}
\begin{array}{cccc}
F: & U_{p}\times C_{p}^{k}(\mathbb{R},M) & \longrightarrow & \pi^{-1}(U_{p})\\
 & (m,\gamma) & \longmapsto & \varphi_{p,m}\circ\gamma\\
F^{-1}: & \pi^{-1}(U_{p}) & \longrightarrow & U_{p}\times C_{p}^{k}(\mathbb{R},M)\\
 & \sigma & \longmapsto & (\sigma(0),\varphi_{p,\sigma(0)}^{-1}\circ\sigma)
\end{array}
\end{equation}
\end{proof}

\begin{corollary}
	$Imm_p^{k}(\mathbb{R},M)/Diff_+(\mathbb{R})$ is a vector bundle on $S^{n-1}$, where $Imm_p^{k}(\mathbb{R},M)$ consists of immersed curve $\gamma$ with $\gamma(0)=p$, and
	$Diff_+(\mathbb{R})$ consists of orientation preserving automorphism of $\mathbb{R}$ such that $f(0)=0$.
\end{corollary}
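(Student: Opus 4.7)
The plan is to identify $Imm_p^k(\mathbb{R},M)/Diff_+(\mathbb{R})$ with the curve space $C^{k-1}(\mathbb{R},S^{\dim M-1})$ on the unit sphere of $T_pM$ via the $P$-map followed by arclength reparametrization, and then invoke Corollary \ref{vector bundle R to M} with $S^{\dim M-1}$ in place of the target manifold.

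First, the homeomorphism $P:C_p^k(\mathbb{R},M)\to C^{k-1}(\mathbb{R},T_pM)$ from Corollary \ref{homeomorphism C(R,M) to C(R,TpM)} sends $Imm_p^k(\mathbb{R},M)$ bijectively onto the open subset $W\subset C^{k-1}(\mathbb{R},T_pM)$ of nowhere-vanishing curves, since parallel transport is a linear isomorphism and hence preserves the non-vanishing condition. The induced action of $Diff_+(\mathbb{R})$ on $W$ is then computed from the chain rule $\frac{d}{ds}(\gamma\circ\phi)(s)=\phi'(s)\dot\gamma(\phi(s))$ together with the fact that parallel transport along $\gamma\circ\phi$ from $s$ to $0$ agrees with parallel transport along $\gamma$ from $\phi(s)$ to $0$: this gives $(\phi\cdot v)(s)=\phi'(s)v(\phi(s))$.

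Next, I produce a canonical unit-speed representative in each orbit. For $v\in W$, the arclength function $L_v(t):=\int_0^t\|v(\tau)\|\,d\tau$ is strictly increasing; when $L_v$ is bi-infinite, $\phi:=L_v^{-1}$ lies in $Diff_+(\mathbb{R})$ and $\|(\phi\cdot v)(s)\|=\phi'(s)\|v(\phi(s))\|\equiv 1$. Uniqueness is automatic, since $v_2=\phi\cdot v_1$ with both $\|v_i\|\equiv 1$ forces $\phi'\equiv 1$ and then $\phi=\mathrm{id}$ from $\phi(0)=0$ (in particular, the action is free). This identifies $W/Diff_+(\mathbb{R})$ with $\{v\in C^{k-1}(\mathbb{R},T_pM):\|v\|\equiv 1\}=C^{k-1}(\mathbb{R},S^{\dim M-1})$, and the bundle projection $[\gamma]\mapsto\dot\gamma(0)/\|\dot\gamma(0)\|$ becomes the evaluation $v\mapsto v(0)$. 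Applying Corollary \ref{vector bundle R to M} to the (compact, hence complete) manifold $S^{\dim M-1}$ then yields the desired vector bundle structure.

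The hard part will be showing that the identification $W/Diff_+(\mathbb{R})\cong C^{k-1}(\mathbb{R},S^{\dim M-1})$ is a homeomorphism in the $C^{k-1}$ compact-open topology. The map $v\mapsto L_v^{-1}\cdot v$ inverts a nonlinear integral functional of $v$, and controlling its continuity (and that of its inverse) requires an ODE-parameter-dependence analysis of the same flavor as Lemma \ref{smoothness of solution to ode}. A secondary technical point is how to treat immersions of finite total arclength, for which $L_v^{-1}$ does not extend to a diffeomorphism of $\mathbb{R}$; one must either interpret the statement as referring to the open subset of bi-infinite-arclength immersions, or rebuild the trivialization using a smooth family of model reparametrizations whose ranges flexibly cover all cases.
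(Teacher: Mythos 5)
Your approach is essentially the paper's: pass to unit-speed representatives, use the homeomorphism $P$ of Corollary \ref{homeomorphism C(R,M) to C(R,TpM)} to identify them with curves in the unit sphere of $T_pM$, and conclude with Corollary \ref{vector bundle R to M}; the only cosmetic difference is that you reparametrize after applying $P$ (inside the linear space) while the paper reparametrizes in $M$ first. Two substantive remarks. First, the continuity step you defer as ``the hard part'' is settled in the paper not by ODE parameter-dependence but by an elementary composition lemma: it proves by induction on $k$, via the chain rule and Remark \ref{convergence of g(f_n)}, that $f_n\circ g_n\to f\circ g$ in $C^k$ whenever $f_n\to f$ and $g_n\to g$ in $C^k$, and applies this to $\gamma_n$ composed with the reparametrizations; that route is shorter than the ODE analysis you sketch and you should adopt it. Second, your worry about finite total arclength is not a technicality to wave away --- it is a genuine gap in the statement as the paper proves it. For an immersion such as $t\mapsto(\tanh t,0)$ in $\mathbb{R}^2$ the arclength function $L_v$ has bounded image, $L_v^{-1}\notin Diff_+(\mathbb{R})$, and the orbit contains no unit-speed curve defined on all of $\mathbb{R}$, so the paper's $\tilde r$ is not a bijection from the full quotient onto $S$. (Incidentally, the paper's explicit formula $r(\gamma)(t)=\gamma\bigl(\int_0^t\|\dot\gamma(s)\|^{-1}ds\bigr)$ does not even produce unit-speed output; the correct substitution is $L^{-1}$, exactly as in your version.) Your proposed remedy --- restricting to immersions with infinite forward and backward arclength, or rebuilding the normal form --- is precisely what is needed, and the paper does not supply it.
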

\begin{proof}
	Choose a complete Riemann metric $g$ on $M$.
	
	Let $S:=\{\gamma\in C_{p}^{k}(\mathbb{R},M)|\ ||\dot{\gamma}(t)||=1\}$
	and define 
	\begin{equation}
		\begin{array}{cccc}
			r: & Imm_{p}^{k}(\mathbb{R},M) & \longrightarrow & S\\
			& \gamma & \longmapsto & t\mapsto\gamma\left(\int_{0}^{t}\dfrac{1}{||\dot{\gamma}(s)||}ds\right)
		\end{array}
	\end{equation}
	Let $\{\gamma_n\}$ be a sequence converges to $\gamma$ in the $C^k$ compact open topology, which means that the derivatives of $\gamma_n$ converges uniformly on any compact set. We claim that for any $C^k$ maps $f_n,\ g_n$, if $f_n\stackrel{C^{k}}{\longrightarrow}f$ and $g_n\stackrel{C^{k}}{\longrightarrow}g$, then $f_n\circ g_n\stackrel{C^{k}}{\longrightarrow} f\circ g$.
	 By remark \ref*{convergence of g(f_n)}, we have $\dfrac{1}{||\dot{\gamma_n}(t)||} \stackrel{C^{k-1}}{\longrightarrow}\dfrac{1}{||\dot{\gamma}(t)||}$, and then $\int_{0}^{t}\dfrac{1}{||\dot{\gamma_n}(s)||}ds\stackrel{C^{k}}{\longrightarrow}\int_{0}^{t}\dfrac{1}{||\dot{\gamma}(s)||}ds$. So $r(\gamma_{n})\stackrel{C^{k}}{\longrightarrow} r(\gamma)$, and then $r$ is a continuous map. 
	 
	 Now we prove the claim by induction. By taking local coordinates, we can suppose all the maps are defined on open sets in Euclidean space. The case $k=0$ is trivial, suppose we have done for $k=m$, and let $k=m+1$. Then
	\begin{equation}
		\partial_{x_{i}}(f_{n}\circ g_{n})=\underset{j}{\sum}\dfrac{\partial g_{n}^{j}}{\partial x_{i}}(x)\dfrac{\partial f_{n}}{\partial y^{j}}(g_{n}(x))
	\end{equation}
	And by induction, we have $\dfrac{\partial f_{n}}{\partial y^{j}}\circ g_{n}\stackrel{C^{m}}{\longrightarrow}\dfrac{\partial f}{\partial y^{j}}\circ g$ and $\dfrac{\partial g_{n}^{j}}{\partial x_{i}}\stackrel{C^{m}}{\longrightarrow}\dfrac{\partial g^{j}}{\partial x_{i}}$.
	Hence $\partial_{x_{i}}(f_{n}\circ g_{n})\stackrel{C^{m}}{\longrightarrow}\partial_{x_{i}}(f\circ g)$, which completes the proof.
	
	For any equivalent class $[\gamma]\in Imm_p^{k}(\mathbb{R},M)/Diff_+(\mathbb{R})$, there is an unique element in it such that $||\dot{\gamma}(t)||=1,\ \forall t\in\mathbb{R}$. so we can define 
	\begin{equation}
		\begin{array}{cccc}
			\tilde{r}: & C_{p}^{k}(\mathbb{R},M)/Diff_{+}(\mathbb{R}) & \longrightarrow & S\\
			& [\gamma] & \longmapsto & \gamma
		\end{array}
	\end{equation}
	$\tilde{r}$ is continuous because $\tilde{r}\circ p=r$ is continuous and $p:Imm_p^{k}(\mathbb{R},M)\longrightarrow Imm_p^{k}(\mathbb{R},M)/Diff_+(\mathbb{R})$ is a quotient map. $\tilde{r}^{-1}$ is continuous because $\tilde{r}^{-1}=p\circ i$ where $i:S\hookrightarrow Imm_p^{k}(\mathbb{R},M)$.
	So $Imm_p^{k}(\mathbb{R},M)/Diff_+(\mathbb{R})$ is homeomorphic to $S$.
	Then by corollary \ref{homeomorphism C(R,M) to C(R,TpM)}, there is a  homeomorphism $P:C_{p}^{k}(\mathbb{R},M)\longrightarrow C^{k-1}(\mathbb{R},T_p M)$, so we know that $S$ is homeomorphic to $C^{k-1}(\mathbb{R},S^{dim M-1})$, which is a vector bundle on $S^{dim M-1}$ by corollary \ref{vector bundle R to M}.
\end{proof}

\section{Mapping space between compact topological spaces and manifolds} 
Now we prove the mapping space which consists of maps from a compact topological space to a manifold is a $C^\infty $ Banach manifold.
\begin{theorem}\label{C(A,M) is a banach manifold}
	Let $A$ be a compact topological space, $M$ be a $C^\infty$ manifold, then the mapping space $C(A,M)$ with its compact-open topology has a Banach manifold structure.
\end{theorem}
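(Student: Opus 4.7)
The plan is to mimic the construction of Theorem \ref{banach manifold consists of C^k maps}, replacing $C^{(k_{1},\cdots,k_{n})}$ throughout by continuous functions and the compact manifold with corners $N$ by the compact topological space $A$. The absence of a differential structure on $A$ is immaterial, because derivatives are only taken in the curve parameter and in the $M$ direction. For each $\gamma\in C(A,M)$ I would build a chart modelled on a Banach space $\Gamma_{\gamma}(A,TM)$ of continuous sections along $\gamma$, using the exponential map of a fixed complete Riemannian metric $g$ on $M$.

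Concretely, set
$$\Gamma_{\gamma}(A,TM):=\{s\in C(A,TM)\mid\pi_{M}\circ s=\gamma\},$$
endowed with the sup norm $\|s\|_{\gamma}:=\sup_{a\in A}|s(a)|_{g}$, which makes it a Banach space since $A$ is compact and $|\cdot|_{g}$ is jointly continuous on $TM$. Fix an open neighborhood $O$ of the zero section of $TM$ on which $\pi_{M}\times Exp$ is an embedding, and set
$$U_{\gamma}:=\{f\in C(A,M)\mid(\gamma(a),f(a))\in(\pi_{M}\times Exp)(O)\text{ for all }a\in A\},$$
which is open in the compact-open topology because $A$ is compact and $(\pi_{M}\times Exp)(O)$ is open in $M\times M$. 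The chart
$$\exp_{\gamma}^{-1}\colon U_{\gamma}\to\Gamma_{\gamma}(A,TM),\qquad f\longmapsto\bigl(a\mapsto Exp_{\gamma(a)}^{-1}f(a)\bigr)$$
has inverse $s\mapsto(a\mapsto Exp_{\gamma(a)}s(a))$, and both directions are continuous by Lemmas \ref{exponential law for any topo space} and \ref{exponential law for Y locally compact} applied to $Exp$ and $(\pi_{M}\times Exp)^{-1}$, exactly as in the first half of the proof of Theorem \ref{banach manifold consists of C^k maps}.

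The main obstacle is proving smoothness of the transition
$$\Phi_{12}\colon s\longmapsto\bigl(a\mapsto Exp_{\gamma_{2}(a)}^{-1}Exp_{\gamma_{1}(a)}s(a)\bigr)$$
between open subsets of $\Gamma_{\gamma_{1}}(A,TM)$ and $\Gamma_{\gamma_{2}}(A,TM)$. Following the convenient-calculus strategy of Theorem \ref{banach manifold consists of C^k maps}, it suffices to show that $\Phi_{12}$ sends $C^{\infty}$ curves to $C^{\infty}$ curves. For this one needs the continuous-domain analogue of Lemma \ref{curves to a bundle}: a curve $\alpha\in C^{\infty}(\mathbb{R},\Gamma_{\gamma}(A,TM))$ corresponds to a continuous map $\alpha^{\wedge}\colon\mathbb{R}\times A\to TM$ with $\pi_{M}\circ\alpha^{\wedge}(r,a)=\gamma(a)$ and every $\partial_{r}^{k}\alpha^{\wedge}$ jointly continuous on $\mathbb{R}\times A$. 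This is proved by the same difference-quotient/integral-remainder argument as Theorem \ref{general exponential law}, using the sup norm on $C(A,V)$ for Banach-valued targets $V$.

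Granted this exponential law, the smoothness of $\Phi_{12}$ follows as for the transition in Theorem \ref{banach manifold consists of C^k maps}. Writing $\Phi_{12}(s)(a)=\varphi_{\gamma_{1},\gamma_{2}}(s(a),a)$ for the $C^{\infty}$ fibrewise map
$$\varphi_{\gamma_{1},\gamma_{2}}\colon TM\times A\longrightarrow TM\times A,\qquad(v,a)\longmapsto\bigl(Exp_{\gamma_{2}(a)}^{-1}Exp_{\pi_{M}(v)}v,\ a\bigr),$$
one has $(\Phi_{12}\circ\alpha)^{\wedge}=\varphi_{\gamma_{1},\gamma_{2}}\circ(\alpha^{\wedge}\times id_{A})$, whose $\partial_{r}^{k}$-derivatives are jointly continuous whenever those of $\alpha^{\wedge}$ are, so $\Phi_{12}\circ\alpha$ is a smooth curve. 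This finishes the verification that $C(A,M)$ is a $C^{\infty}$ Banach manifold.
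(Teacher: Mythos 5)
Your proposal is correct and follows essentially the same route as the paper: exponential charts modelled on the Banach space of continuous sections $\Gamma_{\gamma}(A,TM)$, the exponential law identifying $C^{\infty}(\mathbb{R},\Gamma_{\gamma}(A,E))$ with $C^{(\infty,0)}(\mathbb{R}\times A,E)$ via the difference-quotient/integral-remainder argument, and smoothness of transitions checked on smooth curves using the fibrewise-smooth (``$C^{(\infty,0)}$'') map $\varphi_{\gamma_{1},\gamma_{2}}$. The only cosmetic difference is that the paper establishes that the charts are homeomorphisms by a local Lipschitz estimate in the sup metric rather than by the exponential laws, but both arguments work.
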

By lemma \ref{exponential law for any topo space} and lemma \ref{exponential law for Y locally compact}, we have:
\begin{lemma}\label{continuous exponential law}
	Let X,Y,Z be topological space and suppose that Y is locally compact. Then $f\in C(X,C(Y,Z))\Longleftrightarrow f^\wedge\in C(X\times Y,Z)$, where $f^\wedge :(x,y)\mapsto f(x)(y)$.
\end{lemma}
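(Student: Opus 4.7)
The plan is to observe that the statement is precisely the conjunction of the two exponential-law lemmas already proved in this section, applied in opposite directions, so the proof reduces to citing them. Concretely, I would read the biconditional as two implications and handle each by invoking exactly one of Lemma \ref{exponential law for any topo space} and Lemma \ref{exponential law for Y locally compact}.

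First I would note the notational setup: for any $f\colon X\to C(Y,Z)$, one has $(f)^\wedge\colon X\times Y\to Z$ defined by $(x,y)\mapsto f(x)(y)$, and conversely for $g\colon X\times Y\to Z$ one has $g^\lor\colon X\to Z^{Y}$ given by $x\mapsto(y\mapsto g(x,y))$; these are mutually inverse as set-theoretic operations, so $(f^\wedge)^\lor=f$ and $(g^\lor)^\wedge=g$. This formal fact is what glues the two previously established one-directional lemmas into the biconditional.

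For the implication $(\Rightarrow)$, assume $f\in C(X,C(Y,Z))$. Here the hypothesis that $Y$ is locally compact is used; Lemma \ref{exponential law for Y locally compact} then yields $f^\wedge\in C(X\times Y,Z)$ directly. For the implication $(\Leftarrow)$, suppose $f^\wedge\in C(X\times Y,Z)$. Set $g:=f^\wedge$; by Lemma \ref{exponential law for any topo space} (which requires no local compactness of $Y$), the map $g^\lor\in C(X,C(Y,Z))$. But $g^\lor=(f^\wedge)^\lor=f$, so $f\in C(X,C(Y,Z))$.

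There is essentially no obstacle here: each direction is a one-line citation of a lemma proved a few paragraphs earlier. The only thing to be careful about is making sure the $(\,\cdot\,)^\wedge$/$(\,\cdot\,)^\lor$ correspondence is stated unambiguously so that the reader sees the two previous lemmas as the two halves of the same biconditional; the local compactness of $Y$ is needed only for the $(\Rightarrow)$ direction, while the $(\Leftarrow)$ direction holds for arbitrary topological spaces $X,Y,Z$.
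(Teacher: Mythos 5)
Your proposal is correct and coincides with the paper's own argument: the paper introduces this lemma with the phrase ``By lemma \ref{exponential law for any topo space} and lemma \ref{exponential law for Y locally compact}, we have:'', i.e.\ it obtains the biconditional exactly as you do, using Lemma \ref{exponential law for Y locally compact} for the direction requiring local compactness of $Y$ and Lemma \ref{exponential law for any topo space} for the converse. Your explicit remark that $(f^\wedge)^\lor=f$ glues the two halves together is the only (routine) detail the paper leaves implicit.
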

\begin{lemma}\label{exponential law}
	Let A be a locally compact topological space, let $X$ be a topological space, let $\pi:E\longrightarrow X$ be a vector bundle (of finite rank), and let $\gamma:A\longrightarrow X$ be a continuous map. Let $\Gamma_\gamma(A,E)$ be the topological linear space of all vector fields along $\gamma$ (with its compact open topology). Then the following two statements are equivalent:
	
	(1)$\alpha\in C^{\infty}(\mathbb{R},\Gamma_\gamma(A,E))$
	
	(2)Let $\alpha^\wedge(r,a):=\alpha(r)(a)$, then $\alpha^\wedge\in C^{(\infty,0)}(\mathbb{R}\times A,E)\ and\ \pi \circ \alpha^\wedge(r,a)=\gamma(a)$
\end{lemma}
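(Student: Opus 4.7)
The plan is to reduce the statement to the already-established exponential law (Theorem \ref{general exponential law}, Remark \ref{remark about U_ij}, Lemma \ref{curves to a bundle}) by trivializing $E$. Using the data of Definition \ref{definition of the section space}, choose an atlas $\{(W_\sigma,\varphi_\sigma)\}$ of $X$ admitting local trivializations $\Psi_\sigma$ of $E$, and a refinement $\{U_\beta\}_{\beta\in B}$ of $\{\gamma^{-1}(W_\sigma)\}$ covering $A$ with $\gamma(U_\beta)\subseteq W_{\rho(\beta)}$. Since $A$ is locally compact, we can take each $U_\beta$ to be open and with locally compact closure inside $A$. The topology on $\Gamma_\gamma(A,E)$ is the initial topology with respect to the maps $i_\beta : s\mapsto \mathrm{Pr}_2\circ\Psi_{\rho(\beta)}\circ (s|_{U_\beta}) \in C(U_\beta,V)$, and the condition $\pi\circ\alpha^{\wedge}=\gamma'$ in (2) is exactly the condition that $\alpha^{\wedge}$ is a section along $\gamma'$, so it is equivalent to $\mathrm{Pr}_2\circ\Psi_{\rho(\beta)}\circ\alpha^{\wedge}|_{\mathbb{R}\times U_\beta}$ being well-defined and $V$-valued for each $\beta$.

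With this setup, both (1) and (2) split into a condition indexed by $\beta$. By the definition of the initial topology, (1) is equivalent to $i_\beta\circ\alpha\in C^{\infty}(\mathbb{R},C(U_\beta,V))$ for every $\beta$; I would verify this by the same induction on derivatives used in Remark \ref{remark about U_ij}, observing that the $i_\beta$ are continuous linear and that difference quotients in $\Gamma_\gamma(A,E)$ converge iff their images under each $i_\beta$ do. On the other hand, (2) is equivalent to $\mathrm{Pr}_2\circ\Psi_{\rho(\beta)}\circ\alpha^{\wedge}|_{\mathbb{R}\times U_\beta}\in C^{(\infty,0)}(\mathbb{R}\times U_\beta,V)$ for every $\beta$. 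Since a direct computation gives
\[
\mathrm{Pr}_2\circ\Psi_{\rho(\beta)}\circ\alpha^{\wedge}|_{\mathbb{R}\times U_\beta}(r,a) \;=\; (i_\beta\circ\alpha)^{\wedge}(r,a),
\]
the whole problem reduces, for each $\beta$ independently, to showing the equivalence
\[
\alpha_\beta\in C^{\infty}(\mathbb{R},C(U_\beta,V)) \iff \alpha_\beta^{\wedge}\in C^{(\infty,0)}(\mathbb{R}\times U_\beta,V).
\]

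For this reduced statement I would first handle the base case of continuity, which is exactly Lemma \ref{continuous exponential law} applied with $Y=U_\beta$ locally compact. For higher $r$-order, I would imitate the inductive argument in the proof of Theorem \ref{general exponential law} and Remark \ref{remark about U_ij}: establish the identity $(d\alpha_\beta/dr)^{\wedge} = \partial_r(\alpha_\beta^{\wedge})$, by showing that the difference quotients $(\alpha_\beta(r+h)-\alpha_\beta(r))/h$ converge in $C(U_\beta,V)$ by an integral representation $\int_0^1 \partial_r\alpha_\beta^{\wedge}(r+sh,a)\,ds$ (which is continuous in $(h,a)$ when $\alpha_\beta^{\wedge}\in C^{(\infty,0)}$), and conversely reading off $\partial_r\alpha_\beta^{\wedge}$ from the derivative curve in $C(U_\beta,V)$ via the continuous exponential law. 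Iterating this step by step gives $C^{\infty}$ in $r$.

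The main obstacle is that Theorem \ref{general exponential law} is stated only for $U$ an open subset of $[0,\infty)^{n-1-b}\times\mathbb{R}^b$ (or a cube), whereas here $U_\beta$ is merely an open subset of an arbitrary locally compact space. The rescue is precisely that only $C^{(\infty,0)}$-regularity is required in the $A$-direction: no partial derivatives in $a$ appear, so no smooth structure on $U_\beta$ is needed. Thus the base case survives purely topologically via Lemma \ref{continuous exponential law}, and the inductive transfer of $r$-differentiability uses only the topological vector space structure of $C(U_\beta,V)$ and pointwise evaluation; this is exactly the portion of the proof of Theorem \ref{general exponential law} that goes through verbatim.
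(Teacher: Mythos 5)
Your proposal is correct and follows essentially the same route as the paper: the base case is the purely topological exponential law (Lemma \ref{continuous exponential law}), and the inductive transfer of $r$-differentiability rests on the identity $\left(\dfrac{d\alpha}{dr}\right)^{\wedge}=\dfrac{\partial\alpha^{\wedge}}{\partial r}$, proved via the integral representation $\int_{0}^{1}\dfrac{\partial\alpha^{\wedge}}{\partial r}(r+sh,a)\,ds$ of the difference quotient. The only difference is presentational: you make the chart-by-chart trivialization of $E$ explicit before running this argument, whereas the paper works directly with the $E$-valued difference quotient as a curve in $\Gamma_{\gamma}(A,E)$ and leaves the local trivializations (needed to make sense of the fiberwise integral and of $C^{(\infty,0)}$ maps into $E$) implicit.
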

\begin{proof}
	We only need to prove $\dfrac{\partial\alpha^{\wedge}}{\partial r}(r,a)=\left(\dfrac{d\alpha}{dr}\right)^{\wedge}(r,a)$.
	If $\dfrac{d\alpha}{dr}$ exists, then 
	\begin{equation}
		\begin{array}{ccc}
			\dfrac{\partial\alpha^{\wedge}}{\partial r}(r,a) & = & \underset{h\rightarrow0}{\lim}\dfrac{\alpha(r+h)(a)-\alpha(r)(a)}{h}\\
			& = & \underset{h\rightarrow0}{\lim}\dfrac{\alpha(r+h)-\alpha(r)}{h}(a)\\
			& = & \dfrac{d\alpha}{dr}(r)(a)
		\end{array}
	\end{equation}
	conversely, if $\dfrac{\partial\alpha^{\wedge}}{\partial r}$ is exist and continuous, then we need to prove $\alpha:\mathbb{R}\longrightarrow\Gamma_\gamma(A,E)$ is a differentiable curve, and $\dfrac{\partial\alpha^{\wedge}}{\partial r}(r,a)=\left(\dfrac{d\alpha}{dr}\right)^{\wedge}(r,a)$.
	Or equivalently,
	\begin{equation}
		\beta:h\longmapsto\begin{cases}
			\dfrac{\alpha(r+h)-\alpha(r)}{h} & h\neq0\\
			a\mapsto\dfrac{\partial\alpha^{\wedge}}{\partial r}(r,a) & h=0
		\end{cases}
	\end{equation}
	is a continuous curve from $(-\varepsilon,\varepsilon)$ to $\Gamma_\gamma(A,E)$ for any $r\in\mathbb{R}$.
	By lemma \ref{continuous exponential law}, we need to prove
	\begin{equation}
		\beta^{\wedge}:\left(h,a\right)\longmapsto\begin{cases}
			\dfrac{\alpha^{\wedge}(r+h,a)-\alpha^{\wedge}(r,a)}{h} & h\neq0\\
			\dfrac{\partial\alpha^{\wedge}}{\partial r}(r,a) & h=0
		\end{cases}
	\end{equation}
	is continuous. But $\beta^{\wedge}(h,a)=\int_{0}^{1}\frac{\partial\alpha^{\wedge}}{\partial r}(r+sh,a)ds$, and $\frac{\partial\alpha^{\wedge}}{\partial r}(r,a)$ is continuous, hence $\beta$ is continuous.
\end{proof}	
\begin{definition}[partially smooth space]
	A m-dimensional partially smooth space is a topological space $M$ such that $\forall x\in M$, there exists $U_x\ni x$, and a homeomorphism $\varphi_x:U_x\longrightarrow V_x\times A_x$. Here $A_x$ is a topological space and $V_x$ is an open set in $\mathbb{R}^m$. Furthermore, for any two coordinate charts $(U_{x_1},\varphi_{x_1}),(U_{x_2},\varphi_{x_2})$, if $U_{x_1}\cap U_{x_2}\neq\phi$, then $\varphi_{x_{1}}\circ\varphi_{x_{2}}^{-1}:V_{x_{2}}\times A_{x_{2}}\longrightarrow V_{x_{1}}\times A_{x_{1}}$ satisfies: $\varphi_{x_{1}}\circ\varphi_{x_{2}}^{-1}(v,a)=(f^{1}(v,a),f^{2}(a))$. Here $f^2$ is a continuous map, and $f^1$ is a $C^{(\infty,0)}$ map.
\end{definition}
\begin{lemma}[composition of partially smooth maps]
	The composition of two  $C^{(\infty,0)}$ maps is a $C^{(\infty,0)}$ map.
\end{lemma}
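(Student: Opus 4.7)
The plan is to unpack the structure of a $C^{(\infty,0)}$ map and then verify the two components of the composition separately. Given $f:V_1\times A_1\to V_2\times A_2$ and $g:V_2\times A_2\to V_3\times A_3$, both $C^{(\infty,0)}$, by definition they split as
\begin{equation*}
f(v,a)=(f^1(v,a),f^2(a)),\qquad g(w,b)=(g^1(w,b),g^2(b)),
\end{equation*}
where $f^2,g^2$ are continuous and $f^1,g^1$ are $C^{(\infty,0)}$ (smooth in the Euclidean variable, jointly continuous in all variables, together with every partial derivative of arbitrary order in the smooth variable). The composition is then
\begin{equation*}
(g\circ f)(v,a)=\bigl(g^1\bigl(f^1(v,a),f^2(a)\bigr),\ g^2(f^2(a))\bigr),
\end{equation*}
so the second coordinate depends only on $a$ and is the composition of the two continuous maps $f^2$ and $g^2$. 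That coordinate is therefore continuous and independent of $v$, matching the required form.

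The remaining task is to show that $h(v,a):=g^1(f^1(v,a),f^2(a))$ is $C^{(\infty,0)}$. I would argue by induction on the order of differentiation in $v$. For the base case $h$ itself is jointly continuous, since $g^1$ is jointly continuous and $(v,a)\mapsto (f^1(v,a),f^2(a))$ is jointly continuous. For the inductive step, one applies the multivariate chain rule in the Euclidean variable alone: a partial derivative $\partial_v^\alpha h(v,a)$ is a finite sum, indexed by partitions of $\alpha$ in the usual Faà di Bruno fashion, of products of partial derivatives of $g^1$ in its first (smooth) slot evaluated at $(f^1(v,a),f^2(a))$ times partial derivatives of $f^1$ in $v$. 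Each factor is jointly continuous in $(v,a)$ by hypothesis, so the whole expression is jointly continuous, giving the required continuity of all $v$-derivatives.

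The proof is essentially routine once one writes out the chain rule carefully, so I do not expect a substantial obstacle. The only subtle point worth emphasising is that the chain rule only needs to be applied to the smooth variable $v$; the topological variable $a$ enters as a parameter both through $f^1(v,a)$ and through $b=f^2(a)$, and no differentiation in $a$ is involved. Thus joint continuity of the partial derivatives of $g^1$ (a consequence of $g^1$ being $C^{(\infty,0)}$) is exactly what is needed to combine with joint continuity of $\partial_v^\beta f^1$ and with continuity of $f^2$ to produce joint continuity of $\partial_v^\alpha h$. This establishes that $g\circ f$ has the required form $(h^1(v,a),h^2(a))$ with $h^1$ of class $C^{(\infty,0)}$ and $h^2=g^2\circ f^2$ continuous, completing the argument.
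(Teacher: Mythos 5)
Your proposal is correct and follows essentially the same route as the paper: split the composition into the topological coordinate $g^2\circ f^2$ (plainly continuous) and the Euclidean coordinate $g^1(f^1(v,a),f^2(a))$, then induct on the order of $v$-differentiation using the chain rule in the smooth variable only, with joint continuity of all factors giving joint continuity of each derivative. The only cosmetic difference is that you write out the full Fa\`a di Bruno expansion while the paper applies the inductive hypothesis directly to the first-derivative expression; the substance is identical.
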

\begin{proof}
	We prove by induction. Let 
	\begin{equation}
		\begin{array}{cccc}
			f: & U\times A & \longrightarrow & V\times B\\
			& (u,a) & \longmapsto & \left(f^{1}(u,a),f^{2}(a)\right)
		\end{array}
	\end{equation}
	and 
	\begin{equation}
		\begin{array}{cccc}
			g: & V\times B & \longrightarrow & W\times C\\
			& (v,b) & \longmapsto & \left(g^{1}(v,b),g^{2}(b)\right)
		\end{array}
	\end{equation}
	then $g\circ f$ is obviously $C^0$. We want to prove $g\circ f$ is $C^{(\infty,0)}$. 
	Suppose we have proved that $g\circ f$ is $C^{(k,0)}$. Then fix $a$ and use the chain rule, we have:
	\begin{equation}
		\dfrac{\partial (g\circ f)}{\partial u^{i}}=\dfrac{\partial g^{1}}{\partial v}\left(f^{1}(u,a),f^{2}(a)\right)\dfrac{\partial f^{1}}{\partial u^{i}}(u,a)
	\end{equation} 
	So by induction we know that the right hand side is $C^{(k,0)}$, which means that $g\circ f$ is $C^{(k+1,0)}$ and we are done. 
\end{proof}
Now we are going to prove theorem \ref{C(A,M) is a banach manifold}.
\begin{proof}
	Choose a complete Riemann metric $g$ on $M$.
	Let $O\subseteq TM$ be an open neighborhood of the zero section, such that the map $(\pi_M\times Exp)|_{O}$ is an embedding, where $Exp$ is the exponential map, and $\pi_M\times Exp:(m,v)\mapsto (m,Exp_m(v))\in M\times M$.
	Let $\gamma \in C(A,M)$ and $\Gamma_\gamma(A,O):=\{f:A\longrightarrow O|\pi_M\circ f=\gamma\}$ be a subset of $\Gamma_\gamma(A,TM)$ which consists of vector fields along $\gamma$.
	Then we can define 
	\begin{equation}
		\begin{array}{cccc}
			exp_{\gamma}: & \Gamma_{\gamma}(A,O)\subseteq\Gamma_{\gamma}(A,TM) & \longrightarrow & U_{\gamma}\subseteq C(A,M)\\
			& a\mapsto\beta(a) & \longmapsto & a\mapsto Exp_{\gamma(a)}\beta(a)
		\end{array}
	\end{equation}
	and its inverse 
	\begin{equation}
		\begin{array}{cccc}
			exp_{\gamma}^{-1}: & U_{\gamma}\subseteq C(A,M) & \longrightarrow & \Gamma_{\gamma}(A,O)\subseteq\Gamma_{\gamma}(A,TM)\\
			& a\mapsto f(a) & \longmapsto & a\mapsto Exp_{\gamma(a)}^{-1}f(a)
		\end{array}
	\end{equation}
	We need to prove $exp_{\gamma}$ is a homeomorphism. Because $(M,g)$ is a metric space and $A$ is compact, $C(A,M)$ has a metric $d_{C(A,M)}(f_1,f_2):=\underset{a\in A}{\sup}\{d_{M}\left(f_{1}(a),f_{2}(a)\right)\}$. Also, $\Gamma_{\gamma}(A,O)$ is an open set in a Banach space, so it has a metric. For $\beta\in \Gamma_{\gamma}(A,O)$, $Im\beta$ is compact in $TM$, so its $\varepsilon$-neighborhood $U_{Im\beta}$ is a bounded set, and $\overline{U_{Im\beta}}$ is compact. For $Exp$ is locally Lipschitz, $Exp|_{\overline{U_{Im\beta}}}$ is Lipschitz, with Lipschitz constant $L$. If $d_{\Gamma_{\gamma}(A,O)}(\beta',\beta)<\varepsilon$, then $\forall a\in A$, $(\gamma(a),\beta'(a))\in U_{Im\beta}$. Hence $d_{M}(Exp_{\gamma(a)}\beta(a),Exp_{\gamma(a)}\beta'(a))<L\varepsilon$, which means that $exp_{\gamma}$ is locally Lipschitz.

For $f\in U_\gamma\subseteq C(A,M)$, $(\gamma \times f )(A)\subseteq M\times M$ is compact. Let $U_{Im(\gamma\times f)}$ be its $\varepsilon$-neighborhood, then $\overline{U_{Im(\gamma\times f)}}$ is compact.
So $(\pi_M\times Exp)^{-1}$ is Lipschitz on $\overline{U_{Im(\gamma\times f)}}$, with Lipschitz constant $L'$. If $d_{C(A,M)}(f',f)<\varepsilon$, then $\forall a\in A$, $(\gamma(a),f'(a))\in U_{Im(\gamma\times f)}$. Hence $d_{TM}((\pi_M\times Exp)^{-1}(\gamma(a),f(a)),(\pi_M\times Exp)^{-1}(\gamma(a),f'(a)))<L'\varepsilon$, which means that $exp_{\gamma}^{-1}$ is locally Lipschitz. 
	
	Next we are going to prove that the transition function is $C^\infty$. The transition function is:
	\begin{equation}
		\begin{array}{cccc}
			\Phi_{\gamma_{1},\gamma_{2}}:=exp_{\gamma_{2}}^{-1}\circ exp_{\gamma_{1}}: & U\subseteq\Gamma_{\gamma}(A,TM) & \longrightarrow & \Gamma_{\gamma}(A,TM)\\
			& a\mapsto\beta(a) & \longmapsto & a\mapsto Exp_{\gamma_{2}(a)}^{-1}Exp_{\gamma_{1}(a)}\beta(a)
		\end{array}
	\end{equation}
	Because we have the following isomorphism of Banach spaces:
	\begin{equation}
		\begin{array}{cccc}
			iso: & \Gamma_{\gamma}(A,TM) & \simeq & \Gamma_{\gamma\times id_{A}}(A,TM\times A)\\
			& \beta & \longmapsto & \beta\times id_{A}
		\end{array}
	\end{equation}
	the transition function can be viewed as a map
	\begin{equation}
		\begin{array}{cccc}
			\Phi'_{\gamma_{1},\gamma_{2}}: & U'\subseteq\Gamma_{\gamma_{1}\times id_{A}}(A,TM\times A) & \longrightarrow & \Gamma_{\gamma_{2}\times id_{A}}(A,TM\times A)\\
			& a\mapsto\left(\beta(a),a\right) & \longmapsto & a\mapsto\left(Exp_{\gamma_{2}(a)}^{-1}Exp_{\gamma_{1}(a)}\beta(a),a\right)
		\end{array}
	\end{equation}
	Let 
	\begin{equation}
		\begin{array}{cccc}
			\varphi_{\gamma_{1},\gamma_{2}}: & U_{12}\subseteq TM\times A & \longrightarrow & TM\times A\\
			& (v,a) & \longmapsto & \left(Exp_{\gamma_{2}(a)}^{-1}Exp_{\gamma_{1}(a)}v,a\right)
		\end{array}
	\end{equation}
	then $\Phi'_{\gamma_{1},\gamma_{2}}(\beta)=\varphi_{\gamma_{1},\gamma_{2}}\circ \beta$, hence $\Phi'_{\gamma_{1},\gamma_{2}}=\left( \varphi_{\gamma_{1},\gamma_{2}}\right) _*$.
	And we have $\varphi_{\gamma_{1},\gamma_{2}}$ is the composition of the following three $C^{(\infty,0)}$ maps:
	\begin{equation}
		\begin{array}{ccccccc}
			TM\times A & \overset{Exp\times id_{A}}{\longrightarrow} & M\times A & \overset{\gamma_{2}\times id_{M}\times id_{A}}{\longrightarrow} & M\times M\times A & \overset{\left(\pi_{M}\times Exp\right)^{-1}\times id_{A}}{\longrightarrow} & TM\times A\\
			& \longmapsto & (m,a) & \longmapsto & (\gamma_{2}(a),m,a) & \longmapsto & \left(Exp_{\gamma_{2}(a)}^{-1}m,a\right)
		\end{array}
	\end{equation}
	So $\varphi_{\gamma_{1},\gamma_{2}}$ is a $C^{(\infty,0)}$ map.
	To prove the transition function is $C^\infty$, we need to show that it maps $C^\infty $ curves into $C^\infty$ curves.
	Let $\alpha\in C^{\infty}(\mathbb{R},\Gamma_{\gamma_1\times id_{A}}(A,TM\times A))$, then by lemma \ref{exponential law}, we have $\alpha^{\wedge}\in C^{(\infty,0)}(\mathbb{R}\times A,TM\times A)$.
	So $\varphi_{\gamma_{1},\gamma_{2}}\circ\alpha^\wedge$ is a $C^{(\infty,0)}$ map, and by $\left( \Phi'_{\gamma_{1},\gamma_{2}}\circ\alpha \right)^\wedge(r,a)=\varphi_{\gamma_{1},\gamma_{2}}\circ\alpha^\wedge(r,a)=\left(a,Exp_{\gamma_{2}(a)}^{-1}Exp_{\gamma_{1}(a)}\alpha(r)(a)\right)$, we know that $\left( \Phi'_{\gamma_{1},\gamma_{2}}\circ\alpha \right)^\wedge$ is $C^{(\infty,0)}$. Finally we get $\Phi'_{\gamma_{1},\gamma_{2}}(\alpha)\in C^{\infty}(\mathbb{R},\Gamma_{id_{A}\times\gamma_2}(A,TM\times A))$.
\end{proof}

\begin{theorem}
	Let $M$ be a complex manifold, and suppose there is a holomorphic connection on $M$, then for any compact topological space $A$, $C(A,M)$ is a complex Banach manifold.
\end{theorem}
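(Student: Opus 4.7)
The plan is to adapt the proof of Theorem \ref{C(A,M) is a banach manifold} to the complex-analytic setting, using the holomorphic exponential map induced by the given holomorphic connection in place of the Riemannian one. Because the connection is holomorphic, the geodesic spray on $TM$ is a holomorphic vector field, so its time-$1$ flow yields a holomorphic exponential map $Exp$ defined on an open neighborhood $O \subseteq TM$ of the zero section; shrinking $O$, we may assume $\pi_M \times Exp: O \to M \times M$ is a biholomorphism onto its image. For each $\gamma \in C(A,M)$, the pullback $\gamma^* TM$ is a continuous complex vector bundle over the compact space $A$, and so $\Gamma_\gamma(A, TM)$ inherits the structure of a complex Banach space. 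I would then define charts
\begin{equation*}
exp_\gamma: \Gamma_\gamma(A, O) \longrightarrow U_\gamma \subseteq C(A,M), \qquad \beta \longmapsto \bigl(a \mapsto Exp_{\gamma(a)} \beta(a)\bigr),
\end{equation*}
and prove $exp_\gamma$ is a homeomorphism by the same local Lipschitz argument as in the real case, which only uses that $Exp$ and $(\pi_M \times Exp)^{-1}$ are locally Lipschitz.

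The essential new step is to verify that the transition function $\Phi_{\gamma_1, \gamma_2} = exp_{\gamma_2}^{-1} \circ exp_{\gamma_1}$ is holomorphic between complex Banach manifolds. As in Theorem \ref{C(A,M) is a banach manifold}, this reduces via the isomorphism $\Gamma_\gamma(A, TM) \simeq \Gamma_{\gamma \times id_A}(A, TM \times A)$ to studying the pushforward of the fibrewise map $\varphi_{\gamma_1,\gamma_2}(v,a) = \bigl(Exp_{\gamma_2(a)}^{-1} Exp_{\gamma_1(a)} v,\, a\bigr)$. For each fixed $a$, this is a composition of holomorphic maps and hence holomorphic in $v$, while its dependence on $a$ is only continuous; so $\varphi_{\gamma_1,\gamma_2}$ is of mixed type, holomorphic in the fiber variable and continuous in the base. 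Adopting the natural definition that a map between complex Banach manifolds is holomorphic iff it sends holomorphic discs to holomorphic discs, I would verify transition holomorphicity by tracing what $\Phi_{\gamma_1,\gamma_2}$ does to such discs.

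The main obstacle is a holomorphic analog of Lemma \ref{exponential law}: a curve $\alpha: D \to \Gamma_{\gamma \times id_A}(A, TM \times A)$ from an open disc $D \subseteq \mathbb{C}$ is holomorphic in the Banach sense iff $\alpha^\wedge: D \times A \to TM \times A$ is continuous and holomorphic in the $D$-variable. One direction is immediate from evaluation continuity. For the converse, I would repeat the argument of Lemma \ref{exponential law}, using the integral identity
\begin{equation*}
\frac{\alpha^\wedge(z+h,a) - \alpha^\wedge(z,a)}{h} = \int_0^1 \partial_z \alpha^\wedge(z+sh, a)\, ds,
\end{equation*}
whose right-hand side is jointly continuous in $(h,a)$ and extends to $h=0$ via $\partial_z \alpha^\wedge(z,a)$, so the difference-quotient curve in $\Gamma_{\gamma \times id_A}(A, TM \times A)$ converges uniformly on $A$ to a continuous limit, yielding complex differentiability into the Banach space. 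Granting this lemma, if $\alpha$ is a holomorphic disc into the source chart, then $\alpha^\wedge$ is continuous and holomorphic in $z$; post-composing with $\varphi_{\gamma_1,\gamma_2}$ preserves both properties, giving $(\Phi_{\gamma_1,\gamma_2} \circ \alpha)^\wedge$ of the same type, and the lemma applied in reverse shows $\Phi_{\gamma_1,\gamma_2} \circ \alpha$ is again a holomorphic disc. The subtlety to watch is precisely that \emph{holomorphy} (not merely real smoothness) of a curve into an infinite-dimensional space must be deduced from pointwise complex differentiability together with the uniform continuity supplied by the integral representation above.
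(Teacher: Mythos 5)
Your proposal is correct and follows essentially the same route as the paper: reduce holomorphy of the transition function to its action on holomorphic discs, use the fibrewise-holomorphic map $\varphi_{\gamma_1,\gamma_2}$, and pass between a curve $\alpha$ into $\Gamma_\gamma(A,TM)$ and $\alpha^\wedge$ via an exponential-law argument with the difference-quotient integral representation. The only (minor) difference is that the paper piggybacks on the already-established real smoothness of the transition function and merely verifies the Cauchy--Riemann equation by exchanging evaluation at $a\in A$ with $\partial/\partial\overline{z}$, whereas you prove a self-contained holomorphic exponential law; both hinge on the same exchange of evaluation and differentiation.
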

\begin{proof}
	Only need to prove the transition function is holomorphic. Since we have a holomorphic connection, the exponential map is holomorphic.
	The transition function is:
	\begin{equation}
		\begin{array}{cccc}
			\Phi:=exp_{\gamma_{2}}^{-1}\circ exp_{\gamma_{1}}: & U\subseteq\Gamma_{\gamma}(A,T^{(1,0)}M) & \longrightarrow & \Gamma_{\gamma}(A,T^{(1,0)}M)\\
			& a\mapsto\beta(a) & \longmapsto & a\mapsto Exp_{\gamma_{2}(a)}^{-1}Exp_{\gamma_{1}(a)}\beta(a)
		\end{array}
	\end{equation}
	Let $\alpha:D\subseteq\mathbb{C}\longrightarrow \Gamma_{\gamma}(A,T^{(1,0)}M)$ be a holomorphic map, then by the Cauchy-Riemann equation, we have:
	\begin{equation}
		\dfrac{d}{d\overline{z}}\alpha(z)(a)=\dfrac{1}{2}\left(\dfrac{\partial\alpha}{\partial x}(x+iy)(a)+i\dfrac{\partial\alpha}{\partial y}(x+iy)(a)\right)=0
	\end{equation}
	so we know that $z\mapsto\alpha(z)(a)$ is holomorphic for any fixed $a\in A$.  We need to show that $\Phi\circ \alpha$ is holomorphic.
	Since
	\begin{equation}
		(\Phi\circ\alpha)^{\wedge}(z,a):=(\Phi\circ\alpha)(z)(a)=Exp_{\gamma_{2}(a)}^{-1}Exp_{\gamma_{1}(a)}\left(\alpha(z)(a)\right)
	\end{equation}
	is holomorphic for any fixed $a\in A$. So
	\begin{equation}
	\dfrac{1}{2}\left(\dfrac{\partial(\Phi\circ\alpha)^{\wedge}}{\partial x}(x+iy,a)+i\dfrac{\partial(\Phi\circ\alpha)^{\wedge}}{\partial y}(x+iy,a)\right)=0
	\end{equation}
	which means that
	\begin{equation}
		\dfrac{\partial(\Phi\circ\alpha)}{\partial\overline{z}}(z)(a)=\dfrac{1}{2}\left(\dfrac{\partial(\Phi\circ\alpha)}{\partial x}(x+iy)(a)+i\dfrac{\partial(\Phi\circ\alpha)}{\partial y}(x+iy)(a)\right)=0
	\end{equation}
	
	Hence $\Phi\circ\alpha$ is a holomorphic curve, which completes the proof.
\end{proof}

\end{document}